\documentclass{article}

\usepackage{arxiv}

\usepackage[utf8]{inputenc} 
\usepackage[T1]{fontenc}    
\usepackage{hyperref}       
\usepackage{url}            
\usepackage{booktabs}       
\usepackage{amsfonts}       
\usepackage{nicefrac}       
\usepackage{microtype}      
\usepackage{lipsum}		
\usepackage{graphicx}
\usepackage{natbib}
\usepackage{doi}

\usepackage{amsmath}
\usepackage{mathtools}
\usepackage{amsthm}

\usepackage{esint}
\usepackage{enumerate}
\usepackage{amssymb}
\usepackage{braket}
\usepackage{bm}
\usepackage{graphicx}
\usepackage{caption}
\usepackage{subcaption}
\usepackage{xcolor}
\usepackage{ulem}
\numberwithin{equation}{section}
 \usepackage{tikz}
\usepackage{natbib}
\usepackage{dsfont}
\usepackage{comment}
\usepackage{float}

\usepackage{thmtools, thm-restate}

\usepackage{algorithm}
\usepackage{algpseudocode}
\usepackage{algorithmicx}

\usepackage{hyperref}


\providecommand{\U}[1]{\protect\rule{.1in}{.1in}}
\newtheorem{theorem}{Theorem}[section]

\newtheorem{lemma}[theorem]{Lemma}

\newtheorem{proposition}[theorem]{Proposition}
\newtheorem{remark}[theorem]{Remark}

\newtheorem{assumption}[theorem]{Assumption}

\global\long\def\defeq{\stackrel{\textrm{def}}{=}}%

\newcommand{\dx}{\, \dd x}

\newcommand{\hd}{\hat{d}}
\newcommand{\hb}{\hat{b}}
\newcommand{\B}{\mathcal{B}}
\newcommand{\dy}{\, \dd y}

\newcommand{\vol}{\mathrm{vol}}

\newcommand{\T}{T}
\newcommand{\mo}{k}
\newcommand{\mS}{\mathcal{S}}
\newcommand{\prob}{
\mathrm{Prob}
}
\newcommand{\C}{\mathcal{C}}

\newcommand{\diameter}{\mathrm{Diameter}}

\newcommand{\dist}{\mathrm{dist}}

\newcommand{\OO}{\mathcal{O}}
\newcommand{\R}{\mathbb{R}}

\newcommand{\M}{\mathcal{M}}

\definecolor{mygreen}{rgb}{0.1,0.75,0.2}

\newcommand{\D}{\mathbf{D}}

\newcommand{\E}{\mathbb{E}}

\newcommand{\mbe}{\hat{b}}

\newcommand{\Yn}{Y_{1:n}}

\newcommand{\eps}{\varepsilon}
\newcommand{\wideeps}{\widetilde{\varepsilon}}
\newcommand{\widedelta}{\widetilde{\delta}}

\renewcommand{\P}{\mathbb{P}}

\newcommand{\chenghui}[1]{\textcolor{blue}{CL: #1}}

\newcommand{\dd}{\mathrm{d}}



\definecolor{darkred}{rgb}{0.6,0.1,0.1}
\definecolor{darkgreen}{rgb}{0.1,0.6,0.1}
\definecolor{darkblue}{rgb}{0.1,0.1,0.6}


\newcommand\smallO{
  \mathchoice
    {{\scriptstyle\mathcal{O}}}
    {{\scriptstyle\mathcal{O}}}
    {{\scriptscriptstyle\mathcal{O}}}
    {\scalebox{.7}{$\scriptscriptstyle\mathcal{O}$}}
  }

\newcommand{\jessi}[1]{{\color{cyan}[[Jessi: #1]]}}
\newcommand{\True}[1]{{\mathbf{#1}_{\text{True}}}}
\newcommand{\true}[1]{{{#1}_{\text{True}}}}

\usepackage{comment}

\title{A Divide-and-Conquer Approach to Persistent Homology}


\author{Chenghui Li\thanks{
    The authors gratefully acknowledge support from NSF under Grant Number DMS 2038556.  JCK also gratefully acknowledges support from NSF under Grant Number DMS 2337243.
    This research was performed using the computing resources and assistance of the UW-Madison Center For High Throughput Computing (CHTC) in the Department of Computer Sciences \citep{chtc}. The CHTC is supported by UW-Madison, the Advanced Computing Initiative, the
Wisconsin Alumni Research Foundation, the Wisconsin Institutes for Discovery, and the National Science Foundation, and is an active member of the Open Science Grid, which is supported by the National Science Foundation and the U.S. Department of Energy’s Office of Science.}\hspace{.2cm}
    and 
    Jessi Cisewski-Kehe\\
    Department of Statistics, University of Wisconsin-Madison}



\hypersetup{
pdftitle={A Divide-and-Conquer Approach to Persistent Homology},
pdfauthor={Chenghui Li, Jessi Cisewski-Kehe},
pdfkeywords={Big data, computational statistics, geometric data analysis, shape analysis, topological data analysis},
}

\begin{document}
\maketitle

\begin{abstract}
	Persistent homology is a tool of topological data analysis that has been used in a variety of settings to characterize different dimensional holes in data.  However, persistent homology computations can be memory intensive with a computational complexity that does not scale well as the data size becomes large. In this work, we propose a divide-and-conquer (DaC) method to mitigate these issues. The proposed algorithm efficiently finds small, medium, and large-scale holes by partitioning data into sub-regions and uses a Vietoris-Rips filtration. Furthermore, we provide theoretical results that quantify the bottleneck distance between DaC and the true persistence diagram and the recovery probability of holes in the data. We empirically verify that the rate coincides with our theoretical rate, and find that the memory and computational complexity of DaC outperforms an alternative method that relies on a clustering preprocessing step to reduce the memory and computational complexity of the persistent homology computations. Finally, we test our algorithm using spatial data of the locations of lakes in Wisconsin, where the classical persistent homology is computationally infeasible.
\end{abstract}

\keywords{Big data \and computational statistics\and geometric data analysis\and shape analysis\and  topological data analysis}

\section{Introduction}\label{sec:intro}
Topological data analysis (TDA) provides a framework for quantifying aspects of the shape of data.
Persistent homology is a popular TDA tool that computes a multi-scale summary of the different dimensional holes in a dataset.  
The summary information is contained in a \textit{persistence diagram}, which can be used for visualizing complex data, but also may be used directly or transformed in statistical or machine learning models for inference or prediction tasks \citep[e.g.][]{bubenik2015statistical, reininghaus2015stable,Anirudh2016,adams2017persistence,robinson2017hypothesis,kusano2017kernel,biscio2019accumulated,berry2020functional,hensel2021survey}.  
Many disciplines have benefited from these techniques, such as astronomy \citep{cisewski2014non, kimura2017quantification, Pranav:2017vy, Green:2019uz, Pranav:2019ws, Xu_2019, Cole:2020ul, wilding2021persistent, cisewski2022differentiating}, image analysis \citep{Qaiser, cisewski2023weighted, glenn2024confidence}, medicine and health \citep{bendich2014persistent, Lawson:2019tg}, material science \citep{kramar}, time series analysis \citep{khasawneh2016chatter}, among others.

A persistence diagram reveals the birth and death times of homology group generators, which we refer to as ``features'', in a particular filtration.  
 Zero-dimensional homology group generators ($H_0$ features) may be thought of as connected components or clusters, one-dimensional homology group generators ($H_1$ features) as closed loops, two-dimensional homology group generators ($H_2$ features) as voids in $\R^3$ like the interior of a soccer ball or balloon, and $p$-dimensional homology group generators ($H_p$ features) as $p$-dimensional holes.  
Some features can be extensive and traverse large portions of the data domain, while others can be small or medium-sized and confined to a localized region.  Depending on a particular application, all feature sizes may be scientifically informative. 

When pursuing a persistent homology analysis of a dataset, the computation of a persistence diagram is a necessary initial step; however, the computation of a diagram is computationally intensive and does not scale well with increasing data size. In the worst case, for a sample size of $n$, computation of a persistence diagram using the standard algorithm scales as $2^{3n}$ \citep{otter2017roadmap}.\footnote{Note that this scaling is based on the worst-case number of simplexes for a Vietoris-Rips complex (defined in Section~\ref{sec:background}) as $2^{n}-1$ and considering simplexes up to a dimension of $n-1$, along with the standard algorithm for computing a persistence diagram which scales as $(2^{n}-1)^3$ \citep{otter2017roadmap}.  However, when computing a persistence diagram, the dimension of simplexes is often limited based on the dimension of the data, resulting in fewer simplexes.}
This computational limitation is problematic because even moderately-sized datasets can be prohibitive (e.g., for 3D data, an $n$ of even just a few thousand can be problematic, \citealt{malott2021distributed}).  For example, in cosmology, where massive cosmological simulations of the large-scale structure of the Universe are a primary source of analysis, the simulated data
can have hundreds of millions to billions of simulation particles that may be converted into catalogs of hundreds of thousands or more dark matter halos and subhalos
(e.g., \citealt{springel2005cosmological, hellwing2016copernicus, libeskind2018tracing, liu2018massivenus,villaescusa2020quijote}).
Even if a single persistence diagram can be computed, inference may require bootstrapping and the computation of thousands of persistence diagrams \citep{Fasy_2014, Xu_2019, berry2020functional}.

New computational algorithms have been developed to mitigate these challenges (e.g., \citealt{Sheehy2012Linear, bauer2014distributed, otter2017roadmap, bauer2021ripser}). By defining a different filtration, \cite{Sheehy2012Linear} provides a theoretically linear computational complexity algorithm to \textit{approximate} the Vietroris-Rips (VR) filtration; see Section~\ref{sec:background} for an overview of the VR filtration. However, their algorithm only provides a constant factor difference guarantee between the estimated and true quantities on the persistence diagram (i.e., the birth and death times of the features). Our proposed method targets a more precise estimate of the birth and death times of the features with a $\delta$-approximation. \citealt{bauer2014distributed}, \citealt{otter2017roadmap} and \citealt{bauer2021ripser} consider other approaches to improve the efficiency. For example, \cite{bauer2014distributed} consider a distributed method in the persistent homology computation (i.e., the reduction of the boundary matrix) after the formation of the filtered simplicial complexes.
Other approaches focus on reducing the size of the data through the 
specification of landmarks or a pre-processing clustering step (e.g., \citealt{gamble2010exploring,moitra2018cluster,verma2021data}).
These approaches can smooth out or exclude potentially scientifically interesting small- or medium-scale homological features.

\cite{malott2019fast} takes a hybrid approach where large-scale features are detected using a pre-processed clustering dataset, and small-scale features are detected by partitioning the data.  This method is referred to as \textit{Partitioned Persistent Homology} (PPH) and has a distributed version presented in \cite{malott2021distributed}.
Two limitations of the PPH approach are that the large-scale features are approximated by a sub-sampling process that is affected by the density of the sample, and medium-scale features that traverse multiple partitions may be missed.  Additionally, there is no statistical evaluation of the approximation.  Our proposed method considers a similar data partitioning but is designed to improve on these limitations to identify small-, medium-, and large-scale features and provide probabilistic guarantees.
In particular, we propose a divide-and-conquer (DaC) method to estimate a persistence diagram for a large dataset.  Like PPH, the proposed DaC approach helps to relieve both the computational complexity and memory burden and is readily parallelized or distributed across multiple machines under this scheme. 
Unlike PPH, we develop estimators of features that may cross into multiple partitions and do not rely on clustering to uncover large-scale features.
While partitioning reduces the memory requirement, there is a cost associated with the accuracy of the resulting diagram.
We quantify the estimation error of the proposed DaC method and 
show that the true persistence diagram can be recovered with high probability.

The rest of this article is organized as follows. In Section~\ref{sec:background}, background on persistent homology is provided. In Section~\ref{sec:methodology}, the DaC method is introduced along with an illustrative example. We also establish a theoretical framework to study the recovery probability of DaC. In Section~\ref{sec:simulation}, the results of a simulation study to verify the theoretical results and compare DaC with a cluster-based algorithm are presented.  A real-data example using the locations of lakes in Wisconsin is presented in Section~\ref{sec:app}.  Discussion and concluding remarks are in Section~\ref{sec:conclusion}.



\section{Background} \label{sec:background}
The mathematical field of homology counts the number of different dimensional holes in a topological space, such as a manifold, which can be used to classify the space \citep{munkres2000topology,munkres2018elements}.  Persistent homology extends the notion of counting holes in a general topological space to counting holes in data. 
For point-cloud data, there are typically only $H_0$ features since the points are not connected.  
To investigate the persistent homology of such a dataset, we need an intermediate structure to quantify the connectedness of the points across a filtration. Several options exist, including different types of abstract simplicial complexes (e.g., alpha complexes, \v{C}ech complexes, Vietoris-Rips complexes). 
This work uses Vietoris-Rips (VR) complexes, though the proposed method may be generalized to other complexes.

In $\R^3$, a simplicial complex, $\mathcal S$, may be composed of a finite set of zero-simplexes (points), one-simplexes (segments), two-simplexes (triangles), and three-simplexes (tetrahedra); a geometric $k$-\textit{simplex} is the convex hull of $k+1$ affinely independent points. Denote the collection of $k$-simplexes as $\mS_k$ with $\mS=\cup_{k=0}^\infty \mS_k$, and the number of simplexes in $\mS_k$ as $n_k$. If $\sigma_1$ and $\sigma_2$ are simplexes such that $\sigma_1 \subset \sigma_2$, then $\sigma_1$ is called a \textit{face} of $\sigma_2$.
If for each simplex $\sigma \in \mathcal S$, every face of $\sigma$ is also in $\mathcal S$, and if $\sigma_1, \sigma_2 \in \mathcal S$ is such that either $\sigma_1 \cap \sigma_2 \in \mathcal S$ or $\sigma_1 \cap \sigma_2 = \emptyset$, then $\mathcal S$ is a {\it simplicial complex}.

Given observations $\Yn = \{y_1, y_2, \ldots, y_n\}\subset \R^D$, to compute the homology of $\Yn$, a VR complex may be constructed by picking a non-negative distance scale $t \in \R_{\geq 0}$, with $\Yn$ as the vertex set (zero-simplexes).  Any two points $y_i, y_j \in \Yn$ with Euclidean distance less than or equal to $t$, $\rho(y_i,y_j)\leq t$, are connected by a segment (one-simplex). 
Similarly, triplets of points with pairwise distances less than or equal to $t$ form a triangle (two-simplex), and quadruples of points with pairwise distances less than or equal to $t$ form a tetrahedron (three-simplex).  
In other words, $\Yn$ are the center of balls with diameter $t$, and balls that intersect can form segments, triangles, and tetrahedrons, etc.
The VR complex at scale $t$ for vertex set $\Yn$ can be defined as \citep{edelsbrunner2010computational}
\begin{equation}
\text{VR}_t(\Yn) = \{\sigma \subseteq \Yn \mid \rho(u,v) \leq t, \forall u, v \in \sigma\}.  \label{eq:vr}
\end{equation}
The set of $k$-simplexes of $\text{VR}_t(\Yn)$ is denoted as $\text{VR}_t^k(\Yn)$, with $\text{VR}_t(\Yn)=\cup_{k=0}^\infty \text{VR}^k_t(\Yn)$.  Illustrations of these concepts are displayed in Figures~\ref{fig:circle1} and \ref{fig:circle2} for 2D data sampled on four circles.  The data are the black points with cyan balls of radius 0.4 and 0.7, respectively; the one-simplexes are black segments and the two-simplexes are the yellow triangles.
	\begin{figure}[htbp]
		\par\medskip
		\centering
		\begin{subfigure}[t]{0.31\textwidth}
			\centering
    	        \includegraphics[width=\textwidth]{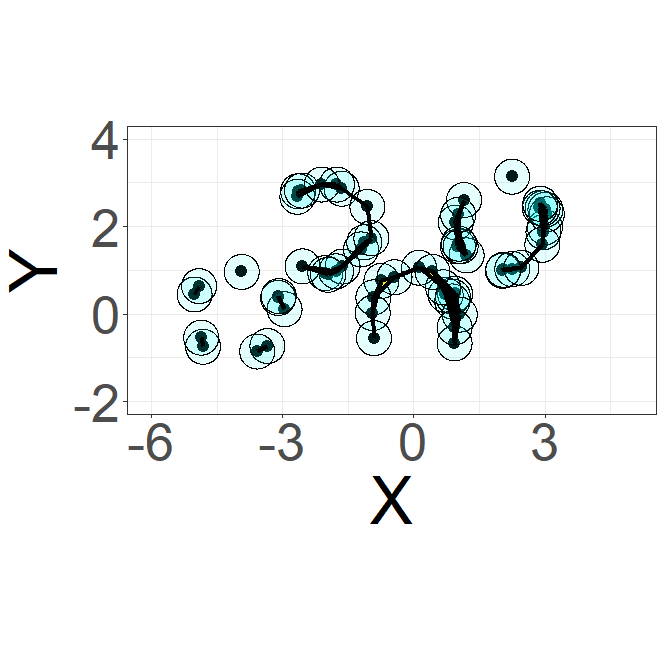}
			\caption{$t=0.8$}
			\label{fig:circle1}
			\end{subfigure}
		\begin{subfigure}[t]{0.31\textwidth}
			\centering
			\includegraphics[width=\textwidth]{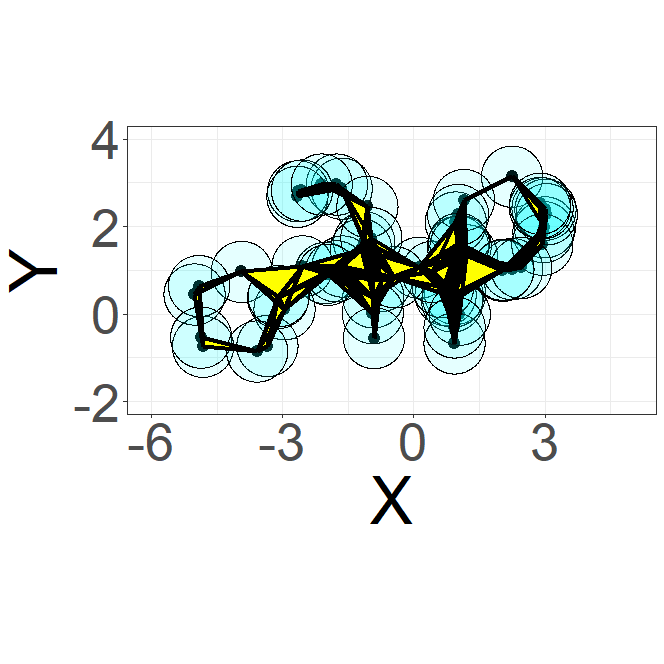}
			\caption{$t=1.4$}
			\label{fig:circle2}
		\end{subfigure}
		\begin{subfigure}[t]{0.31\textwidth}
			\centering
			\includegraphics[width=\textwidth]{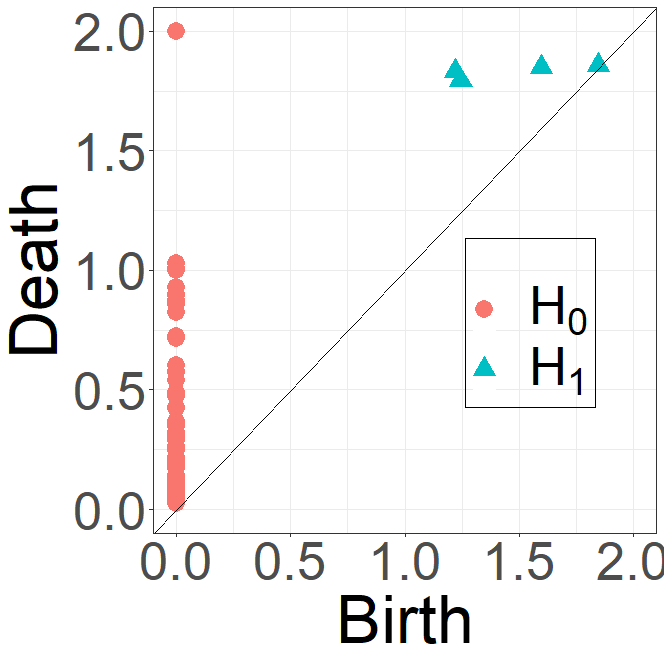}
			\caption{Persistence Diagram}
			\label{fig:circle_diagram}
		\end{subfigure}
		\caption{Illustration of VR complexes and the persistent diagram of 60 data points sampled from four circles with noise. The data in (a) and (b) are displayed as black points with cyan balls with radiuses of $0.8/2$ and $1.4/2$, respectively, along with one-simplexes (black segments) and two-simplexes (yellow triangles). The corresponding persistence diagram (c) has four $H_1$ features (blue triangles) and 60 $H_0$ features (red discs).}
	\end{figure}

Persistent homology considers a range of values for $t$, and constructs a VR filtration such that for $t_1 < t_2$, $\text{VR}_{t_1}(\Yn) \subseteq \text{VR}_{t_2}(\Yn)$.  
The filtration is controlled by the distance $t$, which starts at 0 and increases to $\infty$, at which the corresponding homology of $\text{VR}_t$ complex is computed.  
At $t=0$, the homology is computed over the vertex set and generally has only $H_0$ features (i.e., no higher dimensional holes).  
As $t$ increases, $H_0$ features merge, and higher dimensional holes may appear or disappear.  
The time $t$ when a feature appears and disappears from the filtration is called its \textit{birth} and \textit{death} time, respectively.
The \textit{persistence} or the lifetime of a feature is the difference between its death and birth times.
For example, when $t$ increases from $0.8$ in Figure \ref{fig:circle1} to $1.4$ in Figure \ref{fig:circle2}, connected components merge, and loops form. 
This information is summarized in the \textit{persistence diagram} displayed in Figure~\ref{fig:circle_diagram}.  
A persistence diagram, $\D$, can be defined as 
\begin{equation}
    \D = \{(r_j, b_j, d_j): j = 1, \ldots, |\D|\} \cup \Delta,
\end{equation}
where $r_j$ represents the homology group dimension of feature $j$ with birth time $b_j$ and death time $d_j$, the $|\D|$ gives the total number of homology group generators detected with $d_j > b_j$, and $\Delta$ signifies features with zero persistence (birth time = death time) which are considered to have infinite multiplicity \citep{mileyko2011probability}.  

It is common to interpret features on a persistence diagram with higher persistence (i.e., further from the diagonal) as holes that are real topological signals, while those with low persistence (i.e., close to the diagonal) as topological noise \citep{Fasy_2014} or revealing some geometric curvature information \citep{bubenik2020persistent,turkes2022onPH}.   
For a more detailed introduction to persistent homology, see \cite{edelsbrunner2008persistent,otter2017roadmap}. Unless otherwise noted, persistence diagram computations presented in this paper use Dionysus \citep{morozov2007dionysus} via the {\tt R} package {\tt TDA} \citep{fasy2021package}.

\subsection{Persistent Homology Algorithm}
Different dimensional holes are detected in a VR filtration using notions from algebraic topology.  A boundary matrix is used to capture the information about the simplexes that form across the filtration.  Let $B_k$ represent the $n_{k-1} \times n_k$ boundary matrix of the $k$-simplexes that form in the filtration, where the rows represent the $k-1$-simplexes that form the boundaries of the $k$-simplexes:  $B_k(i,j)$ is 1 if $k-1$-simplex $i$ is a boundary of $k$-simplex $j$, otherwise it is 0.  
The \textit{Standard Algorithm} (SA) for computing a persistence diagram reduces the boundary matrices in a particular way so that the reduced matrices, $R_k$, reveal the birth and death times of the different dimensional features \citep{edelsbrunner2000topological,zomorodian2005computing}.  The $H_k$ features are the non-zero columns of $R_k$, which can be used as (non-unique) \textit{representations} of those features.  
For example, if column $j$ of $R_1$ has ones in rows $i_1, i_2, i_3, i_4$, then these zero-simplexes may be used to represent $H_1$ feature $j$.

In the worst case, the SA has cubic complexity in the number of simplexes. This bound is sharp \citep{morozov2005persistence}, but when the boundary matrix is sparse, the complexity is less than cubic. There are other algorithms for the reduction of the boundary matrix introduced in \cite{milosavljevic2011zigzag} taking $\OO(n_S^{2.376})$, where $n_S$ is the total number of simplexes. 
We use the SA in this paper without any speed-up in the reduction step. 
More details of these computations are beyond the scope of the proposed work, but an accessible presentation is available in \cite{otter2017roadmap}.

Though persistent homology is useful for summarizing topological information, common approaches like the VR filtration suffer both in computational efficiency and the required memory. Constructing $k$-simplices ($\mathcal{S}_k$) takes $O(n^{k+1})$ operations for $n$ points because every $(k+1)$-subset is taken into account to construct the $k$-simplex; this is a substantial burden when considering large datasets (such as cosmological simulations which can have millions or billions of data points). An algorithm is needed to simplify the boundary matrix, which suffers from memory and computational issues.
Unfortunately, the VR filtration computation is challenging to parallelize or distribute across multiple machines because the global structure of the data is needed to determine when every feature is born or dies.

\subsection{Approximations of Persistent Homology}
Various methods have been proposed to address the computational burden of computing a persistence diagram. 
A straightforward framework considers reducing the sample size through a subsampling method to achieve sparse representation. This method takes $k_c$ data points from the initial point cloud and runs the persistent homology on them, where $k_c \ll n$. For example, \cite{chazal15} considers subsampling and merging average persistence landscapes (which are functional summaries of persistence diagrams, \citealt{bubenik2015statistical}) as a functional summary of a partitioned persistence diagram. \cite{moitra2018cluster} utilizes k-means++ to speed up the computation, but this can lead to missing small homological features. \cite{malott2019fast} proposes using k-means++ with an added zoom-in step to recover small features. Overall, the subsampling methods can help reduce computational complexity and memory while preserving birth and death time estimates. However, the success of these methods heavily relies on the quality of the distance-based clustering, which makes it difficult to consider a rigorous theoretical analysis. Choosing the right number of reduced data points $k_c$ can also be tricky. In general, cluster-based algorithms lack the ability to detect local topological information. For instance, clustering techniques require an approximately equal density of observations on each feature to recover all the features in the data space, whereas TDA methods are generally not bound by this.  The k-means++ approach of \cite{moitra2018cluster} is empirically compared to the proposed DaC method in Section~\ref{sec:simulation}.



\section{Methodology}\label{sec:methodology}

To mitigate the memory and computational burden when computing persistence diagrams using VR filtration, we propose a DaC method. The proposed DaC approach begins by dividing the data into sub-regions with artificial boundaries added to complete holes that may traverse into other sub-regions. A persistence diagram is computed for each sub-region, and these persistence diagrams are merged to estimate a persistence diagram for the full data volume. The challenge is to recover holes that span more than one sub-region; see further discussion in Section~\ref{sec:general framework}. 


The estimated persistence diagram using the proposed DaC method can recover the features of the \textit{true diagram}, which we define as the persistence diagram computed using the entire ``non-partitioned'' dataset. In particular, with high probability, the DaC method can detect the features from the true diagram, with statistical guarantees on the proximity of the estimated birth and death times to their true values. 

In the following subsections, we begin with a simple example to illustrate the main concepts of the proposed DaC algorithm, followed by an overview of the general framework and main theoretical results.  The general technical results and proofs are available in the Appendix. 

\subsection{Illustrative example of the DaC method}
The proposed DaC algorithm is introduced in this section considering a simple 2D dataset sampled on one large loop and two smaller loops with noise as displayed in Figure \ref{fig:ExampleData}.  The goal is to recover the three loops on the DaC-estimated persistence diagram, which can be compared to the true persistence diagram that uses all the data points.  Since the dataset is small, the true persistence diagram of the $H_1$ features can be computed; representative loops of the true diagram are displayed in Figure~\ref{fig:ExampleFullLoops}.

To illustrate the DaC method, we divide the data domain into two sub-regions splitting through the middle of the x-axis, as displayed in Figure~\ref{fig:ExampleLoops} by the vertical green dashed line. If we simply compute the VR filtration on the sub-data, then the loops away from the boundary can be easily recovered. However, the largest circle across the shared boundary of the sub-regions may not be recovered without additional steps.  To recover the split  feature, a \textit{supplemental boundary} is added along the vertical green dashed line.  This supplemental boundary behaves like additional data in the VR filtration, where distances are computed between it and the original data points. 

Representations of the loops recovered from computing a VR filtration on the data from two sub-regions are displayed in different colors in  Figure~\ref{fig:ExampleLoops}. The next step is to merge any features that cross the supplemental boundary; we call a sub-feature in a sub-region that forms with the supplemental boundary a \textit{potential-feature}, and a feature fully contained in a sub-region a \textit{complete-feature}.
A \textit{topological projection} (i.e., the portion of boundary with representative data points forming a feature) onto the supplemental boundary can be used to merge potential-features. If potential-features in different sub-regions share the same projection, they may be divided from one larger feature. If there are more than two sub-regions, a similar method can be used to merge them, or a distance matrix can be constructed among all of the potential-features to be used in a VR filtration to identify which potential-features can be merged; see the discussion in Section~\ref{sec:method:combination method} for details. As shown in the persistence diagram displayed in Figure \ref{fig:ExamplePD}, the merged feature's birth and death times (green triangle) match the true feature's birth and death times (blue square).
	\begin{figure}[htbp]
		\par\medskip
		\centering
		\begin{subfigure}[t]{0.45\textwidth}
			\centering    	        \includegraphics[width=5.5cm]{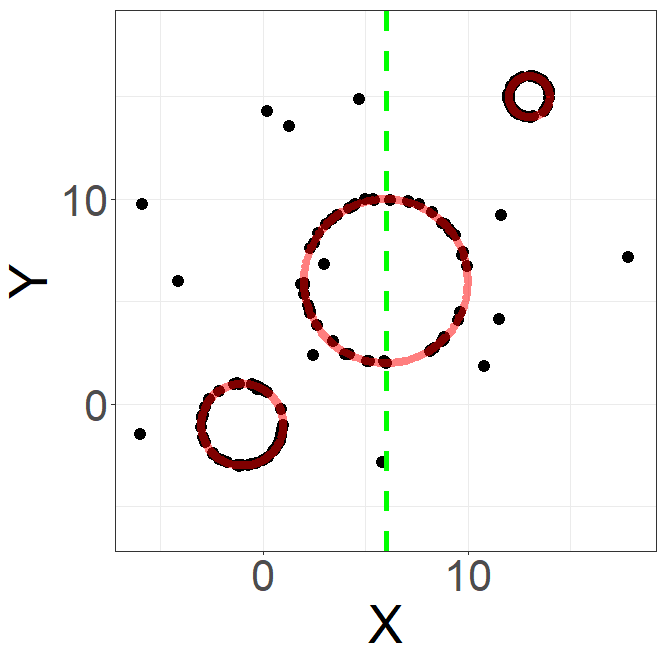}
			\caption{Data}
			\label{fig:ExampleData}
		\end{subfigure}
		\begin{subfigure}[t]{0.45\textwidth}
			\centering
			\includegraphics[width=5.5cm]{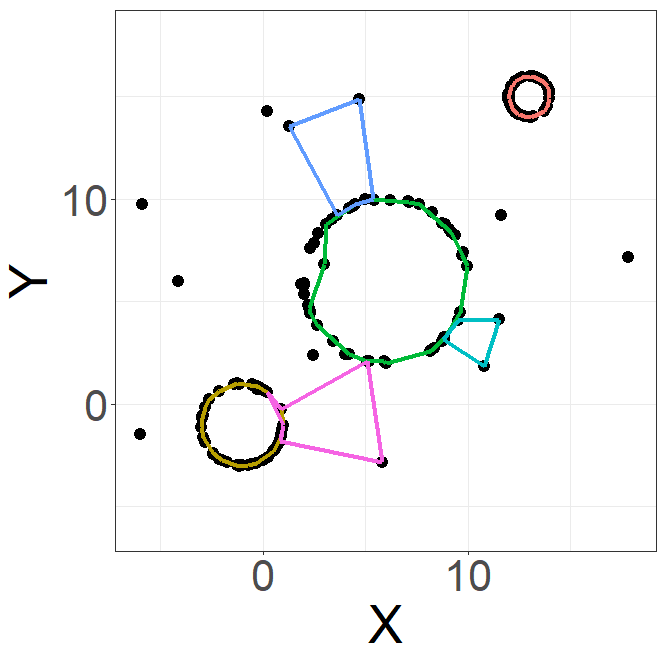}
			\caption{$H_1$ features recovered from full data}
			\label{fig:ExampleFullLoops}
		\end{subfigure}
		\begin{subfigure}[t]{0.45\textwidth}
			\centering
			\includegraphics[width=5.5cm]{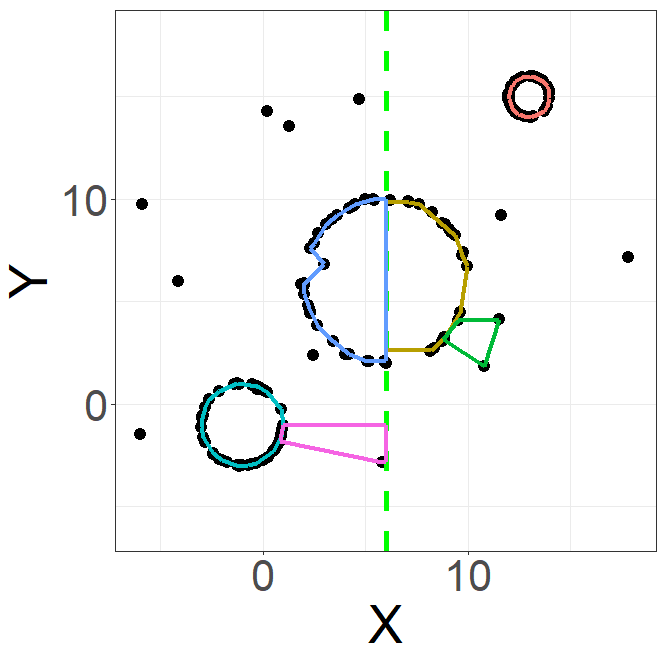}
			\caption{$H_1$ features recovered from DaC}
			\label{fig:ExampleLoops}
		\end{subfigure}
		\begin{subfigure}[t]{0.45\textwidth}
			\centering
			\includegraphics[width=5.5cm]{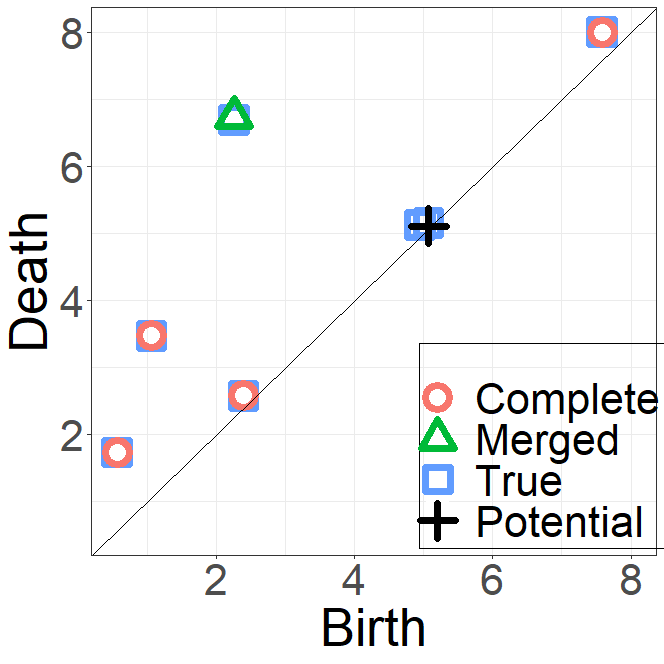}
			\caption{$H_1$ Persistence Diagrams}
			\label{fig:ExamplePD}
		\end{subfigure}
		\caption{Example of DaC method.  (a) The data (black points) are sampled on three circles (red lines) with noise. Representations of the loops from the true (b) and the sub-region DaC (c) persistence diagrams. (d) The true (blue squares) and DaC $H_1$ persistence diagrams, with the type of feature (complete, merged, true, potential) indicated by the color and symbol.}\label{fig:SimpleExample}
	\end{figure}

\subsection{General framework}\label{sec:general framework}
Given data points $Y_{1:n}=\{y_i\}_{i=1}^n$ sampled in $\R^D$, and the true persistence diagram (based on the full dataset) denoted as 
\begin{equation}
\D^0 = \{(r_{j}^0, b_{j}^0, d_{j}^0): j = 1, \ldots, |\D^0|\} \cup \Delta,
\end{equation}
we discuss the steps to estimate $\D^0$ using the proposed DaC method. 
To simplify the presention, let $[n]=1,2,\ldots,n$, and $\rho(y_i,y_j)$ be the Euclidean distance between $y_i$ and $y_j$. 

\subsubsection{Sub-region distance matrix construction}\label{sec:method:distance matrix construction}

Assume the data are divided into $m$ sub-regions. To recover the individual sub-region diagrams, the sub-region partitions are used to create shared boundaries, $\B$. Then, a VR filtration is computed on the distance matrix within each sub-region that includes distances between the data points in the sub-region, and distances between the data points and the supplemental boundaries of their sub-region. The distances between data points is a Euclidean distance, and the distance between a data point $x$ and the supplemental boundary $\B$ is defined as 
\begin{equation}
    \dist(x,\B)\defeq \min_{y\in\B} \rho(x,y).
\end{equation}
 We call a detected feature in a sub-region a \textit{sub-feature}. We call a sub-feature that does not form with the supplemental boundary a \textit{complete-feature}, and its associated diagram a \textit{complete diagram}. If a sub-feature forms with a supplemental boundary, then we call its associated diagram a \textit{potential diagram}. Notice that the number of supplemental boundaries added for each sub-region is at most $2D$ for hyperrectangle partitions, which does not change the order of the computation and memory burden for computing sub-features. 

The resulting \textit{sub-region-$l$ potential diagram} is denoted as
\begin{equation}\label{eq-def:sub-regtion-k diagram}
\D^l = \{(r_{j}^l, b_{j}^l, d_{j}^l): j = 1, \ldots, m_l\} \cup \Delta, l=1, \ldots, m,
\end{equation}
where $m_l$ is the number of potential-features within the $l$th sub-region. Suppose feature $(r_{j}^l, b_{j}^l, d_{j}^l)$ is associated with a subset of the observations, denoted $\{y^l_j(i)\}_{i\in[n_{j}^l]}$, where $n^l_j$ is the number of representative points for potential-feature $(r_{j}^l, b_{j}^l, d_{j}^l)$.

\subsubsection{Potential-Feature Merge Methodology}\label{sec:method:combination method}

Features fully contained in a sub-region are easily detected with the persistence diagram computation of the data points in each sub-region.  The primary challenge is detecting features that cross the supplemental boundary because these so-called potential-features are divided into two or more partitions.  Two approaches for merging potential-features are proposed, the Projected Merge method and the Representative Rips Merge method, where the former is recommended when $m$ is small, and the latter when $m$ is large.

\paragraph{Projected Merge method}
Let $\B^l_j$ denote the supplemental boundaries for potential-feature $j$ in sub-region $l$. 
We denote 
$\{y^l_j(i)\}_{i\in[n_{j}^l]}\in\M_{j}^{(l)}\subseteq \M$ where $\M_{j}^{(l)}$ is the part of the true feature $\M$ that is contained in $l$th sub-region. 
Define the \textit{topological projections} $\mathcal{P}^l_j$ of $\{y^l_{j}(i)\}_{i\in[n^l_j]}$ as $\mathcal{P}^l_j\subseteq\B^l_j$ such that $\mathcal{P}^l_j\cup \M_{j}^{(l)}$ forms a $r^l_j$-dimensional feature. 

To merge the potential-features that may contribute to the same true feature, the topological projections help by determining which potential-features from different sub-regions are projected onto the same boundary and therefore may be combined.
The mathematical construction of this step seeks to \textit{cancel} some projections if they are on shared boundaries.  That is, 
we aim to find a set of sub-feature topological projections $\{\mathcal{P}^{l}_j\}_{(l,j)\in\mathcal{A}}$, where $\mathcal{A}$ is the index set of the potential-features that may contribute to a true feature,  such that the module-$2$ sum of the $\mathcal{P}^{l}_j$s equals zero and forms a $r^l_j$-dimensional feature. In practice, if the module-2 sum is less than some (small) projection error, the projections can be merged.  See section \ref{sec-app:merge method} for more details.  

We remark that this method is particularly useful in theoretically achieving linearized computational complexity as $n\to\infty$ and when $m$ is small. On the other hand, when $m$ is large, the Projected Merge method can be unstable since it only detects a true feature when all of the sub-features that make up the true feature exist. Therefore, we propose the following Representative Rips Merge method as a stable merge method, which is what we primarily use in the empirical studies of Section~\ref{sec:simulation} and in the application in Section~\ref{sec:app}.

\paragraph{Representative Rips Merge method}

For the Representative Rips Merge method, the distance between potential-features is computed as the Euclidean distance between their nearest representative data points:
\begin{align}
    \rho(\{y_{j}^l(i)\}_{i\in[n_{j}^l]},\{y_{j'}^{l'}(i')\}_{i'\in[n_{j'}^{l'}]})=\min_{i,i'} \rho(y_{j}^l(i),y_{j'}^{l'}(i')),
\end{align}
for all distinct potential-features $\{y_{j}^l(i)\}_{i\in[n_{j}^l]}$ and $\{y_{j'}^{l'}(i')\}_{i'\in[n_{j'}^{l'}]}$. Then, the constructed distance matrix is used to compute a VR filtration and persistence diagram. The representative data points recovered from the resulting persistence diagram of the constructed distance matrix can identify which potential-features can be merged as a true feature based on their appearance on the persistence diagram.


\subsubsection{Merged Birth and Death Time Estimation}\label{sec:method:estimate recovery}

After the merge step, one possibility is to run a VR filtration on the merged features to estimate the merged feature's birth and death time.\footnote{Note that the DaC-estimated persistence diagram includes the merged persistence diagram and the complete diagram (i.e., the features that do not form with the supplemental boundary).} Since the data size of each feature is much smaller than $n$, recomputing a VR filtration is generally not expensive. We call this resulting persistence diagram the \textit{Naive estimate}, defined as 
\begin{align}\label{eq-def:naive estimate}
    \hat{\D}\defeq \{(\hat{r}_{j'}, \hat{b}_{j'}, \hat{d}_{j'}): j' = 1, \ldots, |\hat{\D}|\} \cup \Delta,
\end{align}
where the $(\hat{r}_{j'}, \hat{b}_{j'}, \hat{d}_{j'})$ are from the recomputed persistence diagram on the subset of representative points.
An alternative approach that avoids recomputing a VR filtration is 
using potential-diagrams to estimate the persistence diagram of the merged data as 
\begin{equation}
    \D^C\defeq \{(r^{C}_{j'}, b^{C}_{j'}, d^{C}_{j'}): {j'} = 1, \ldots, |\D^C|\} \cup \Delta,
\end{equation}
where the $(r^{C}_{j'}, b^{C}_{j'}, d^{C}_{j'})$ are estimated using the combined potential-features' representative data, $\{y^l_{j}(i)\}_{i\in[n_{j}^l]}$, and their persistences $(r_j^l,b_j^l,d_j^l)$, where $(l,j)\in\mathcal{A}_{j'}$. These combined data are denoted as $\{y_{i}^C\}_{i}^{j'} \defeq \cup_{(l,j)\in\mathcal{A}_{j'}} \{y^l_{j}(i)\}_{i\in[n_{j}^l]}$. In the following, for simplicity, we write $\{y_{j}^C\}_{j}^{j'}$ as $\{y_{i}^C\}_{i}$. Then a birth time estimate, $b^{C}_{j'}$, can be computed as
\begin{equation}\label{eq:birthEsti}
  b^{C}_{j'} = \max_{(l,j)\in\mathcal{A}_{j'}}\{b^l_j\}.
\end{equation}
This is a reasonable estimate for the birth time because the birth time of the true feature is close to the maximum birth time of the potential-features that are merged, as the birth time is a local property.
However, this approach does not work for the death time estimate because the death time relies on global information of the feature. Therefore, we consider estimating the death time by only using a specific subset of data of the feature $\{y^C_i\}_i$ and approximating the $k$-dimensional feature as a $k$-sphere. To estimate the death time, we consider all the $(k+1)$-simplexes using $(k+2)$ representative points $y^C_{i_1},y^C_{i_2},y^C_{i_3},\dots,y^C_{i_{k+2}}\in\{y_i^C\}_i$ as
\begin{align}
    \hat{y}^C_{i_1},\hat{y}^C_{i_2},\hat{y}^C_{i_3},\dots,\hat{y}^C_{i_{k+2}}=\underset{y^C_{i_1},y^C_{i_2},y^C_{i_3},\dots,y^C_{i_{k+2}}\in\{y_i^C\}_i}{\mathrm{argmax}}\left(\min_{y^C_{i_s},y^C_{i_t}\in \{y^C_{i_1},\dots,y^C_{i_{k+2}}\}}\rho(y^C_{i_s},y^C_{i_t})\right).
\end{align}
Then, we consider the following death time estimate:
\begin{align}\label{eq:deathEsti}
    d^C_{j'}= \max_{\hat{y}^C_{i_s},\hat{y}^C_{i_t}\in \{\hat{y}^C_{i_1},\dots,\hat{y}^C_{i_{k+2}}\}}\rho(\hat{y}^C_{i_s},\hat{y}^C_{i_t}).
\end{align}
Equation~\ref {eq:deathEsti} seeks the largest equilateral $k$-simplex that fits in the feature and use the maximum length among its edges to represent the death time, which is a good approximation if the feature is a $k$-sphere; this formula requires a memory of $\OO(1)$ and a computational complexity of $\OO(n^{k+2})$. This is a consistent estimator for a $k$-dimensional sphere, but may be larger than the true death time for other shapes. We call the estimates based on Equations \eqref{eq:birthEsti} and \eqref{eq:deathEsti} as \textit{pointwise estimates}.

\subsection{Computational Complexity and Memory} \label{sec:memory}
Pseudo-code of the proposed DaC is displayed in Algorithm \ref{algorithm:DaC}. Assume the maximum dimension of a feature to recover is $k$. A natural approach for partitioning the data space is to divide the whole space into $m$ equally-sized hyperrectangles, as in Figure \ref{fig:SimpleExample}. Other designs of sub-regions can also be considered if density information or certain details on the embedding space are known.  For example, a reasonable alternative is to divide the data so that the number of data points is similar in each sub-region. The choice of $m$ has a trade-off between accuracy and computational cost such that a smaller $m$ has higher accuracy but also a higher computational cost. In general, selecting $m$ such that $2\le m \le \OO\left(n/\log(n)\right)$ ensures a high probability of recovering the feature; more details are provided in Section~\ref{sec:theory}.
Assuming the data are evenly distributed in each sub-region, then the computational complexity to compute a persistence diagram up to and including homology dimension $k$ decreases from $\OO(n^{3k+6})$ with the traditional persistence diagram computation to $\OO\left(m\left(n/m\right)^{3k+6}\right)$ with the proposed DaC. 
In the worst-case scenario with a memory of $\OO(n^{2k+3})$, DaC can reduce the memory to $\OO\left(\left(n/m\right)^{2k+3}\right)$. 
If $m=\OO\left(n/\log(n)\right)$, then DaC's computational complexity is (almost) linear as $\OO\left(n\left(\log(n)\right)^{3k+5}\right)$ with a memory $\OO((\log n)^{2k+3})$.  

The merging step is not trivial algorithmically or computationally. The Projected Merge method takes $\OO(m)$ steps to merge, and the Representative Rips Merge method has a worst-case computational complexity of $\OO(m^{3k+6})$ to merge the potential-features. With $m\sim n/\log(n)$, the computational complexity for DaC with the Projected Merge method is $\OO(m+n^{3k+6}/m^{3k+5})$ scaling (almost) linearly to $n$. The computational complexity for the Representative Rips Merge method is $\OO(m^{3k+6}+m\left(n/m\right)^{3k+6})$, and is optimal when $m\sim n^{\frac{3k+6}{6k+11}}$. Fortunately, we do not see the worst computational complexity showing up in the simulation. On the other hand, one could apply the DaC iteratively to improve the computational complexity of the Representative Rips Merge method.
	\begin{algorithm}[H]
	\begin{algorithmic}
	\State \textbf{Input: } $Y_{1:n}$ (data), $k$ (homology dimension), $m$ (number of sub-regions)
 \State \textbf{Input (optional):} Partition design $\P = \cup_{l=1}^m\P^{(l)}$ (default: equal-sized sub-regions)
	\State \textbf{Output:} Estimated persistence diagram of homology dimension $k$ 
 \If{$\P=\emptyset$}{ Create $m$ equally-sized sub-regions with supplemental boundary, $\P$} 
 \EndIf{}
 \State Define:  Out$\leftarrow \emptyset$ (store complete features), PF$\leftarrow \emptyset$ (store potential-features)
    \For{l in 1:m}
        \State{Step 1:} For $Y_{1:n}\cup (\B^{(l)} \in \P^{(l)})$, compute the distance matrix, Dist$^{(l)}$ (\S\ref{sec:method:distance matrix construction}) 
        \State{Step 2:} Compute persistence diagram of Dist$^{(l)}$, $\D^{l}$
        \State{Step 3:} Out $\leftarrow$ Out$\cup \D^l_{\text{complete}}$
        \State{Step 4:} PF $\leftarrow$ PF$\cup \D^l_{\text{potential}}$
    \EndFor{}
 \If{PF$\neq\emptyset$}{ Merge potential-features (\S\ref{sec:method:combination method})}, estimate birth and death times (\S\ref{sec:method:estimate recovery}), $\hat{\D}$ or $\D^C$, and Out $\leftarrow$ Out$\cup (\hat{\D} \text{ or } \D^C$)
 \EndIf{}
\State \Return{Out}
\end{algorithmic}
\caption{DaC Persistent Homology Recipe}
\label{algorithm:DaC}
\end{algorithm}

\subsection{Theoretical Results}\label{sec:theory}

This section highlights the main theoretical results for the proposed DaC method to estimate a persistent diagram.  Let $ \M $ be a $\mo$-dimensional \textit{closed  and convex feature} embedded in $ \R^{D} $. We defer the concrete mathematical assumptions and proofs to the Appendix. Convexity of the feature is assumed to avoid the ambiguity of the number of features.

Let $Y_{1:n} = \{y_i\}_{i=1}^n \in \R^{D}$ be data sampled independently from a distribution $\mu$ supported on $\M$. The distribution $\mu$ is of the form
\begin{equation}\label{eq:dmu}
    \dd\mu = \phi\ \dd \vol_\M,
\end{equation}
for density function $\phi:\M\to \R_+$. 
The following Assumption \ref{assumption:probability} guarantees that there will not be a too sparse region in the data distribution.
\begin{restatable}{thm}{Assum}\label{assum:phi}
\begin{assumption} \label{assumption:probability} For density function $\phi$ from Equation~\eqref{eq:dmu}, 
    \begin{equation}\label{eq:assum-phi_n}
    \phi(y)\geq  c_\phi,  y\in\M,
\end{equation}
for some positive constant $c_\phi$.
\end{assumption}
\end{restatable}

The following theorem establishes a trade-off between the error of the birth and death time estimates and the recovery probability of the feature. 
\begin{theorem}\label{thm:informal}
    With respect to the true diagram $\D^0=\{(k, b^0, d^0)\}$ computed using the full dataset $\Yn$, and under some mild assumption on the data partitioning, there exist representative data points of potential-features $\{y_{j}^l(i)\}_{i\in[n_{j}^l]}$ where $(l,j)\in \mathcal{A}$ which can be merged as a $k$-dimensional feature with naive birth and death time estimators (Equation~\eqref{eq-def:naive estimate}) denoted as $(k,\hat{b},\hat{d})$. For a given error, $\delta$, such that $0< \delta \le c m^{-\frac{1}{k}}$, with probability at least $1-\frac{C}{\delta^\mo}\exp(-{cn\delta^{\mo}})$, 
    \begin{equation}
      |b^0-\hat{b}|\le \delta ;\quad  |d^0-\hat{d}|\le \OO(\delta),
    \end{equation}
    where constants $C>1$, $0<c<1$ only depends on intrinsic properties of $\M$ and $\phi$.
\end{theorem}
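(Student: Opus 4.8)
The probability bound $1-\tfrac{C}{\delta^{k}}\exp(-cn\delta^{k})$ has the unmistakable shape of a covering estimate, so the backbone of the plan is a net argument. First I would prove a sampling lemma: under Assumption~\ref{assumption:probability}, with probability at least $1-\tfrac{C}{\delta^{k}}\exp(-cn\delta^{k})$ the sample $\Yn$ is a $\delta$-net of $\M$, i.e. every point of $\M$ lies within $\delta$ of some $y_i$. To see this, fix a minimal $\tfrac{\delta}{2}$-cover of $\M$; since $\M$ is a $k$-dimensional closed set with bounded geometry, this cover has cardinality $N\le \tfrac{C}{\delta^{k}}$. Each cover ball meets $\M$ in a set of $\mu$-measure at least $c_\phi\,\vol(\text{ball}\cap\M)\ge c\,\delta^{k}$ by the density lower bound, so the probability that a given ball contains no sample point is at most $(1-c\delta^{k})^{n}\le \exp(-cn\delta^{k})$. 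A union bound over the $N$ balls yields the claimed event, and on it both $\Yn$ and any sub-net of representatives are $\delta$-close to $\M$ in Hausdorff distance.

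Next I would condition on this good event together with the standing constraint $0<\delta\le c\,m^{-1/k}$. This constraint forces $\delta$ to be small compared with the linear scale $m^{-1/k}$ of a single sub-region, which is what I would use to argue merge correctness: the portion $\M_{j}^{(l)}$ of $\M$ falling in sub-region $l$ is sampled densely enough that the supplemental boundary closes it into a genuine potential-feature, the topological projections onto shared boundaries line up across adjacent sub-regions, and hence the merge step of Section~\ref{sec:method:combination method} reassembles exactly the representative set $S=\bigcup_{(l,j)\in\mathcal{A}}\{y^l_j(i)\}_i$ of a single $k$-dimensional feature. Here convexity of $\M$ is essential: it guarantees the feature is unique and its projections unambiguous, so no spurious feature is created or lost at the artificial cuts.

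The birth and death bounds then follow from persistence stability. Since the birth of the top class is a local event, I would invoke the $\delta$-net property directly: both $\Yn$ and $S$ place a point within $\delta$ of every location on $\M$, so the scales at which the local simplices needed to create the $k$-dimensional hole first appear differ by at most $\delta$; formalizing this through a $\delta$-interleaving of $\text{VR}_\bullet(\Yn)$ and $\text{VR}_\bullet(S)$ on the relevant homology gives $|b^0-\hat b|\le\delta$. For the death time I would use the $k$-sphere model underlying Equation~\eqref{eq:deathEsti}: death is the global filling scale, which for a convex (hence $k$-sphere-type) feature is a Lipschitz function of the point cloud in Hausdorff distance. Since $d_H(\Yn,\M)\le\delta$ and $d_H(S,\M)\le\delta$ on the good event, the largest inscribed equilateral $k$-simplex among the representatives approximates the true filling scale up to a constant multiple of $\delta$, giving $|d^0-\hat d|\le\OO(\delta)$. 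Assembling these steps on the single good event produces the stated conclusion.

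I expect the two genuinely hard points to be merge correctness and the death estimate. The merge step is delicate because the supplemental boundaries augment the metric with boundary-distances $\dist(x,\B)$ that need not interact with the Euclidean point distances through the triangle inequality, so one must check that the projection and cancellation bookkeeping neither fabricates nor destroys a feature; the inequality $\delta\le c\,m^{-1/k}$ is precisely the slack that makes this work. The death estimate is subtle because death is global and the equilateral-simplex formula is exact only for an ideal $k$-sphere, so the convexity assumption and a quantitative stability-under-subsampling argument must be combined to control the error at order $\delta$ rather than merely $\OO(1)$.
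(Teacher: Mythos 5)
Your overall skeleton --- a $\tfrac{\delta}{2}$-cover of $\M$ of cardinality $\OO(\delta^{-k})$, a per-ball occupancy probability $\gtrsim c_\phi\delta^{k}$ from the density lower bound, a union bound giving the $1-\tfrac{C}{\delta^{k}}\exp(-cn\delta^{k})$ event, and then a comparison of birth/death times on that event --- is exactly the paper's route (Proposition~\ref{prop:manifold}, applied per sub-region in Lemma~\ref{lem:sub-features can be recovered}, then assembled in Theorem~\ref{thm:bhat and b; dhat and d}). The birth-time step is also essentially the paper's: birth is local, so a triangle inequality across the (at most two) sub-regions realizing the birth edge gives the $\OO(\delta)$ bound.

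The genuine gap is in your claim that, on the good event, the merged representative set $S$ satisfies $d_H(S,\M)\le\delta$, from which you derive the death-time bound. This is false as stated: the supplemental boundary $\B$ participates in the sub-region VR filtration, so points of $\M$ within distance roughly $2\delta$ of $\B$ can be \emph{masked} --- absorbed into simplices with the boundary and never returned as representative points of the potential-feature. The set $S$ therefore has holes near every cut, and the width of those holes \emph{measured along $\M$} is not automatically $\OO(\delta)$: if $\B$ meets $\M$ nearly tangentially, a Euclidean slab of thickness $2\delta$ around $\B$ can contain a geodesically much larger piece of $\M$, and the death estimate (a global quantity) would then be off by far more than $\OO(\delta)$. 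The paper closes this with two ingredients you do not invoke: the Manifold--Boundary angle assumption (Assumption~\ref{assum:manifold angle}), which bounds the intersection angle away from tangency by $\beta<\pi/2$, and a local quadratic approximation of $\M$ near $\B$ (Lemma~\ref{lem:quadbehavior}), which together show the masked region has geodesic extent at most $\tfrac{2\delta}{\cos\beta}+\tfrac{C\delta^2}{\cos^2\beta}$ (Lemma~\ref{lem:masked points on Mi}). Only after that does the Hausdorff-type comparison you describe go through, and it is precisely where the $1/\cos\beta$ factors in the formal bound $|\hat d - d_\M|\le \tfrac{4\delta}{\cos\beta}+\tfrac{C\delta^2}{\cos^2\beta}$ come from. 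You correctly flag the merge step as delicate, but the specific missing idea is this quantitative masking bound; without an angle hypothesis your death-time estimate cannot be pushed below $\OO(1)$ in the worst case.
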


\begin{sloppypar}
When the error bound $\delta$ scales as $m^{-\frac{1}{\mo}}$, the recovery probability in Theorem \ref{thm:informal} is $1-Cm\exp(-{cn/m})$. Consider the case of $m=\Omega(1)$, then the recovery probability is exponential in $n$. If $m$ is chosen as $\Omega(n/\log(n))$, the computational complexity is linear with $n$ and the recovery probability is $1-1/(c\log(n))$. This shows that DaC can scale as a linear algorithm as \cite{Sheehy2012Linear}; however, our result shows the trade-off between $m$ and the recovery probability explicitly. Therefore, one may choose an $m$ to balance computation and approximation considerations.  Furthermore, when $n\to\infty$, the error of birth and death time estimates converges to $0$, while the approach in \cite{Sheehy2012Linear} does not guarantee this.
\end{sloppypar}

The previous result is for a single manifold, but we also provide a multi-manifold generalization detailed in section \ref{sec:multiple}. Assume $\M=\cup_{l=1}^K\M_l$ where $\M_l$ is $\mo$-dimensional closed and convex manifold, and that the $\M_l$ are separable in the sense that
\begin{align}
    \min_{l\not= l'}\dist(\M_l,\M_{l'})\ge \kappa,
\end{align}
where $\kappa>0$ is the smallest distance between the $\M_l$. 
This assumption is provided to prevent the ambiguity of the number of features.\footnote{For more details about the properties of the features, see section \ref{sec:multiple}.}
The distribution $\mu$ is assumed to be a mixture model taking the form
	\begin{equation}
	    d \mu= w_1\phi_1 d\vol_{\M_1}   + \dots +w_K \phi_K d \vol_{\M_K},
	\end{equation}
where $\dd\vol_{\M_l}$ denotes integration with respect to the Riemannian volume form associated with $\M_l$.  Then, we obtain the following result.

\begin{theorem}
    Under mild assumption on the DaC construction and the value of $m$, when $0< \delta \le c m^{-\frac{1}{k}}$, with probability at least $1-\frac{C}{\delta^{\mo}}\exp(-{cn\delta^{\mo}})$,  
    \begin{align}
      |b_l-\hat{b}_l|\le \delta ;\quad  |d_l-\hat{d}_l|\le \OO(\delta),
    \end{align}
    where $b_l$ and $d_l$ are the birth and death times of $\M_l$, respectively, $\hat{b}_l$ and $\hat{d}_l$ are the corresponding naive birth and death time estimates, and $C>1$, $0<c<1$ only depend on the intrinsic properties of $\M$.
\end{theorem}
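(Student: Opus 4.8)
The plan is to reduce the multi-manifold statement to $K$ independent applications of the single-manifold Theorem~\ref{thm:informal}, exploiting the separation hypothesis $\min_{l\neq l'}\dist(\M_l,\M_{l'})\ge\kappa$ to decouple the components, and then to close with a union bound over the $K$ manifolds. The first step is a \emph{decoupling lemma}: for every scale $t<\kappa$, no edge of the VR filtration can join a point of $\M_l$ to a point of $\M_{l'}$ with $l\neq l'$, since any such pair is at distance at least $\kappa$. Hence, up to that scale, the VR complex on the full data (and the complex within each sub-region once the supplemental boundaries are added) splits as a disjoint union of the sub-complexes supported on the individual $\M_l$, and the true diagram factors as $\D^0=\bigsqcup_{l}\D^0_l$. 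The ``mild assumption on the DaC construction and the value of $m$'' is to be read as guaranteeing that the birth and death scales of every feature, together with the $\OO(\delta)$ error margins around them, stay below $\kappa$, so that this splitting persists over the whole range of scales that enter the estimates $\hat b_l,\hat d_l$.

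Given the decoupling, the merge and estimation steps act on each $\M_l$ in isolation. The representative points of any potential-feature contributing to $\M_l$ lie entirely on $\M_l$, and the Representative Rips Merge cannot fuse sub-features from distinct manifolds because their nearest representative points are at distance $\ge\kappa$; therefore the merged set on which the naive VR filtration (Equation~\eqref{eq-def:naive estimate}) is recomputed is supported on a single $\M_l$, and the whole computation reduces verbatim to the single-manifold case of Theorem~\ref{thm:informal}. To quantify the effective sample size per component, observe that under the mixture $d\mu=\sum_l w_l\phi_l\,d\vol_{\M_l}$ the number $n_l$ of points landing on $\M_l$ is $\mathrm{Binomial}(n,w_l)$, so a Chernoff bound gives $n_l\ge \tfrac12 w_l n$ for all $l$ simultaneously with probability $1-K\exp(-c'n)$, $c'=\tfrac18\min_l w_l$. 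Conditioned on this event, the restriction of $\mu$ to $\M_l$ has density $\phi_l$ (up to normalization) bounded below in the sense of Assumption~\ref{assumption:probability} applied component-wise, so Theorem~\ref{thm:informal} applies to the effective sample of size $n_l$ of order $w_l n$ and yields $|b_l-\hat b_l|\le\delta$ and $|d_l-\hat d_l|\le\OO(\delta)$ with per-component failure probability at most $\frac{C_l}{\delta^{k}}\exp(-c_l n\delta^{k})$, the factor $w_l$ being absorbed into $C_l,c_l$.

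Finally, a union bound over the $K$ components together with the sample-size event gives a total failure probability of at most $\sum_{l}\frac{C_l}{\delta^{k}}\exp(-c_l n\delta^{k})+K\exp(-c'n)$. Taking $C=K\max_l C_l$ and $c=\min\{\min_l c_l,\,c'\}$ and using $\delta^{k}\le 1$ (so that $\exp(-c'n)\le\exp(-cn\delta^{k})\le\frac{C}{\delta^{k}}\exp(-cn\delta^{k})$) collapses this into the claimed bound $\frac{C}{\delta^{k}}\exp(-cn\delta^{k})$; because $K$ is a fixed constant, this only affects the constants, which continue to depend solely on the intrinsic properties of $\M$ and the $\phi_l$.

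The main obstacle I anticipate is the decoupling lemma in its operational form. The pointwise statement ``no short edge between manifolds'' is immediate, but one must verify that \emph{the DaC pipeline as a whole respects the decomposition} even though a single sub-region may cut simultaneously through several of the $\M_l$: one has to rule out spurious merges across components and any inflation of the death-time estimate caused by representative points straddling two manifolds, and one must pin down exactly which relation between $m$, $\kappa$, and the feature diameters forces every scale entering $\hat b_l,\hat d_l$ to remain below $\kappa$. Once this geometric bookkeeping is settled, the probabilistic content is inherited verbatim from Theorem~\ref{thm:informal} and the union bound.
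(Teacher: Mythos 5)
Your overall strategy---use the separation $\kappa$ to reduce to $K$ independent applications of the single-manifold result, then union bound---is the same as the paper's. The paper likewise proves this theorem by rerunning Proposition~\ref{prop:manifold} and Lemmas~\ref{lem:sub-features can be recovered}, \ref{lem:masked points on Mi}, and \ref{lem:merge-Representative Rips Merge method} on each $\M_l$ separately and summing the failure probabilities over all sub-regions of all components. Your Chernoff bound on the per-component sample size is a harmless reformulation of what the paper encodes in Assumption~\ref{data}.

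The genuine problem is the operational form of your decoupling lemma. You read the ``mild assumption on the DaC construction and the value of $m$'' as forcing \emph{every scale entering $\hat b_l,\hat d_l$}, i.e.\ essentially $d_{\M_l}+\OO(\delta)$, to stay below $\kappa$, so that the VR complex splits as a disjoint union over the entire relevant filtration range. No assumption on $m$ or on the partition can deliver this: $d_{\M_l}$ and $\kappa$ are both intrinsic to the manifold configuration, and nothing in the paper rules out two large loops of diameter far exceeding their mutual distance $\kappa$. The paper's actual hypothesis (Assumption~\ref{assum:m and kappa}) is only that the sub-region diameter $C/m^{1/\mo}$ is below $\kappa$, which guarantees that each sub-region---hence each potential-feature and its representative points---is supported on a single $\M_l$; the paper explicitly concedes that at scales $\ge\kappa$ the filtration does \emph{not} split and extra features may be born there. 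The decoupling you need is therefore not ``the complex factors at all scales'' but the weaker statement that the cycle identified at birth time (which occurs at scale $\OO(m^{-1/\mo})+\OO(\delta)<\kappa$) has all its representative potential-features on one $\M_l$, after which the naive re-computation on that representative set is a single-manifold problem regardless of whether $d_{\M_l}$ exceeds $\kappa$. Your own closing paragraph correctly identifies this as the unresolved obstacle; as written, the proof establishes the theorem only under the additional geometric hypothesis $d_{\M_l}+\OO(\delta)<\kappa$ for all $l$, which is strictly stronger than what the paper assumes.
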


\section{Simulation Study}\label{sec:simulation}

\subsection{Two Circles Example}\label{sec:simulation:two circle cases}

To illustrate the performance of the proposed DaC method, we consider a two-feature example. The data are generated with 413 data points such that the density of points on different features is the same, but the features are of different sizes; see an example in Figure \ref{fig:1}. Persistence diagrams are estimated using the k-means++ \citep{moitra2018cluster}\footnote{According to the implementation in \url{https://github.com/wilseypa/lhf}.} and DaC algorithms with varying parameter values of the number of clusters $k_c$ and the number of sub-regions $m$, respectively. 
For the proposed DaC algorithm, unless otherwise specified, the Representative Rips Merge method and pointwise estimates (Section~\ref{sec:method:estimate recovery}) are used. The data regions are split into $m$ equal-sized sub-regions, and the DaC algorithm is applied as described in Algorithm \ref{algorithm:DaC}.  First the two methods are compared for a single dataset, then we consider 100 iid realizations of the same data-generating process where the methods are compared based on the frequency of correctly detecting the features, and on the bottleneck distance between the estimated and true persistence diagrams.

Recall that the cluster-based algorithm \citep{moitra2018cluster} uses the k-means++ algorithm to select $k_c$ points on which to compute the persistence diagram. To detect two features in the persistence diagram for the data displayed in  Figure \ref{fig:1}, the k-means++ requires $k_c\geq 90$, which is the same order as the total number of data points. This is not surprising because the unbalanced sample size influences the efficiency of k-means++ making it challenging to detect the smaller feature. This illustrates that the performance of the cluster-based algorithm depends on the preliminary clustering algorithm and suffers from its inability to see the small-scale topological features. 
On the other hand, the proposed DaC algorithm only needs $m=15$, with each sub-region containing approximately 25 points to detect the two features. Therefore, the running time and required memory of DaC are better than k-means++ when conditioning on both methods detecting the correct number of features.

For additional comparisons between the cluster-based algorithm of \cite{moitra2018cluster} and the DaC method, we generate 100 iid datasets as in Figure~\ref{fig:1}\footnote{The implementation can be found in \href{https://anonymous.4open.science/r/Divide-and-Conquer-method-in-TDA-603F/}{https://anonymous.4open.science/r/Divide-and-Conquer-method-in-TDA-603F/}}, and test different sets of parameters of the k-means++ algorithms and the DaC algorithm with varying parameter values for $k_c$ and $m$. The proposed DaC successfully recovers the birth and death time estimates even when $m=225$, such that the average bottleneck distance to the true birth and death times is small ($0.0078$). When $k_c=103$, the averaged bottleneck distance to the true birth and death times is $0.0103$. To compare the algorithms under the same maximum memory with $k_c=25$ in k-means++ and $m=64$ in DaC, then the average bottleneck distance of k-means++ to the true birth and death times is $0.1015$, which is worse than the accuracy of DaC estimates with an average bottleneck distance of 0.0061.
%
%
\begin{figure}[htbp]
\centering
			\centering    	        \includegraphics[width=5.5cm]{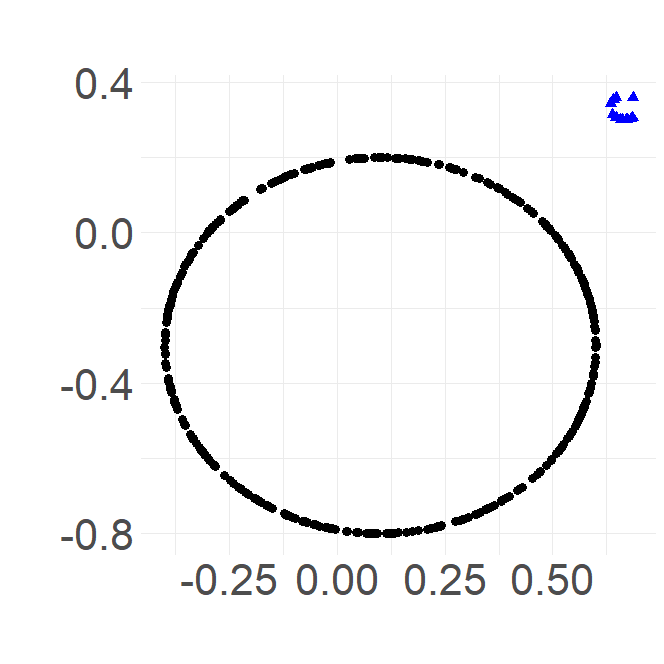}
			\caption{Two circles with radius $1$ (black points) and $1/30$ (blue triangles); the number of data points on each circle is 400 and 13, respectively. The goal is to recover two $H_1$ features on a persistence diagram and estimate their birth and death times.}
			\label{fig:1}
	\end{figure}

Figure~\ref{fig:bottleneck distance with kmeans++} displays the bottleneck distance results under the varying parameter values. The choice of $k_c$ represents 6\%, 10\%, 25\%, 50\% and 75\% of the sample size $n$, similar to the suggested values in \cite{moitra2018cluster}.  The bottleneck distance of the proposed DaC algorithm is significantly better than the k-means++ algorithm when the number of clusters $k_c$ is small. It is not until $k_c$ equals almost the same order as the total number of data points ($n=413$) that the bottleneck error of k-means++ becomes competitive with DaC, but involves a much higher memory cost.
\begin{figure}[htbp]
\centering
	\includegraphics[width=9cm]{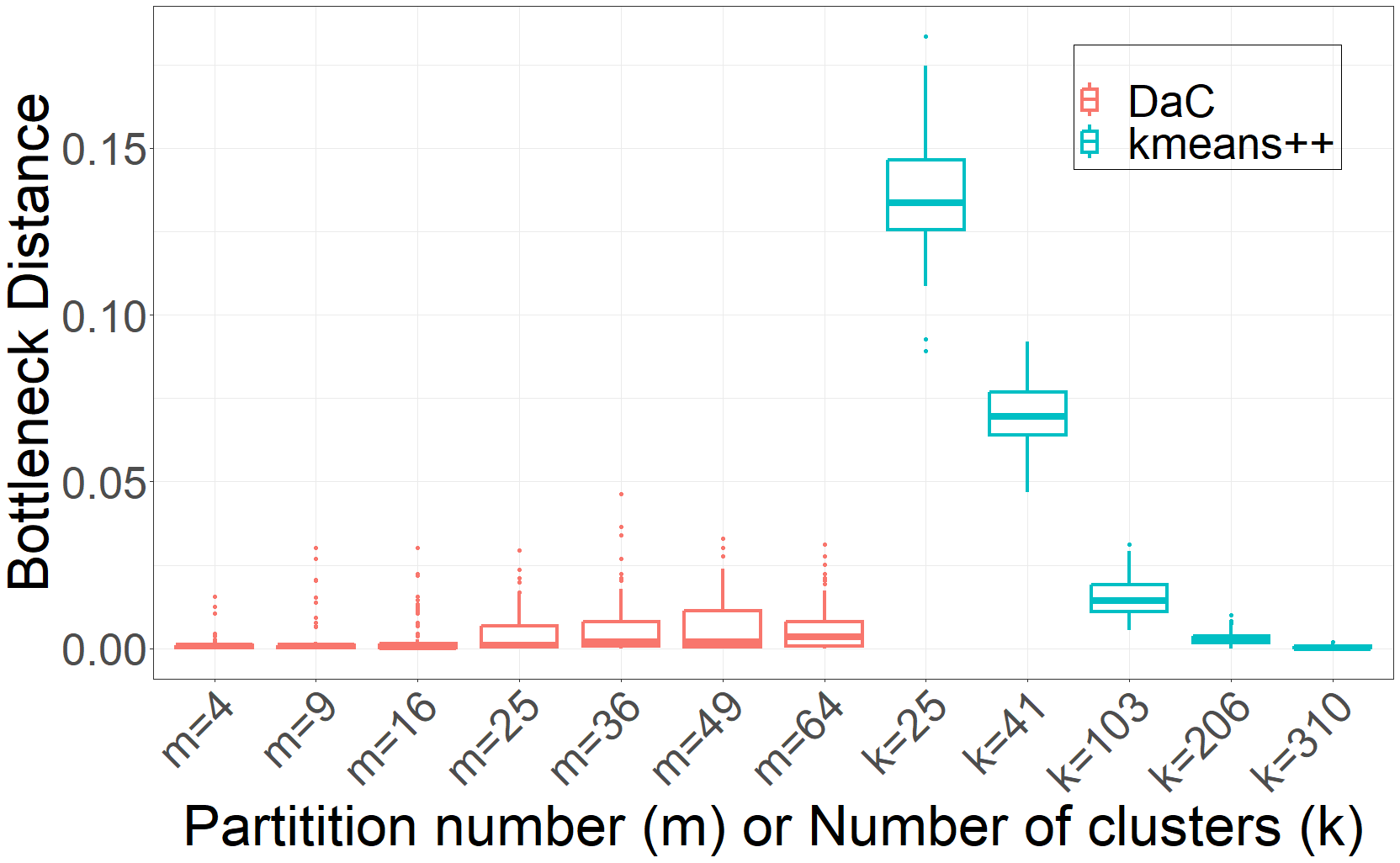}
			\caption{The bottleneck distance between the true persistence diagram and the proposed DaC method or k-means++ methods based on 100 i.i.d. realizations of the two-circle data from Figure~\ref{fig:1}, with different values for $m$ and $k_c$. 
   }
			\label{fig:bottleneck distance with kmeans++}
	\end{figure}

To compare the memory, we count the number of points in each sub-region and the total number of sub-regions that contain at least one point, and use this to approximate the maximum memory size and the computational complexity for the proposed DaC. As a comparison, the number of clusters $k_c$ determines the memory size for k-means++, which is $\OO(k_c^5)$ for $k=1$ corresponding to an $H_1$ feature, see Section~\ref{sec:memory}. Recall that the memory and computational complexity are polynomial in the number of points used in VR filtration. As seen from Figures \ref{fig:bottleneck distance with kmeans++}, \ref{fig:MaxPointsWithinSubregions} and \ref{fig:Total Representative sub-regions}, the memory and computational complexity of the proposed DaC with $m=4$ approximately equal the k-means++ memory and computational complexity with $k_c=103$, but the bottleneck distance for the birth and death time estimate errors is much lower for the DaC method. When $m$ becomes larger in the proposed DaC method, the bottleneck distance to the true estimates only mildly increases, while the reduction in memory and computational complexity is substantial.
\begin{figure}[htbp]
\par
		\centering
		\begin{subfigure}[t]{0.45\textwidth}
			\centering
    	         \includegraphics[width=6cm]{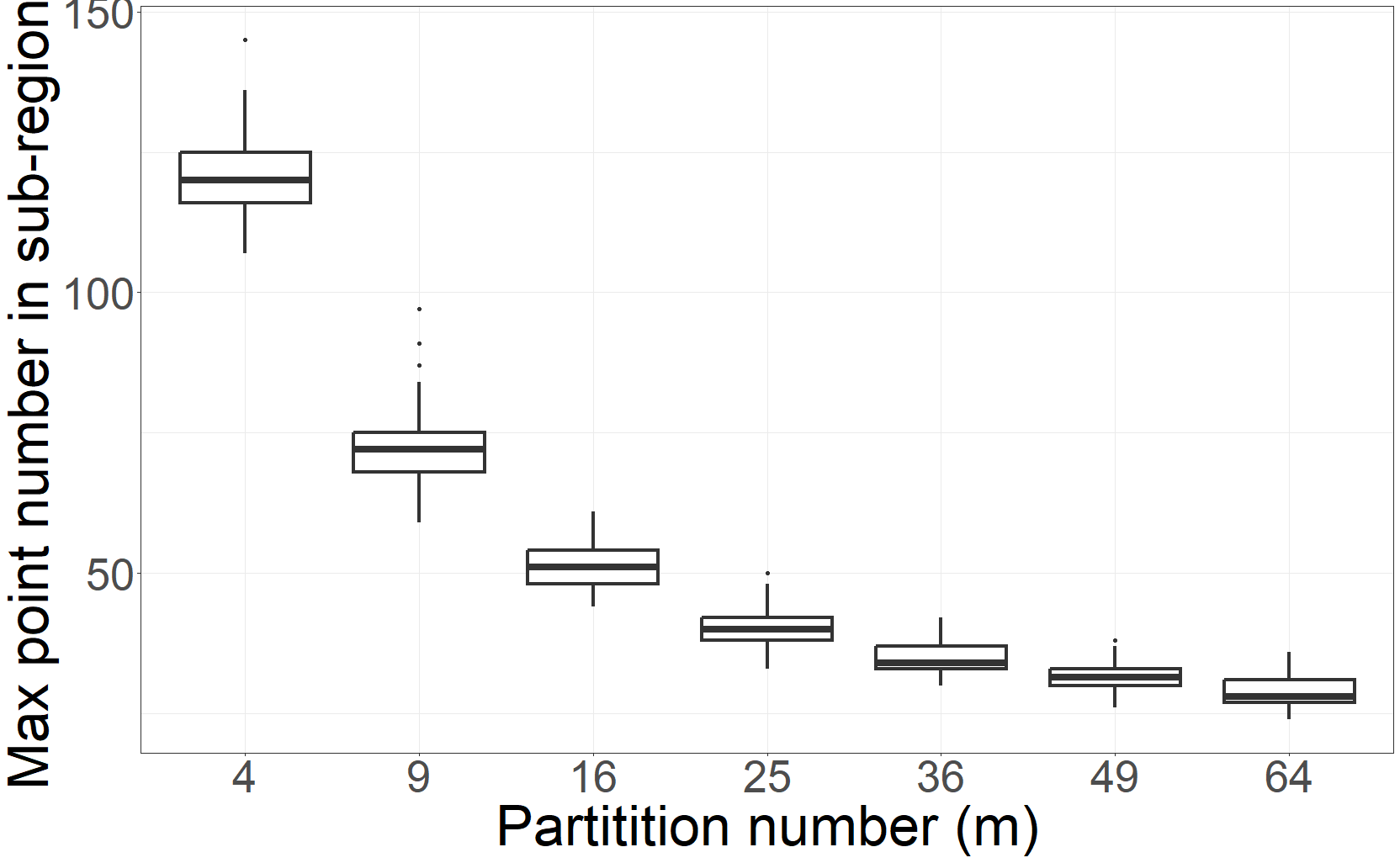}
			\caption{Max sample size within each sub-region}
			\label{fig:MaxPointsWithinSubregions}
			\end{subfigure}
		\begin{subfigure}[t]{0.45\textwidth}
 \centering  	        \includegraphics[width=6cm]{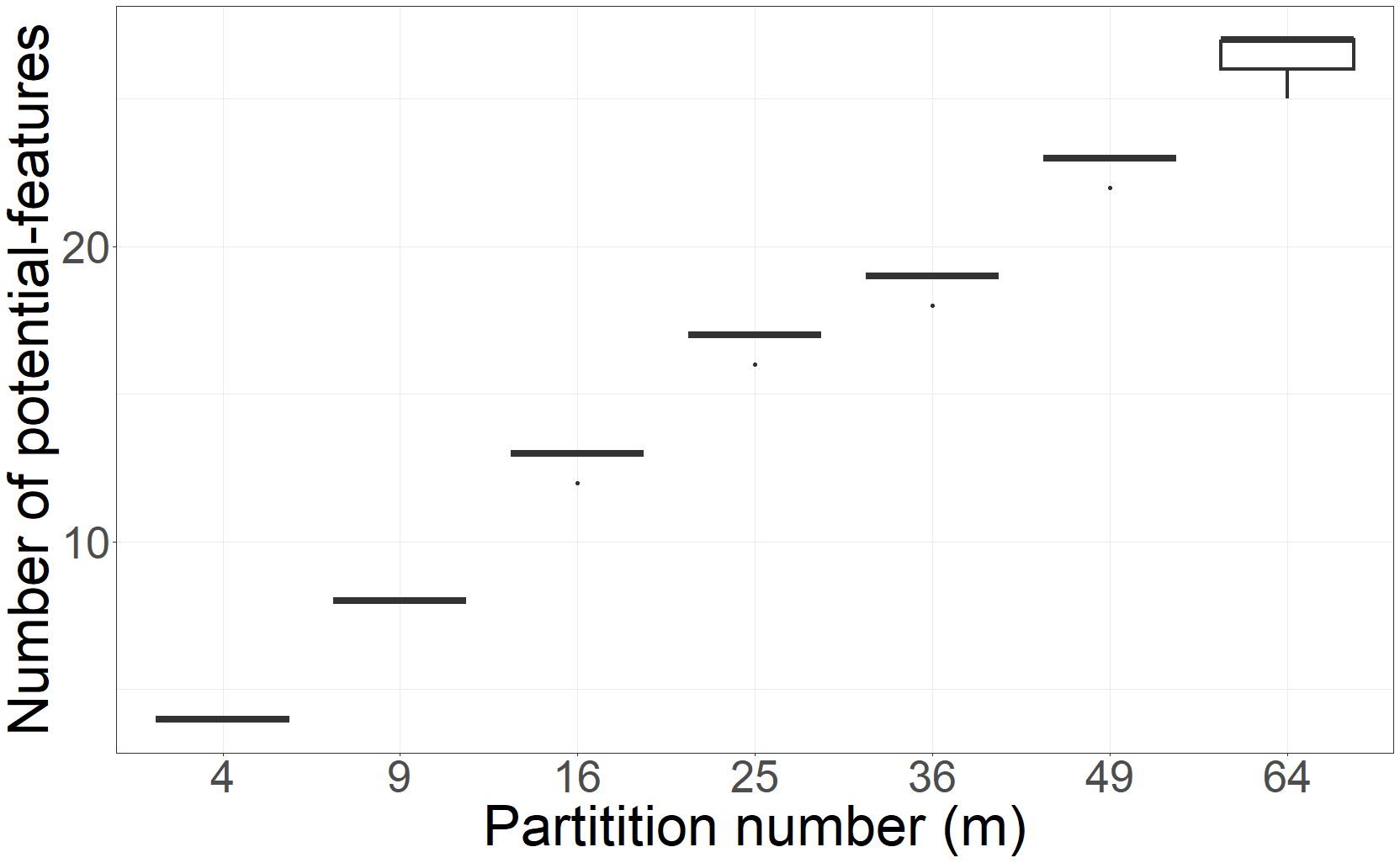}
			\caption{Total number of potential-features}
			\label{fig:Total Representative sub-regions}
   \end{subfigure}
   \caption{The memory required for the proposed DaC method for a given $m$ is the 5th power of the larger value between the maximum number of data points within a sub-region (a)  and  the total number of potential-features (b); see Section~\ref{sec:memory}.}
	\end{figure}

\subsection{Computation and Memory}

\subsubsection{Representative Rips Merge method}\label{sec:Representative Rips Merge method}
We test the recovery probability rate of the $H_1$ feature and the bottleneck distance with respect to different sample sizes and the choices of the total number of sub-regions, $m$. The two proposed merge methods, the Representative Rips Merge method and the Projected Merge method, are evaluated in this section and Section~\ref{sec:eval_pmm}, respectively. The underlying feature is a one-dimensional circle, and we generate $n$ data points uniformly from the circle, and then run DaC on the data points. The circle is divided into $m$ evenly-sized sub-regions.  The $n$ is set to 100, 200, 400, 800, and 1600, and $m$ is set to 4, 16, 64, 256, and 1024 for the Representation Rips merge method. The $m$ is the total number of sub-regions, but not the number of potential-features; the total number of potential features for each $m$ are roughly 4, 8, 16, 32, and 64, respectively; these numbers can be used to compute the (reduced) average memory of the proposed DaC. For example, if $m=16$ the memory is reduced from $\Omega(n^5)$ to $\Omega\left(\max\{n^5/8^5,8^5\}\right)$.
The average bottleneck distance of the Representative Merge method is displayed in Table \ref{table:bottleneck distance} with the corresponding recovery rates in Table \ref{table:recovery}.

From Theorem \ref{thm:informal}, the scaling between the error $\delta$ and the sample size $n$ is $\delta\sim 1/n$ when $k=1$ to guarantee a high recovery probability; $\delta$ can be thought of as the upper-bound on the bottleneck distance.  From Table \ref{table:bottleneck distance}, the empirical scaling between $\delta$ and $n$ is close to, or slightly faster than, the theoretical scaling (linear rate).
\begin{table}[h!]
\centering
\begin{tabular}{l|r|r|r|r|r} 
\hline
 &  m=4 &  m=16 & m=64 &m=256 & m=1024  \\
\hline
n=100 & 0.014 (0.021) & 0.112 (0.064) & 0.235 (0.129) & $\diagup$ & $\diagup$ \\
\hline
n=200 & 0.007 (0.013) & 0.023 (0.030) & 0.095 (0.043) & 0.117 (0.045) & 0.157 (0.077) \\
\hline
n=400 & 0.002 (0.005) & 0.004 (0.007) & 0.028 (0.021) & 0.043 (0.022) & 0.073 (0.018) \\
\hline
n=800 & 0.001 (0.001) & 0.002 (0.004)  & 0.007 (0.009) & 0.013 (0.012) & 0.026 (0.011) \\
\hline
n=1600 & 0.000 (0.001) & 0.000 (0.001) &  0.001 (0.003) & 0.004 (0.005) &  0.008 (0.005)\\
\hline
\end{tabular}
\caption{Uniformly generated data points from a one-dimensional circle with radius $1$. The average bottleneck distance (based on 100 iid repetitions) between the DaC persistence diagram using the Representative Rips Merge method and the true persistence diagram, and their standard deviation within the brackets. When $n=100$, DaC persistence diagrams are not computed for $m=256$ or $1024$ because the sample size is too small to distribute into the sub-regions.}
\label{table:bottleneck distance}
\end{table}

\begin{table}
\centering
\begin{tabular}{l|r|r|r|r|r} 
\hline
 &  m=4 &  m=16 & m=64 &m=256 & m=1024 \\
\hline
n=100 & 100\% & 98\% & 63\%  & $\diagup$ &$\diagup$ \\
\hline
n=200 & 100\% & 100\% & 100\% & 100\% & 85\% \\
\hline
n=400 & 100\% & 100\% & 100\% & 100\% & 100\%\\
\hline
n=800 & 100\% & 100\% & 100\% & 100\% & 100\% \\
\hline
n=1600 & 100\% & 100\% & 100\% & 100\% & 100\% \\
\hline
\end{tabular}
\caption{The recovery rates of  uniformly generated data points from a one-dimensional circle with radius 1 using the Representative Rips merge method, based on 100 iid realizations of the data.}
\label{table:recovery}
\end{table}

\subsubsection{Projected Merge method} \label{sec:eval_pmm}
The Projected Merge method is evaluated with $m=2$ for equal-sized sub-regions. We repeat the same data generation setup as in Section~\ref{sec:Representative Rips Merge method}. In this scenario, the topological projection introduced in Section~\ref{sec:method:combination method} reduces to an orthogonal projection. When $n=100,200,400,800,$ and $1600$, the recovery rates are $100\%$ with bottleneck distances of 0.007, 0.003, 0.001, 0.000, 0.000, respectively. 

\subsubsection{Two-Dimensional Sphere}
As a demonstration of DaC in the three-dimensional setting, a two-dimensional sphere dataset is considered using the Representative Rips Merge method. The sample size is fixed at $n=1200$, and 100 iid repetitions of the data are drawn uniformly from a radius-$1$ two-dimensional sphere. The total region is partitioned into $m=3^3$, $4^3$, $5^3$, and $6^3$ equal-sized sub-regions. The recovery probability and the accuracy are displayed in Table \ref{table:sphere-accuracy}. The running memory limit is restricted to $10$ GB. 

As shown in Table \ref{table:sphere-accuracy}, the recovery rate ranges from 75\% to 90\% due to the small sample size.\footnote{We attempted to use a sample size of $n=1600$, but could not compute the true persistence diagram (using the full dataset) up to homology dimension two, even on a high-performance computing cluster with $100$ GB memory limit or with Ripser.  This highlights the computational challenges of computing persistence diagrams using a Rips filtration.} As $m$ increases, the average bottleneck distance becomes larger as expected. In this example, the R TDA package requires too much memory to produce the true diagram, so we utilize Ripser \citep{bauer2021ripser} instead.

\begin{table}[h!]
\centering
\begin{tabular}{|l|r|r|r|r|} 
\hline
 &  $m=3^3$ &  $m=4^3$ & $m=5^3$ &$m=6^3$   \\
\hline
Recovery Probability& 90\% & 85\% & 75\% &79\%\\
\hline
Average bottleneck distance& 0.117 (0.177)& 0.203 (0.233) & 0.225 (0.251) & 0.389 (0.238) \\
\hline
\end{tabular}
\caption{We sample $1200$ points uniformly from a radius-$1$ two-dimensional sphere, with the goal of identifying the $H_2$ feature. The empirical recovery rate and average bottleneck distance (and standard deviation in parentheses) between the DaC persistence diagram and the true persistence diagram are shown, based on 100 iid repetitions of the data.}
\label{table:sphere-accuracy}
\end{table}

\section{Wisconsin Lake Data}\label{sec:app}
Lakes are an important natural resource studied by limnologists.
\cite{garn2003study} highlights studies on lakes in Wisconsin, and notes that the spatial distribution of lakes in the state is particularly unique:  northern Wisconsin contains some of ``the densest clusters of lakes found anywhere in the world'', which they attribute to glaciers from 10,000 years ago.
In this section, we investigate the spatial distribution of lakes in Wisconsin using persistent homology, which can be used to summarize the spatial distribution for visualization or comparison between other states or regions.  Here, we focus on computing a persistence diagram for a dataset containing 16,683 lakes in Wisconsin; see Figure \ref{fig:lake_data}.

The proposed DaC method is used to estimate the persistence diagram of the Wisconsin lake data, which are freely available through the Wisconsin Department of Natural Resources\footnote{The data is derived from \url{https://apps.dnr.wi.gov/lakes/lakepages/Results.aspx}.}. The full dataset includes 16,711 lakes, but some preprocessing was required, as follows.  Thirteen lakes with positive longitudes were corrected to be negative.  Forty-three lakes had latitude and longitudes of zero, of which 15 could be identified using their Waterbody ID Code on the Wisconsin Department of Natural Resources website's ``Find a Lake'' site\footnote{\url{https://apps.dnr.wi.gov/lakes/lakepages/Search.aspx}} and observed on a map using their updated latitude and longitude coordinates; the other 28 lakes (0.17\% of total) are excluded from our analysis resulting in 16,683.  We did not set a minimum lake size, though 3,319 of the 16,683 lakes (19.9\%) have a size below one acre and may more appropriately be considered ponds.
In summary, the latitude and longitude coordinates of 16,683 lakes in Wisconsin were used in this analysis.

The aim is to recover the $H_1$ features from the spatial distribution of lakes. Unfortunately, due to the large sample size, the classical persistent homology algorithm cannot directly run on these data with a 100 GB memory limitation. Instead, the proposed DaC method is employed with $16\times 16$ uniform-area sub-regions (such that some sub-regions extended beyond the boundary of the state) using the Representative Rips Merge method. Since each sub-region is small, the smallest birth time among the merged sub-features and the largest death time among merged sub-features are used as the birth and death time estimates, respectively. We remark that this can be slightly conservative compared to the true estimate, but the error is small since $m$ is large. The locations of the Wisconsin lakes are displayed in Figure \ref{fig:lake_data} and the DaC persistence diagram is shown in Figure \ref{fig:lake_PD}. The DaC persistence diagram detected 2,346 $H_1$ features, including 234 merged $H_1$ features.
\begin{figure}[htbp]
		\par\medskip
		\centering
		\begin{subfigure}[t]{0.4\textwidth}
			\centering
    	        \includegraphics[width=5cm]{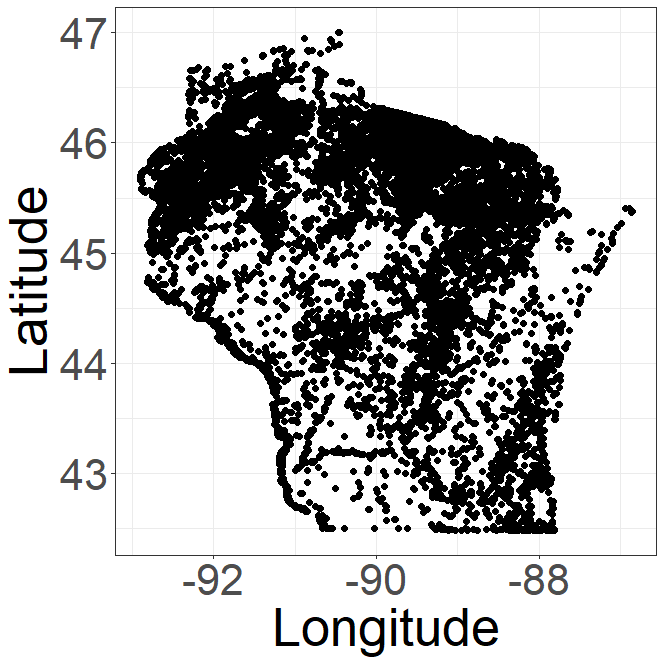}
			\caption{WI Lake Locations}
			\label{fig:lake_data}
			\end{subfigure}
		\begin{subfigure}[t]{0.4\textwidth}
			\centering
			\includegraphics[width=5cm]{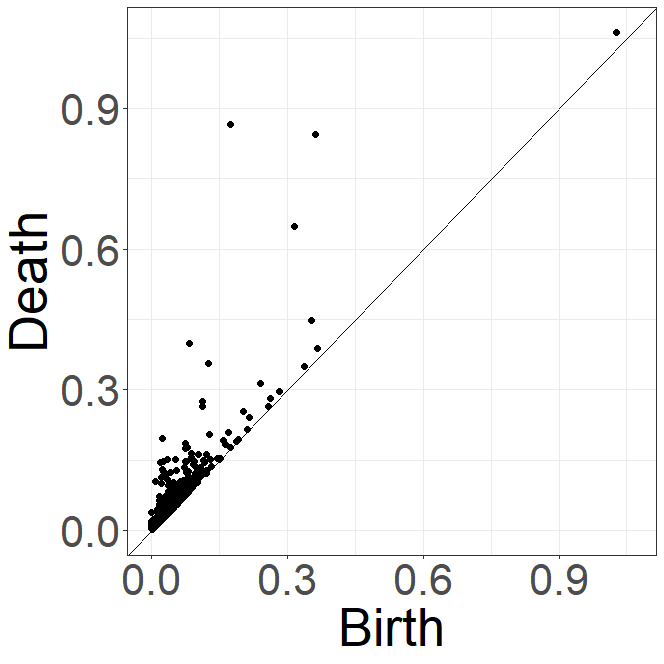}
			\caption{DaC $H_1$ Persistence Diagram}
			\label{fig:lake_PD}
		\end{subfigure}
		\caption{Wisconsin Lakes.  (a) Each black point represents one of the 16,683 Wisconsin lakes used to compute the DaC persistence diagram displayed in (b).  (b) The DaC (merged) $H_1$-persistence diagram of the Wisconsin Lakes data.  There are 2,346 $H_1$ features detected, including 234 merged features. }
	\end{figure}

\section{Discussion and Concluding Remarks} \label{sec:conclusion}

Persistent homology is useful for inference, prediction, and visualization in many applications.  However, the computation of a persistence diagram, especially using a Rips filtration, is memory-intensive which limits its applicability to even moderately-sized datasets.  To help mitigate these computational limitations, a DaC method is proposed and justified empirically and theoretically for the Rips filtration setting. Under some assumptions on the underlying manifold, we provide an explicit trade-off between the computational complexity and memory and the accuracy of the estimated birth and death times. When the estimation error $\delta=cm^{-\frac{1}{k}}$, with $m\sim n/\log(n)$, the computational complexity scales linearly as $n$ increases while the recovery probability of the underlying feature scales $1-1/n$; with $m$ scaling as a constant, the computational complexity of DaC scales in constant time better than the standard persistent homology while the recovery probability scales $1-\exp(-n)$. This theoretical trade-off makes the proposed DaC practical for applications. Furthermore, we verify that the empirical rate matches the theoretical rate for the bottleneck distance between the estimated and true birth and death times. The proposed DaC method was empirically found to scale better in terms of memory and computational complexity than the $k$-means++ cluster-based algorithm in the settings considered. Finally, we estimated a persistence diagram using DaC for a Wisconsin Lakes dataset in which the traditional persistence diagram could not be computed due to memory limitations.

A benefit of the proposed DaC is that small, medium, and large-scale features can be recovered.  We present a general divide-and-conquer framework with two options for merging potential-features, though these merging approaches can be improved. First, the Projected Merge method  is currently only suitable for small $m$, so an algorithm for this topological projection method for large $m$ is still needed.  Second, the point-wise birth and death time estimates are not theoretically justified given the merging representative data points, but the empirical performance is reasonable. 
A general method to estimate the birth and death times of a feature directly from a representative sample without recomputing the filtration for a general feature (i.e., not assuming a $k$-sphere as in Section~\ref{sec:method:estimate recovery}) remains an open problem.
Addressing these open challenges is important in understanding the merging process among the sub-features.

\bibliographystyle{unsrtnat}
\bibliography{references}  

\newpage

\appendix
\section{Introduction}

For convenience, common notation is displayed in Table~\ref{tab:notation}.

\begin{table}[H]
\centering
\begin{tabular}{|c|c|}
\hline
{\bf Description} & {\bf Notation} \\
\hline
Data & $Y_{1:n}=\{y_i\}_{i=1}^n \in \R^D$ \\
\hline
True diagram &  $\True{D}$ with $\true{m}$ features \\
\hline
Diagram partition $l$ & $\mathbf{D}^l$ with $m_l$ features \\
\hline
Representative data of $({r}_{j}^l, {b}_{j}^l, {d}_{j}^l)$ & $\widetilde{Y}_{j,l}=\{y^l_j(i)\}_{i\in[n_{j}^l]}$ \\
\hline
Euclidean distance & $\rho(x,y)$ \\
\hline
Geodesic distance & $\rho_G(x,y)$ \\
\hline
Distance between data point and boundary & $\dist(y,\mathcal B)\defeq \min_{y\in\mathcal B} \rho(y,x)$ \\
\hline
Euclidean Diameter of a set $S$ & $\diameter(S)\defeq \max_{x,y\in S} \rho(x,y)$ \\
\hline
Birth and death estimates for $l^\mathrm{th}$ partition & $\hb^l$, $\hd^l$ \\
\hline
DaC birth and death estimates & $\hb$, $\hd$ \\
\hline
$\Delta^m$ & $m^{\mathrm{th}}$ dimensional simplex \\
\hline
$[n]$ & $1,2, \ldots, n$ \\
\hline
\end{tabular}
\caption{Common notation used in technical results. }
\label{tab:notation}
\end{table}

\section{Manifold Assumptions and Data Sampling }
The theoretical properties of the proposed DaC method are investigated in the following sections. The setting where data are sampled on a single manifold is considered first, which is subsequently generalized to the setting with multiple manifolds that are sufficiently separated to avoid spurious features (i.e., spurious homology group generators). 

In this section, we focus on (i) the probability of recovering a feature, $\M$, when the data sampled on $\M$ are partitioned into $m$ sub-regions, and (ii) bounds on the resulting estimation error of the birth and death time.

Let $Y_{1:n} = \{y_i\}_{i=1}^n \in \R^{D}$ be data sampled independently from a distribution $\mu$ supported on $\M$. The distribution $\mu$ is of the form
\begin{equation}\label{eq:dmu1}
    \dd\mu = \phi\ \dd \vol_\M,
\end{equation}
for density function $\phi:\M\to \R_+$, where $\dd\vol_\M$ denotes integration with respect to the Riemannian volume form associated with $\M$. 

The following assumption is used to avoid sparse regions in the data sampling on $\M$.
\Assum*

For each sub-region $l=1,\ldots, m$, there is a corresponding persistence diagram $\D^l$ with representative data of sub-feature $j$, $({r}_{j}^l, {b}_{j}^l, {d}_{j}^l)$, denoted as $\widetilde{Y}_{j,l}=\{y^l_j(i)\}_{i\in[n_{j}^l]}$, where $n_{j}^l$ is the number of observations in partition $l$ representing sub-feature $j$.
Let $\true{b}$ and $\true{d}$ be the birth and death time from $\True{D}$ (i.e., the persistence diagram using the full data).

Furthermore, let $\M$ be a smooth, compact, connected manifold without boundary with positive curvature almost everywhere and is isomorphic to a $\mo$-sphere; see  \cite{fefferman2016testing} for details on these manifold assumptions and \cite{JMLR:v16:chazal15a} for statistical analysis of TDA methods under manifold assumptions. The \textit{surface area} of $\M$ is denoted as $a_\M$, an upper bound on the absolute value of the \textit{sectional curvature} of $\M$ is $S_\M$, the \textit{reach} of $\M$ is $R_\M$, and a lower bound on the  \textit{injectivity radius} of $\M$ is $i_\M$. Suppose $y\in\M$, let $\T_y \M$ represent the tangent plane of $\M$ at $y$.
When the curvature of $\M$ is arbitrary, $\M$ may have multiple features; for example, see the eyeglass illustration in Figure 8 of \cite{Fasy_2014}.  With $n\to\infty$, these ``extra'' features have positive birth times. 
With our curvature restrictions on $\M$, finite sampling can also result in multiple features due to noise, but with infinite sampling, $\M$ has a single feature with a birth time of zero. In Section~\ref{sec:multiple}, the case with multiple features is considered.  Throughout, we use $C>1$, $0<c<1$ to denote constants that only depend on embedding dimension $\mo, \phi$, and the geometric properties of the manifold $\mathcal{M}$, such as the injectivity radius, reach, or sectional curvature.

Assumption \ref{data} guarantees that the sample size is sufficiently large in each sub-region so that every potential-feature can be recovered as $n\to\infty$ using the proposed DaC method. 
On the other hand, if we only guarantee the sub-regions have similar sizes, a data-independent partition sampling assumption (Assumption \ref{data-independent}) is an alternative option. Given Assumption \ref{data-independent}, we need to consider appropriate concentration inequalities to quantify the number of sub-regions that can be recovered.
\begin{assumption}[Data-dependent partition sampling] \label{data}
The number of data points in partition $l$, $n_{\M^{(l)}}$, is such that $n_{\M^{(l)}} \geq n c_l \geq n c_{\min } >0$, for some constant $0< c_l < 1$ where $c_{\min}\defeq\min_{l}\{c_l\}=\Omega(1/m)$\footnote{Where $b(n)=\Omega(a(n))$ means $\lim_{n\rightarrow \infty} b(n)/a(n) = c$, for some constant $c>0$.}.
\end{assumption}
Assumption~\ref{data} is a data-dependent assumption. We can consider the following data-independent assumption: 
\begin{assumption}[Data-independent partition sampling] \label{data-independent}
For a partition $\M^{(l)}$ of $\M$, 
\begin{align}\label{eq-assum:data_alternative}
    p_l \defeq \int_{y\in \M^{(l)}}\phi(y)\ \dd\vol_\M\ge c_l\ge\Omega(1/m),
\end{align}
so that $p_l$ can be thought of as the probability of a data point appearing in sub-region $l$.
\end{assumption}

With Assumption~\ref{data-independent}, consider the following McDiarmid's inequality, which provides a probability bound on observing enough data points within each sub-region.
\begin{proposition}[McDiarmid's inequality]\label{prop:McDiarmid's inequality}
 Let $p \in \Delta^m$ and $\widehat{p}_l \sim \frac{1}{n} \operatorname{Binomial}(n, p_l)$. Then, for any $\wideeps>0$ and $l\in[m]$,
\begin{align}
    \prob\left(\left|n\widehat{p}_l-np_l\right| \geq \wideeps\right) \leq 2 \exp \left( \frac{-2\wideeps^2}{n}\right).
\end{align}
\end{proposition}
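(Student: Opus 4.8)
The plan is to recognize that this proposition is simply the two-sided Hoeffding/bounded-differences bound applied to a scaled binomial, and to either invoke McDiarmid's inequality directly or reprove it from the exponential-moment method. First I would realize $S \defeq n\widehat{p}_l$ as a sum $S = \sum_{i=1}^n X_i$ of $n$ independent Bernoulli$(p_l)$ random variables --- this is exactly what $n\widehat{p}_l \sim \operatorname{Binomial}(n,p_l)$ means --- so that each $X_i \in \{0,1\} \subset [0,1]$ and $\E[S] = np_l$. The claimed inequality is then precisely the concentration of $S$ about its mean.

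For the direct route, view $S = f(X_1,\dots,X_n)$ as a function of the independent coordinates $X_i$. Altering a single coordinate changes $f$ by at most $1$, so the bounded-differences constants are $c_i = 1$ and $\sum_{i=1}^n c_i^2 = n$. McDiarmid's (bounded-differences) inequality then gives $\prob(|S - \E S| \ge \wideeps) \le 2\exp\!\big(-2\wideeps^2/\sum_i c_i^2\big) = 2\exp(-2\wideeps^2/n)$, which is the stated bound.

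Alternatively, I would prove it from first principles by the Chernoff method so that the sharp constant is transparent. By Markov's inequality, $\prob(S - \E S \ge \wideeps) \le e^{-\lambda\wideeps}\,\E[e^{\lambda(S-\E S)}]$ for every $\lambda > 0$; independence factors the moment generating function, and Hoeffding's lemma applied to each centred summand $X_i - p_l \in [-p_l, 1-p_l]$ (range exactly $1$, hence variance proxy $1/4$) yields $\E[e^{\lambda(X_i - p_l)}] \le e^{\lambda^2/8}$, so that $\E[e^{\lambda(S-\E S)}] \le e^{n\lambda^2/8}$. Optimizing at $\lambda = 4\wideeps/n$ gives $\prob(S - \E S \ge \wideeps) \le e^{-2\wideeps^2/n}$; the same bound holds for the lower tail by symmetry, and a union bound over the two tails supplies the prefactor $2$.

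There is no substantive obstacle here, since this is a textbook concentration estimate. The only points deserving care are identifying the range of each summand as exactly $1$ (not $2$) --- this is what produces the sharp constant $2$ in the exponent through Hoeffding's lemma's variance proxy $(b-a)^2/4 = 1/4$ --- and the symmetrization that yields the factor $2$ in front. Everything else is routine.
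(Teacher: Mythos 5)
Your proposal is correct and matches the paper's (implicit) approach: the paper states this proposition without proof, simply invoking the standard McDiarmid/Hoeffding bound for a Binomial$(n,p_l)$ sum of Bernoulli variables, which is exactly your first route with bounded-differences constants $c_i=1$. Your Chernoff-method derivation with Hoeffding's lemma is also correct and correctly accounts for the range of each summand being $1$, yielding the constant $2$ in the exponent.
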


Proposition \ref{prop:McDiarmid's inequality} guarantees that with probability at least $1-2m\exp\left(\frac{-2\wideeps^2}{n}\right)$, for any $l$, $n p_l-\wideeps \leq n_{\M^{(l)}} \leq n p_l+\wideeps$. To ensure this probability close to one, then 
\begin{equation}\label{eq:eq1}
    \wideeps\gtrsim\sqrt{n}.
\end{equation}
To ensure there is a sufficient number of data points in each sub-region, we require 
\begin{equation}\label{eq:eq2}
    n p_l \gtrsim \wideeps.
\end{equation} 
If $p_l\ge \Omega(1/m)$, the Equations~\eqref{eq:eq1} and \eqref{eq:eq2} imply that  $m$ is upper bounded by $\OO(\sqrt{n})$. This scaling is not consistent with the scaling needed to achieve the linearized computational complexity for the DaC method discussed in section \ref{sec:memory}. 

 As the union bound from Proposition~\ref{prop:McDiarmid's inequality} does not have a reasonable scaling of $m$ with $n$ using Assumption~\ref{data-independent}, we consider a different approach to bound the probabilities associated with partition sampling.
 Let the number of points follow a multinomial distribution with probability parameters $p = (p_1, \ldots, p_m)$, as defined in Assumption~\ref{data-independent}, with estimates $\hat{p}_l=n_l/n$.  Then the following proposition adapted from Theorem 2.1 in \cite{weissman2003inequalities} holds: 
 \begin{proposition} \label{prop:multinomial}
 Let $p \in \Delta^m$ and $\widehat{p} \sim 1/n \operatorname{Multinomial}(n, p)$. Then, for any $m \geq 2$ and $\wideeps \in[0,1]$,
\begin{equation}\label{eq:l1bound}
\prob\left(\sum_{l=1}^m\left|p_l-\hat{p}_l\right| \geq \wideeps\right) 
\leq\left(2^m-2\right) e^{-n  \wideeps^2 / 2}.
\end{equation}
 \end{proposition}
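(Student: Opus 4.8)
The plan is to reduce the $\ell_1$ deviation $\sum_{l=1}^m |p_l - \hat{p}_l|$ to a one-sided deviation maximized over subsets of $[m]$, and then apply a union bound together with a standard Hoeffding estimate for each subset. First I would exploit the fact that both $p$ and $\hat{p}$ are probability vectors, so that $\sum_{l=1}^m (\hat{p}_l - p_l) = 0$. Taking $A = \{l : \hat{p}_l \ge p_l\}$ and writing $p_A = \sum_{l\in A} p_l$ and $\hat{p}_A = \sum_{l \in A}\hat{p}_l$, the positive and negative parts of the deviation balance, which gives the identity
\[
\sum_{l=1}^m |p_l - \hat{p}_l| = 2(\hat{p}_A - p_A) = 2\max_{B \subseteq [m]}(\hat{p}_B - p_B).
\]
This is the key combinatorial step: the $\ell_1$ distance is twice the total variation distance, and the latter is attained by grouping coordinates according to the sign of $\hat p_l - p_l$.

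Consequently, the event $\{\sum_l |p_l - \hat{p}_l| \ge \wideeps\}$ is contained in $\bigcup_{B}\{\hat{p}_B - p_B \ge \wideeps/2\}$, where $B$ ranges over subsets of $[m]$. The two trivial choices $B = \emptyset$ and $B = [m]$ both yield $\hat{p}_B - p_B = 0$ and can be discarded, leaving exactly $2^m - 2$ events; this is what produces the prefactor $2^m-2$ rather than $2^m$.

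Next, for each fixed $B$ I would observe that aggregating multinomial categories preserves the binomial structure, i.e. $n\hat{p}_B \sim \mathrm{Binomial}(n, p_B)$, so $\hat{p}_B$ is an average of $n$ i.i.d.\ Bernoulli$(p_B)$ random variables taking values in $[0,1]$. Hoeffding's inequality then yields
\[
\prob\!\left(\hat{p}_B - p_B \ge \tfrac{\wideeps}{2}\right) \le \exp\!\left(-2n\,(\wideeps/2)^2\right) = \exp\!\left(-n\wideeps^2/2\right),
\]
where the restriction $\wideeps \in [0,1]$ keeps $\wideeps/2$ in the admissible range. A union bound over the $2^m - 2$ nontrivial subsets then delivers the claimed estimate $(2^m - 2)\exp(-n\wideeps^2/2)$.

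I expect the only non-routine part to be the first step, namely the identity expressing the $\ell_1$ deviation as twice a one-sided deviation maximized over subsets, since the remainder is a direct application of Hoeffding's inequality to the binomial marginals of the aggregated counts followed by a union bound. The minor bookkeeping obstacles are verifying that the maximizing subset is precisely the sign set $A$ (which follows from the zero-sum constraint) and discarding the two degenerate subsets to obtain the sharp combinatorial factor.
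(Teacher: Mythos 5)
Your proposal is correct. Note, however, that the paper does not actually prove this proposition: it simply states it as an adaptation of Theorem 2.1 of Weissman et al.\ (2003) and relies on the citation. What you have written is, in effect, the standard proof of that cited result (the Bretagnolle--Huber--Carol-type argument): the identity $\sum_{l}|p_l-\hat p_l| = 2\max_{B\subseteq[m]}(\hat p_B - p_B)$, valid because both vectors sum to one; the observation that the maximizing set is the sign set $A=\{l:\hat p_l\ge p_l\}$, which for $\wideeps>0$ is neither $\emptyset$ nor $[m]$, giving the sharp count $2^m-2$; the binomial aggregation $n\hat p_B\sim\mathrm{Binomial}(n,p_B)$; and Hoeffding's bound $\exp(-2n(\wideeps/2)^2)=\exp(-n\wideeps^2/2)$ for each subset, followed by a union bound. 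All steps are sound (the case $\wideeps=0$ is trivially covered since the right-hand side then exceeds one). So your write-up supplies a self-contained proof where the paper gives only a pointer to the literature; the only thing the citation ``buys'' that your argument does not is the sharper exponent $\exp(-n\varphi(\pi_P)\wideeps^2/4)$ also proved by Weissman et al., which the paper does not use.
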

When $p_l=\Omega(1/m)$, in order to have reasonable control on the lower bound of all $\hat{p}_l$ in Assumption~\ref{data-independent} with high probability, it requires $m\lesssim \smallO(n^{\frac{1}{3}})$. This dependency of the error probability rate on $m$ in Equation~\eqref{eq:l1bound} seems unavoidable, as discussed in \cite{qian2020concentration} about various multinomial distribution inequalities. Therefore, we need to be more careful with the rate of $m$ with respect to $n$.
Under the case that $m=\Omega(n/\log n)$, the DaC method recovers a linear computational complexity. However, a direct application of Equation~\eqref{eq:l1bound} does not guarantee a uniform bound under the data-independent Assumption~\ref{data-independent} because the error can increase substantially. Fortunately, the $l_1$ bound from Equation~\eqref{eq:l1bound} guarantees only a small number of missing sub-regions and therefore controls the error. 
For example, suppose we define the number of points within each sub-region less than $\frac{n c_{\min} }{2}$ to be \textit{unrecoverable} and greater than $\frac{n c_{\min} }{2}$ to be \textit{recoverable}. (The constant $\frac{1}{2}$ is not sharp and can be changed to any positive constant smaller than $1$.) When $\sum_{l=1}^m\left|p_l-\hat{p}_l\right|< \wideeps$, there are at most $2\wideeps/c_{\min} \approx 2m\wideeps$ unrecoverable sub-regions. Otherwise, there must be at  least $2\wideeps/c_{\min}$ sub-regions such that $\hat{p}_l< c_{\min}/2$, and thus $\sum_{l=1}^m\left|p_l-\hat{p}_l\right|>\frac{c_{\min}}{2}\frac{2\wideeps}{c_{\min}}=\wideeps$, which is a contradiction.

If $m=n/(4\log n)$ and $\wideeps=1/\sqrt{\log n}$, then 
\begin{align*}
 \prob\left(\sum_{l=1}^m\left|p_l-\hat{p}_l\right| \geq \wideeps \right) &\stackrel{\eqref{eq:l1bound}}{\leq} \exp\left(m-n\wideeps^2 /2\right)\\
    &\stackrel{m=\frac{n}{4\log n}}{=} \exp\left(\frac{n}{4\log n} -n\wideeps^2/2\right)\\
    &\stackrel{\wideeps=1/\sqrt{\log n}}{=} \exp\left(-\frac{n}{4\log n}\right).
\end{align*}
Though at a slow rate, $\wideeps$ converges to $0$, and the probability also converges to $0$. This gives a probabilistic guarantee to the case $m\sim n/\log(n)$ using the distribution-independent assumption (Assumption \ref{data-independent}). For conciseness, we generally use Assumption \ref{data} in the following proof, unless otherwise specified. 


The next assumption is about the design of the partition for the DaC to avoid the partition boundary masking parts of the manifold.  In particular, the boundary cannot be tangent to $\M$.
	\begin{assumption}[Manifold-Boundary angle]\label{assum:manifold angle}
		For $\M$ and $\B$, we assume:
		
		\begin{enumerate}
			\item The intersection $\M \cap \B$ is either the empty set or finitely many smooth, connected manifolds of dimension strictly smaller than $\mo$.
			\item For every point $y$ in $\M\cap \B$,
			\begin{equation}
			\label{eqn:AngleConstraint}
			\sup_{v\in \T_y(\M\cap\B)^{\perp_\M}, \widetilde{v}\in \T_y(\M\cap\B)^{\perp_\B}} |\angle(v,\widetilde{v})-\frac{\pi}{2}|\leq\beta,
			\end{equation}
			for some fixed $\beta$ strictly smaller than $\frac{\pi}{2}$.  In the above, $\angle(v,\widetilde{v})=\arccos \frac{\langle v, \widetilde{v}\rangle}{|v|\cdot |\widetilde{v}|}$, and $\T_y(\M\cap\B)^{\perp_\M}$ denotes the orthogonal complement of $\T_y(\M\cap\B)$ in $\T_y\M$; $\T_y(\M\cap\B)^{\perp_\B}$ is defined analogously. See an illustration in Figure~\ref{fig:assumption1}.
		\end{enumerate}
		\begin{figure}
		\centering
		\includegraphics[scale=0.5]{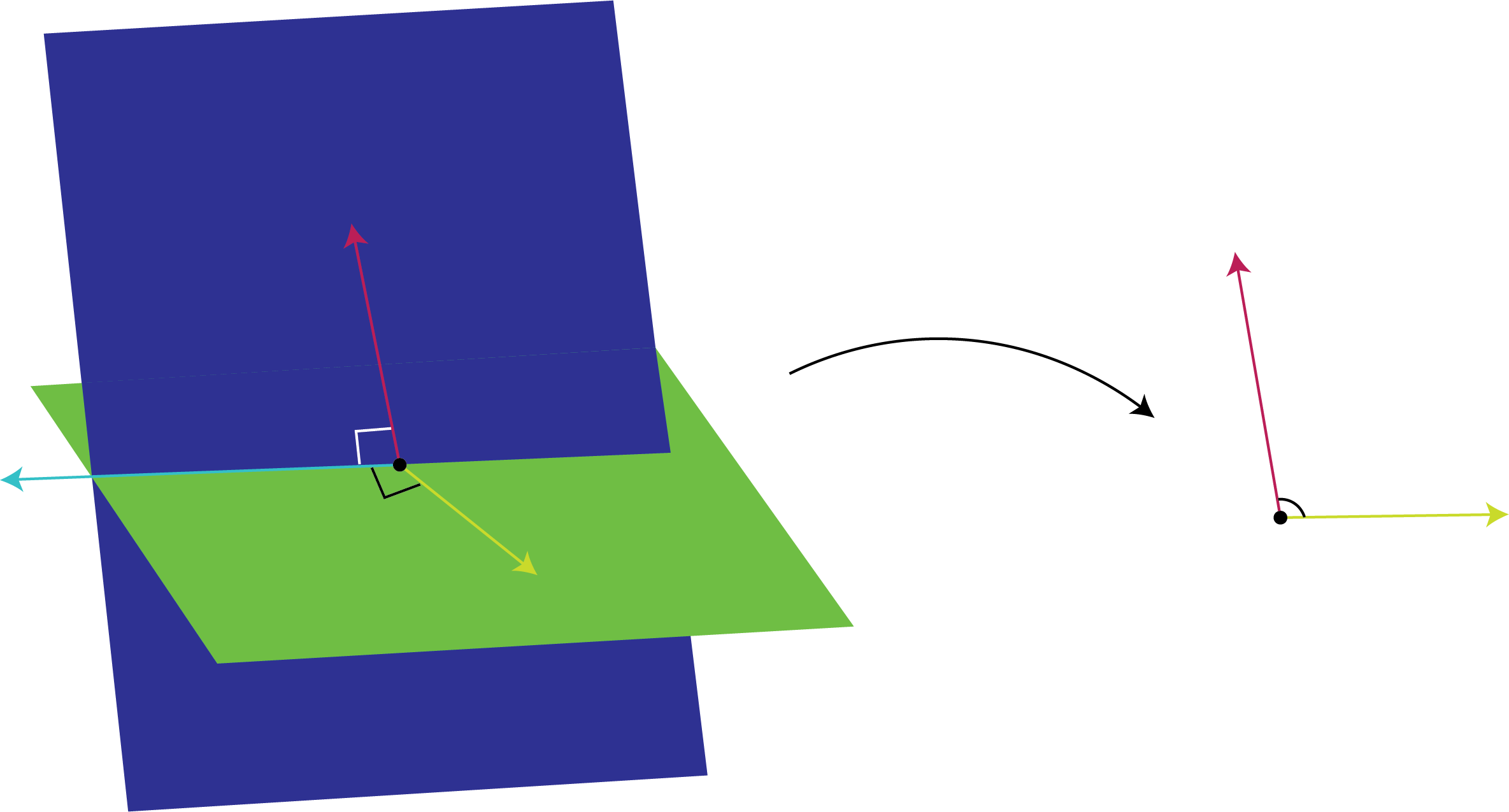}
		\put(-150,50){$\B$}
		\put(-240,170){$\T_y\M$}
		\put(12,54){$\T_y(\M\cap\B)^{\perp_\B}$}
		\put(-80,120){$\T_y(\M\cap\B)^{\perp_\M}$}
		\put(-350,60){$\T_y(\M\cap\B)$}
    \put(-200,60){$y$}
    \put(-54,48){$y$}
		\caption{Manifold-Boundary angle assumption illustration.  $\T_y(\M\cap\B)^{\perp \M}$ (red arrow) is the complement of $\T_y\M$ (blue surface) to $\T_y(\M\cap\B)$ (blue arrow), and  $\T_y(\M\cap\B)^{\perp \B}$ (green arrow) is the complement of $\B$ (green surface) to $\T_y(\M\cap\B)$ (blue arrow). We restrict the angle between $\T_y(\M\cap\B)^{\perp \M}$ and $\T_y(\M\cap\B)^{\perp \B}$ to not be too small.}
		\label{fig:assumption1}
	    \end{figure}
	\end{assumption}

The Manifold-Boundary angle assumption prevents the case where $\B$ is tangent to $\M$ where $\B$ can mask portions of data sampled on $\M$, which may prevent the recovery of a feature. This possibility of masking data due to the boundary tangent to the manifold is a limitation of the DaC method. However, the data partition $\B^{(l)}$ may be defined to be orthogonal to $\M^{(l)}$ to mitigate this issue.  

The following assumes the estimation error $\delta$ cannot be too large (or the sample size $n$ cannot be too small) compared to some intrinsic properties of the manifold so that locally the manifold behaves like a Euclidean space.
\begin{assumption} [Local Error Bound]\label{assum:error bound} The local error bound, $\delta>0$, satisfies
\begin{equation}
\frac{2\delta}{\cos\beta}+\frac{C\delta^2}{\cos^2\beta}\le\min\left\{1,\frac{R_\M}{2},i_\M,
 S_\M^{-\frac{1}{2}}\right\}. \label{eq:local_error_bound}
\end{equation}
\end{assumption}
The upper bound terms $\min\{i_\M,S_\M^{-\frac{1}{2}}\}$ guarantees that if $x\in\M$, then the exponential map 
\begin{equation} \label{expmap}
    \exp_{x}: \T_{x}\mathcal{M} \to \mathcal{M}
\end{equation}
is a diffeomorphism. Likewise, its inverse, the logarithm map,
\begin{equation} \label{logmap}
    \log_{x}: \mathcal{M} \to \T_{x}\mathcal{M}
\end{equation}
is a diffeomorphism. The upper bound of the reach is used to guarantee the uniqueness of the closest point of $x\in\B$ to $\M$. Assumption~\ref{assum:error bound} helps to control the difference between the geodesic and Euclidean distances, which is widely adopted in manifold learning (e.g., \citealt{trillos2021large}).

\section{Riemannian Geometry Overview}
An overview of some relevant topics of differential geometry is provided in this section. For a more comprehensive introduction, see \citet{lee2003introduction,lee2018introduction}.

Let $B_{\mathcal{M}}(x, r) \subset \mathcal{M}$ denote a geodesic ball in $\mathcal{M}$ of radius $r$ centered at $x$, and $B(x, r)$ denote a Euclidean ball in $\R^d$. For each $x \in \mathcal{M}$, $\log_x: \M \rightarrow \T_x\M$ is the Riemannian logarithmic map. For any $0<r<\min \left\{i_\M, S_\M^{-1 / 2}\right\}$, $\log _x$ is a diffeomorphism between the geodesic ball $B_{\mathcal{M}}(x, r) \subset \mathcal{M}$ and the ball $B(0, r) \subset T_x \mathcal{M}$. Conversely, the Riemannian exponential map is the inverse map of the Riemannian logarithm map. 
By Proposition 2 in \cite{trillos2018error},
\begin{align}\label{eq:d_M}
    \rho(x,y) \leq \rho_G(x, y) \leq \rho(x,y)+\frac{8}{R_\M^2}\rho^3(x,y),
\end{align}
for $x,y\in\M$ such that $\rho(x,y) \leq R_\M / 2$. This can be shown with the Rauch comparison theorem.
The reach of $\M$, denoted as $R_\M$, is the largest number such that any point at a distance less than $R_\M$ from $\M$ has a unique nearest point on $\M$  \citep{federer1959curvature}.

The following Lemma guarantees that locally, a smooth manifold can be approximated as a quadratic function; see an illustration in Figure \ref{fig:quadratic function}.
\begin{lemma} \label{lem:quadbehavior}
Let $x,z\in\M$ such that $\rho(x,z)\le \min\{1,S_{\M}^{-1/2},i_{\M}\}$, and let $\log_z(x)=x'$. Then
\begin{equation} \label{quadbehavior}
    \rho(z,x')\le C\rho^2(z,x),
\end{equation}
where constant $C>1$ is related to intrinsic properties of $\M$.
\end{lemma}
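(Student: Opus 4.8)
The plan is to show that, in the affine tangent plane at $z$, the manifold point $x$ sits at a height that is quadratic in the base distance $\rho(z,x)$. First I would record that the hypothesis $\rho(z,x)\le\min\{1,S_\M^{-1/2},i_\M\}$ puts $x$ inside the radius where, by \eqref{logmap}, the map $\log_z$ is a diffeomorphism onto a neighborhood of $0$ in $\T_z\M$; hence $x'=\log_z(x)$ is a well-defined tangent vector with $|x'|=\rho_G(z,x)$. Identifying $\T_z\M$ with the affine plane $z+\T_z\M\subset\R^D$, the content of the lemma is that the embedded point $x$ deviates from its tangent-plane representative $z+x'$ by only $\OO(\rho^2(z,x))$, which is the quantity $\rho(z,x')$ records under this identification. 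Using \eqref{eq:d_M} I would first trade $\rho_G(z,x)$ for $\rho(z,x)$ at the cost of an $\OO(\rho^3)$ error, so that it suffices to bound the normal deviation in terms of the Euclidean chord length.

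For the core estimate I would invoke the reach. By definition of $R_\M$, for every unit normal $\nu$ at $z$ the open ball $B(z+R_\M\nu,R_\M)$ meets $\M$ only at $z$; choosing $\nu$ along the normal component of $x-z$ and expanding the resulting inequality $\|x-(z+R_\M\nu)\|\ge R_\M$ yields the standard bound
\begin{equation}
\dist\bigl(x,\ z+\T_z\M\bigr)\le \frac{\rho^2(z,x)}{2R_\M}.
\end{equation}
Since the tangent-plane representative $z+x'$ differs from the orthogonal projection of $x$ onto $z+\T_z\M$ only through the $\rho_G$-versus-$\rho$ discrepancy and the curving of geodesics — again an $\OO(\rho^3)$ effect controlled by \eqref{eq:d_M} and the curvature/injectivity bounds — this normal-deviation estimate delivers $\rho(z,x')\le C\rho^2(z,x)$ with $C$ depending only on $R_\M$ and, through the error terms, on $S_\M$ and $i_\M$.

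As an alternative that exhibits the quadratic coefficient explicitly, I would Taylor-expand the exponential map of \eqref{expmap}, $x=\exp_z(x')=z+x'+\tfrac12\,\mathrm{II}_z(x',x')+\OO(|x'|^3)$, where $\mathrm{II}_z$ is the second fundamental form at $z$, of operator norm at most $1/R_\M$. The linear term $x'$ is tangential and the quadratic term is normal, so $\|x-(z+x')\|\le \tfrac{1}{2R_\M}|x'|^2+\OO(|x'|^3)$, and substituting $|x'|=\rho_G(z,x)\le\rho(z,x)+\OO(\rho^3)$ gives exactly $\OO(\rho^2(z,x))$. Lemma~\ref{lem:quadbehavior} records precisely this quadratic tangency — the familiar ``manifold looks like a quadratic graph near a point'' statement illustrated in Figure~\ref{fig:quadratic function}.

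The main obstacle is not the leading quadratic term but the bookkeeping of the cubic remainders: I must ensure the $\OO(\rho^3)$ errors coming from \eqref{eq:d_M} and from the third-order term in the exponential map are uniform, with constants depending only on intrinsic data and the reach rather than on the particular points $x,z$. Compactness of $\M$ together with the bounds $S_\M$ and $i_\M$ supplies this uniformity, and the hypothesis $\rho(z,x)\le\min\{1,S_\M^{-1/2},i_\M\}$ keeps every expansion inside its radius of validity, absorbing if necessary the requirement $\rho(z,x)\le R_\M/2$ needed by \eqref{eq:d_M} into the constant $C$. Because the reach route bounds the normal deviation directly and sidesteps the remainder analysis altogether, I would lead with it and reserve the curvature expansion for making the quadratic coefficient transparent.
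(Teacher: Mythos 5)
Your proposal is correct, and it is considerably more complete than what the paper provides: the paper's entire proof is the single sentence that the lemma ``can be proved using the logarithm map's Taylor expansion from \cite{struik1961lectures},'' which is exactly your second, curvature-expansion argument ($x=\exp_z(x')=z+x'+\tfrac12\,\mathrm{II}_z(x',x')+\OO(|x'|^3)$, with the tangential linear term and normal quadratic term separated). Your lead argument via the reach --- expanding $\|x-(z+R_\M\nu)\|\ge R_\M$ to get $\dist(x,z+\T_z\M)\le \rho^2(z,x)/(2R_\M)$ --- is a genuinely different and arguably cleaner route: it bounds the normal deviation directly from Federer's definition of reach without any remainder bookkeeping, at the cost of having to separately account for the $\OO(\rho^3)$ discrepancy between the orthogonal projection of $x$ and the point $z+x'$, which you correctly flag and control via \eqref{eq:d_M} and compactness. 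The Taylor route makes the quadratic coefficient (the second fundamental form, bounded by $1/R_\M$) explicit but requires uniformity of the third-order remainder, which you also address. One point worth making explicit: as literally written the lemma bounds $\rho(z,x')$, which under the affine identification of $\T_z\M$ with $z+\T_z\M$ equals $|x'|=\rho_G(z,x)$ and is only \emph{linear} in $\rho(z,x)$; the quantity actually needed (and the one used later in the proof of Lemma~\ref{lem:masked points on Mi}, where the bound is applied to the distance between a manifold point and its own logarithm image) is $\rho(x,x')=\|x-(z+x')\|$. Your proof establishes this corrected statement, which is the one the paper relies on.
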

\noindent This lemma can be proved using the logarithm map's Taylor expansion from \cite{struik1961lectures}.

\begin{figure}
		\centering
		\includegraphics[scale=0.5]{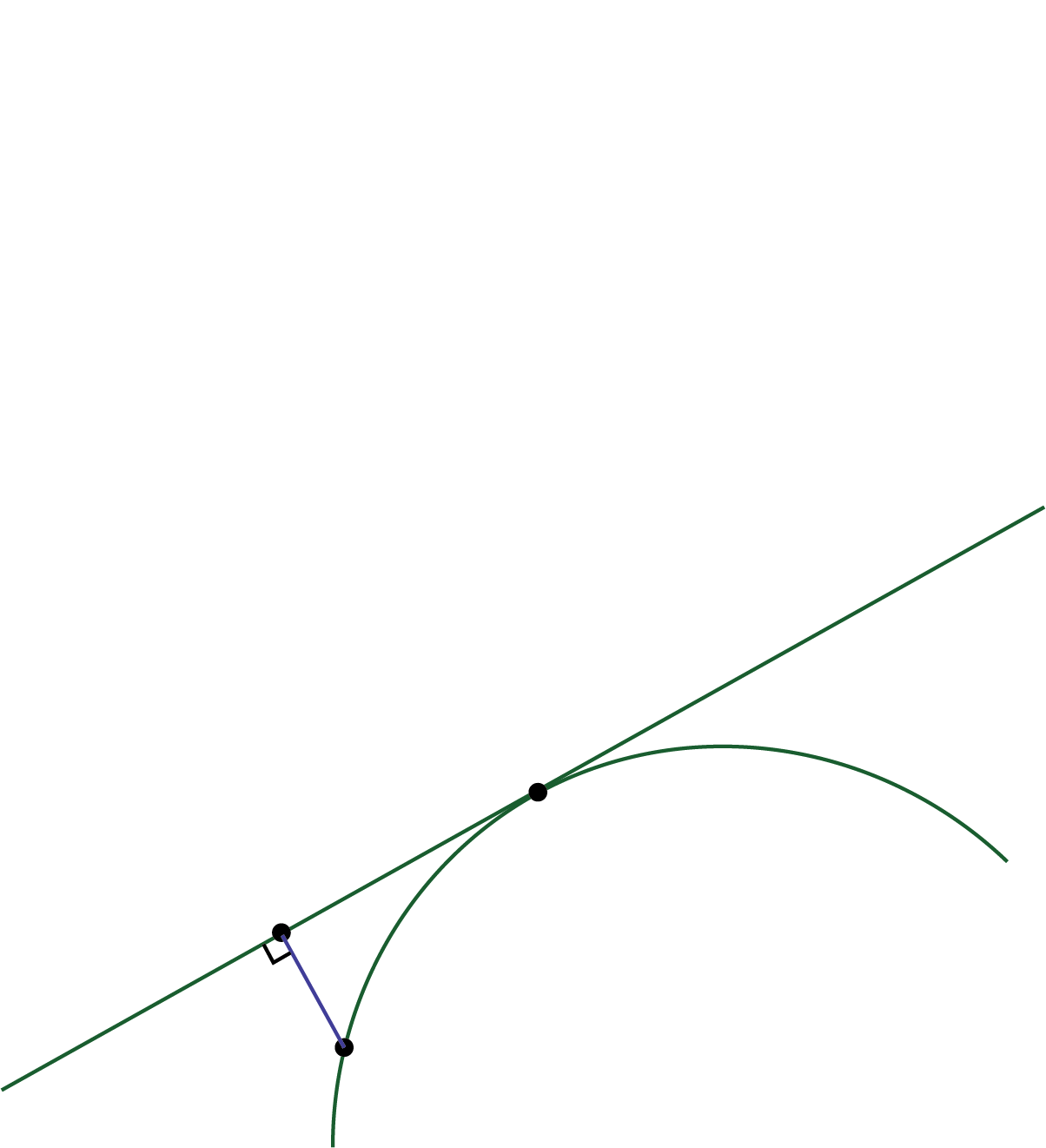}
        \put(-78,55){$x$}
	\put(-129,37){$x+\Delta x$}
        \put(-94,0){$z$}
        \put(10,40){$\M$}
        \put(0,90){$\T_x\M$}
		\caption{A manifold is locally the graph of a function $f\in C^2(\R^D)$ which can be approximated by a quadratic function around $x$, thus we have $|f(z)-f(x+\Delta x)|\approx C |\Delta x|^2$ where $C>1$ only depends on $\M$. Here we have illustrated the case where $f(x)=x$ is a scalar function.}
  \label{fig:quadratic function}
  \end{figure}

\section{Identifiability of the True Birth and Death Times}\label{sec:proof:prel}
We investigate the DaC method by considering a single feature $\M$ with its birth and death time, $b_\M$ and $d_\M$, respectively, and analyze how the estimation error in the birth and death time of the DaC persistence diagram changes as the sample size increases, $n\to \infty$.
We focus here on the case where $b_\M=0$ (e.g., $\M$ could be a loop, but not c-shaped). This is because when $b_\M>0$, the recovery of a feature with the DaC method relies on the partitioning design of $\M$; see an illustration of this issue in Figures~\ref{fig:c>0} and \ref{fig:remedyc_ge0}. Generalization to the $b_\M>0$ case requires a precise description of the partitioning of $\M$ as in Figure~\ref{fig:remedyc_ge0}. In particular, the gap in the c-shape needs to be split into distinct sub-regions as $m$ increases. This problem is similar to the multi-manifold case; see Section~\ref{sec:multiple}.

The following proposition provides bounds on $\true{b}$ and $\true{d}$, which are the birth and death times, respectively, of the feature from $\M$ when using the full dataset, $Y_{1:n}$.
\begin{sloppypar}
\begin{proposition}
\label{prop:manifold}
For error bound $\varepsilon >0$, with probability at least $1-\frac{a_\M}{ \sigma_k (\frac{\eps}{6})^{\mo} }\exp\left(-2n \sigma_k  \frac{c_\phi (\frac{\eps}{2})^{\mo}}{a_\M}\right)$, \begin{equation}
    \true{b}\le 2\eps \text{ and } |\true{d}-d_\M|\le 2\eps,
\end{equation} where
\begin{align}\label{eq:assum-eps}
    2\eps< \min\left\{1,\frac{R_\M}{2},i_\M,S_\M^{-\frac{1}{2}},\frac{d_\M-b_\M}{2}\right\},
\end{align}
and $\sigma_k$ is the volume of $k$-dimensional sphere with unit radius.
\end{proposition}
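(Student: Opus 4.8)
The plan is to reduce the proposition to two ingredients: a probabilistic \emph{density} estimate showing that the sample $Y_{1:n}$ fills up $\M$ to Hausdorff resolution $\eps$, and a deterministic \emph{stability} argument comparing the Vietoris--Rips persistence of the finite sample to that of the whole manifold $\M$. Since we are in the regime $b_\M=0$ (as fixed at the start of this section), the claim $\true{b}\le 2\eps$ and $|\true{d}-d_\M|\le 2\eps$ is exactly a statement that the sample diagram is $2\eps$-close to the ideal diagram of the $k$-sphere.

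First I would establish the density event. Fix a maximal geodesically $\frac{\eps}{3}$-separated net $x_1,\dots,x_N$ in $\M$; it is automatically $\frac{\eps}{3}$-covering, and the balls $B_\M(x_i,\eps/6)$ are pairwise disjoint. Using Assumption~\ref{assum:error bound} together with Lemma~\ref{lem:quadbehavior}, balls of radius $\le\eps$ are nearly Euclidean, so each has volume $\gtrsim \sigma_k(\eps/6)^{\mo}$ and hence $N\le a_\M/(\sigma_k(\eps/6)^{\mo})$, which is the prefactor in the stated probability. By the density lower bound $\phi\ge c_\phi$ of Assumption~\ref{assumption:probability}, a ball of radius $\eps/2$ about each net point carries mass at least of order $c_\phi\sigma_k(\eps/2)^{\mo}/a_\M$, so the probability it receives no sample point is at most $\exp\!\big(-2n\sigma_k c_\phi(\eps/2)^{\mo}/a_\M\big)$. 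A union bound over the $N$ net points then gives, with probability at least the quantity in the statement, that every such ball is nonempty; by the triangle inequality this forces every point of $\M$ to lie within $\eps$ of a sample, i.e.\ the Hausdorff distance satisfies $d_H(Y_{1:n},\M)\le\eps$, and since $Y_{1:n}\subseteq\M$ also $d_{GH}(Y_{1:n},\M)\le\eps$.

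Second, on this density event I would invoke the Gromov--Hausdorff stability of Vietoris--Rips persistence: the bottleneck distance between the VR diagrams of $Y_{1:n}$ and of $\M$ is at most twice the Gromov--Hausdorff distance, hence at most $2\eps$. Because $\M$ is isomorphic to a $\mo$-sphere with positive curvature almost everywhere, its continuous VR filtration carries a single distinguished $H_{\mo}$ generator, born at $b_\M=0$ and dying at $d_\M$. The hypothesis $2\eps<(d_\M-b_\M)/2$ in Equation~\eqref{eq:assum-eps} means this feature has persistence exceeding $4\eps$, so the optimal matching pairs it with a genuine off-diagonal feature of $\mathrm{VR}(Y_{1:n})$ rather than with the diagonal. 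Reading off the matched birth and death coordinates gives $|\true{b}-b_\M|\le 2\eps$ and $|\true{d}-d_\M|\le 2\eps$, i.e.\ the two inequalities claimed.

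The hard part will be Step~1: controlling the per-ball volume and mass on a \emph{curved} manifold so that the covering number and the per-ball success probability assemble into precisely the stated bound; this is where Assumption~\ref{assum:error bound}, Lemma~\ref{lem:quadbehavior}, and the geodesic/Euclidean comparison in Equation~\eqref{eq:d_M} are needed, both to certify that radius-$\eps$ balls behave Euclideanly and to pass from geodesic to Euclidean $\eps$-density (recall $\rho\le\rho_G$). A secondary subtlety is the feature matching in Step~2: one must know the $\mo$-sphere contributes a \emph{unique} persistent $H_{\mo}$ class so the matched feature is the one with death near $d_\M$, which is guaranteed by the curvature restriction on $\M$ and the persistence gap. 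If one prefers to avoid citing the stability theorem, the same two bounds follow directly: $\true{b}\le 2\eps$ by exhibiting from the $\eps$-net an explicit simplicial $\mo$-cycle in $\mathrm{VR}_{2\eps}(Y_{1:n})$ realizing the fundamental class of $\M\cong S^{\mo}$, and the death bound by perturbing the ideal filling simplices of $\M$ by at most $\eps$ per vertex (so each pairwise distance moves by at most $2\eps$), again using Lemma~\ref{lem:quadbehavior} to control the geodesic-to-Euclidean discrepancy.
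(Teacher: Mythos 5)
Your proposal is correct, and its probabilistic half is essentially identical to the paper's: both cover $\M$ by geodesic balls of radius $\sim\eps/2$, bound the covering number by $a_\M/(\sigma_k(\eps/6)^{\mo})$ via a packing/counting argument, lower-bound the per-ball mass using $\phi\ge c_\phi$ from Assumption~\ref{assumption:probability}, and apply a union bound (Boole's inequality) to conclude that every ball contains a sample point, i.e.\ the sample is $\eps$-dense in $\M$. Where you diverge is the deterministic step. The paper passes from $\eps$-density directly to $\true{b}\le 2\eps$ and $|\true{d}-d_\M|\le 2\eps$ with only a one-line justification (birth and death times are realized as pairwise distances among representative points, each of which moves by at most $\eps$ when snapped to a nearby sample), and then uses $2\eps<(d_\M-b_\M)/2$ only to certify $\true{d}>\true{b}$ so the feature is detectable. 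You instead route this step through the Gromov--Hausdorff stability theorem for Vietoris--Rips persistence plus a persistence-gap matching argument to identify which off-diagonal point of the sample diagram corresponds to the fundamental class of $\M\cong S^{\mo}$. Your route is more rigorous --- it makes explicit both why the bottleneck perturbation is $2\eps$ rather than $\eps$ and why the matched feature is the right one --- at the cost of importing a stability theorem the paper does not invoke; your fallback sketch (an explicit $\mo$-cycle in $\mathrm{VR}_{2\eps}(Y_{1:n})$ and perturbation of the filling simplices, controlled by Lemma~\ref{lem:quadbehavior} and the geodesic-to-Euclidean comparison) is closer in spirit to what the paper actually does. One cosmetic note: the paper's own proof derives the exponent $\exp(-n\sigma_k c_\phi(\eps/2)^{\mo})$ without the $a_\M$ normalization or the factor $2$ appearing in the statement of Proposition~\ref{prop:manifold}; your version, which keeps $a_\M$ in the per-ball mass, actually matches the stated bound more faithfully than the paper's proof does.
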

\end{sloppypar}

\begin{proof}
Note that both $\true{b}$ and $\true{d}$ are distances between two data points contributing to $\M$, where $\true{b}$ is a distance between a pair of local representative points. 
A concentration inequality is used to prove this proposition by arguing that the manifold is well represented by the sampling points. Consider a covering of $\varepsilon/2$-balls $B_{\M}(x,\eps/2)$ for all $x\in\M$. By the compactness of $\M$, there exists a finite subcover of $B_{\M}(x,\eps/2)$ that covers the manifold such that the number of subcovers is at most $\OO\left( a_\M/(\sigma_\mo(\eps/2)^\mo)\right)$ where $\sigma_\mo$ is the volume of a $\mo$-dimensional ball with a unit radius. We show that with a high probability for large enough $n$, there is at least one point falling into every covering of $\M$. In turn, the points in the covering allow for the estimation of the birth and death time of $\M$.

First, consider the $\eps/2$-ball covering $\{\M^S_i\}_{i=1, \dots, K_\M(\eps)}$ of $\M$ such that $\cup_{i=1}^{K_\M(\eps)} \M^S_i=\M$. The surface area of $\M^S_i$ is $\sigma_k(\eps/2)^{\mo}$ and the Euclidean diameter of $\M^S_i$ is no larger than $ \eps$ because the Euclidean distance is no larger than the geodesic distance. When sampling $y\sim\phi(\cdot)$ on $\M$, the probability of $y$ falling into $\M^S_i$ satisfies,
\begin{align}
    \prob(A_i)=\int_{y\in\M^S_i}\phi(y)\, \dy\ge \sigma_k c_\phi\left(\eps/2\right)^{\mo},
\end{align} 
where $A_i$ is the event that there exists some $y \in Y_{1:n}$ in $\M_i^S$.
With probability at least $ 1-(1-\sigma_k c_\phi(\eps/2)^{\mo})^{n}  $, there is at least one point in $\M^S_i$. For the complement events $A^\mathrm{C}_1,\dots, A^\mathrm{C}_{K_\M(\eps)}$, Boole's inequality shows that
\begin{align}
    \mathrm{Pr}(\cup_{i=1}^{K_\M(\eps)} A^\mathrm{C}_i)\le \sum_{i=1}^{K_\M(\eps)} \mathrm{Pr}(A^\mathrm{C}_i).
\end{align}
By applying Boole's inequality for a large $n$ and noticing the fact that $\log(1-x)\le -x$ for any $0<x<1$, with a probability of at least
\begin{equation}
    \begin{split}
        1-\frac{a_\M}{ \sigma_k (\frac{\eps}{6})^{\mo} }\left(1-\sigma_k c_\phi\left(\frac{\eps}{2}\right)^{\mo}\right)^{n} 
        &= 1-\frac{a_\M}{ \sigma_k (\frac{\eps}{6})^{\mo} } \exp\left(n\log\left(1-\sigma_k c_\phi\left(\frac{\eps}{2}\right)^{\mo}\right)\right)\\
        &\ge 1-\frac{a_\M}{ \sigma_k(\frac{\eps}{6})^{\mo} }\exp\left(-n\sigma_k c_\phi\left(\frac{\eps}{2}\right)^{\mo}\right),
    \end{split}
\end{equation}
there is at least one point falling in every $\M^S_i$ where we use the estimate of $K_\M(\eps)$ as $a_\M/(\sigma_k (\frac{\eps}{6})^\mo)$ due to counting theory; see Lemma 5.2 of \cite{vershynin2010introduction}. 
	
For arbitrary $x\in\M$, suppose $x$ is in some partition $\M_i^S$. Using the same argument as above, with high probability, there is at least one point $y_i$ of $Y_{1:n}$ in $\M_i^S$, which also implies that $\rho(x,y_i)\le\eps$. Conditioning on at least one data point from $\Yn$ in each $\M_i^S$, we have 
\begin{align}
|\true{d}- d_\M|\le 2\eps,\\
    |\true{b} - b_\M| = |\true{b} - 0| \le 2\eps.
\end{align}
	
	
	Finally, by using Equation~\eqref{eq:assum-eps}, we can see that 
\begin{equation}
\true{d}\ge d_\M-2\eps> b_\M+2\eps \ge\true{b}.
\end{equation}
Hence, with high probability for a large enough sample size $n$, the feature $\M$ is detectable with persistent homology.
\end{proof}

With high probability, if we sample enough data points from $\M$, we can recover the birth and death time of $\M$ using the full dataset. In the following, we assess the consistency of the DaC method to recover the feature associated with $\M$, given Proposition~\ref{prop:manifold}.

\section{Consistency of the DaC persistence diagram}\label{appen:DaC}
In this section, we study the consistency of the proposed DaC method, in which we focus on the merging of the sub-features to recover $\M$. We use the estimated persistence to assess if a feature is recovered (i.e., if estimated death time $>$ estimated birth time), rather than the values of the estimates. We study probabilistic guarantees of the divide step and merging step of DaC, and quantify the possible induced error.
Recall $\M$ is the $\mo$-dimensional feature to be recovered, as defined in Section~\ref{sec:proof:prel}. Assume $\M$ is divided into $m$ partitions, with boundary $\B$ forming the partition. Assume $\B$ is of zero curvature almost everywhere (i.e., a union of linear subspaces of dimension at least $\mo$ in $\R^D$).  
The partition boundaries of $\M^{(l)}$ with respect to $\B$ are denoted as $\mathcal{B}^{(l)}$. 
Since $\M$ is assumed to possess non-negative curvature, there are sections of boundary $\Bar{\mathcal B}^{(l)}\subseteq\mathcal B^{(l)}$ such that $\M^{(l)}\cup \Bar{\mathcal{B}}^{(l)}$ (i.e., the sub-feature with its boundary) is also a manifold with non-negative curvature almost everywhere. In particular, $\Bar{\mathcal B}^{(l)}$ are the parts of the boundary from $\B$ that contribute to forming the sub-feature.
Denote the death time of $\M^{(l)}\cup\Bar{\mathcal B}^{(l)}$ as $d_{\M^{(l)}\cup\bar{\B}^{(l)}}$. According to the assumption that the birth time of $\M$ is $0$, the birth time of $\M^{(l)}\cup \Bar{\mathcal B}^{(l)}$ is also $0$. The proposed DaC algorithm produces an estimated birth and death time of $(\Yn\cap\M^{(l)})\cup\Bar{\mathcal B}^{(l)}$ where we denote the birth and death time estimates as $\hb^l$ and $\hd^l$, respectively. Recall that $(\hb^l,\hd^l)$ is called the sub-region-$l$ potential diagram.

\subsection{Identifiability of potential features}
Recall that $\M^{(l)}\cup\Bar{\mathcal B}^{(l)}$ is the union of the part of the manifold in sub-region $l$ with its boundary, which forms a non-negative curvature manifold. The data point $y_i$ is sampled according to distribution $\phi$ on $\M^{(l)}$, which satisfies Assumption~\ref{data} and Equation~\eqref{eq:dmu1}. The proof begins with the recovery guarantees for individual sub-regions $l$, containing $\M^{(l)}\cup\Bar{\mathcal B}^{(l)}$. When the number of data points sampled within $\M^{(l)}$ is sufficiently large, with high probability $\M^{(l)}\cup\Bar{\mathcal B}^{(l)}$ can be recovered by $(\Yn\cap\M^{(l)})\cup\Bar{\mathcal B}^{(l)}$. This also implies that DaC recovers the birth and death time of $\M^{(l)}\cup\Bar{\mathcal B}^{(l)}$ with a small error. We quantify the error that is introduced when considering the merge step. In particular, given that most representative points can be recovered using DaC, the DaC-based estimates are close to the true estimates (when using the full dataset). 

Before presenting our main result, we need an assumption to ensure the error bound $\delta$ is not too large compared with the sub-features.
\begin{assumption}[Global Error bound]\label{assum:error bound for delta: individual M}
The bound on the error $\delta$ due to global properties of $\M$ is $0 < \delta < \min_{l\in[m]}\left\{\frac{d_{\M^{(l)}\cup\bar{\B}^{(l)}}-b_{\M^{(l)}\cup\bar{\B}^{(l)}} }{4}\right\}$.
\end{assumption}
Assumption~\ref{assum:error bound for delta: individual M} is used to guarantee that the error bound $\delta$ of DaC is small so that the individual $\M^{(l)}\cup\bar{\B}^{(l)}$ can be recovered. Under the assumption that the sub-regions are equal-sized hyperrectangles (and $\M$ is partitioned approximately equally into these sub-regions), we have $\min_{l\in [m]}\left\{\frac{d_{\M^{(l)}\cup\bar{\B}^{(l)}}-b_{\M^{(l)}\cup\bar{\B}^{(l)}} }{4}\right\}\sim m^{-\frac{1}{k}}$. This induces an upper bound assumption on the estimation error.

The following lemma shows that DaC recovers $\M^{(l)}\cup\Bar{\mathcal B}^{(l)}$ with high probability.
\begin{lemma}\label{lem:sub-features can be recovered}
	Given Assumptions~\ref{data}, \ref{assum:manifold angle}, \ref{assum:error bound} and \ref{assum:error bound for delta: individual M}, then for sampling over $\M^{(l)}$, with probability at least $ 1-\frac{a_{\M^{(l)}}}{\sigma_k(\frac{\eps}{6})^{\mo}}\exp\left(-\frac{\sigma_k nc_{\min}c_\phi(\frac{\eps}{2})^{\mo}}{ a_{\M^{(l)}} }\right)$, 
	\begin{align}\label{eq:b<delta<d_M<d}
	    \hb^l\le2\eps < d_{\M^{(l)}\cup \bar{\B}^{(l)}}-2\eps\le\hd^l,
	\end{align}
	for all $l$ where $\hb^l$ and $\hd^l$ are the DaC birth and death time estimates of $(\Yn\cap\M^{(l)})\cup \bar{\B}^{(l)}$.
\end{lemma}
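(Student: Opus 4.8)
The plan is to prove this as a sub-region analogue of Proposition~\ref{prop:manifold}, applied to the augmented manifold $\M^{(l)}\cup\bar{\B}^{(l)}$. By the construction introduced just before the lemma, $\M^{(l)}\cup\bar{\B}^{(l)}$ is itself a closed manifold of non-negative curvature almost everywhere, with birth time $b_{\M^{(l)}\cup\bar{\B}^{(l)}}=0$ (inherited from the assumption that $\M$ has birth time $0$) and death time $d_{\M^{(l)}\cup\bar{\B}^{(l)}}$. The crucial structural observation is that the supplemental boundary $\bar{\B}^{(l)}$ is deterministic and hence is present in the VR filtration regardless of the sampling; so it suffices to show that the random sample restricted to $\M^{(l)}$ forms a fine enough $\eps$-net of $\M^{(l)}$, and then that this net, together with the fixed boundary, yields an $\eps$-approximation of the full sub-feature.

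First I would cover $\M^{(l)}$ by geodesic balls $\{\M^S_i\}$ of radius $\eps/2$, of which there are at most $a_{\M^{(l)}}/(\sigma_k(\eps/6)^{\mo})$ by the same counting estimate (Lemma 5.2 of \cite{vershynin2010introduction}) used in Proposition~\ref{prop:manifold}. By Assumption~\ref{data}, partition $l$ contains at least $nc_{\min}$ sample points, and each point lying in $\M^{(l)}$ lands in a fixed ball $\M^S_i$ with probability bounded below using the density lower bound $c_\phi$ from Assumption~\ref{assumption:probability}, giving a per-point, per-ball probability of at least $\sigma_k c_\phi(\eps/2)^{\mo}/a_{\M^{(l)}}$. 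A Boole inequality over the covering balls, combined with $\log(1-x)\le -x$, then reproduces the argument of Proposition~\ref{prop:manifold} verbatim but with $n$ replaced by the sub-region sample size $nc_{\min}$ and $a_\M$ by $a_{\M^{(l)}}$, yielding the stated failure probability $\frac{a_{\M^{(l)}}}{\sigma_k(\eps/6)^{\mo}}\exp\big(-\sigma_k nc_{\min}c_\phi(\eps/2)^{\mo}/a_{\M^{(l)}}\big)$. On the complementary (good) event, every point of $\M^{(l)}$ lies within Euclidean distance $\eps$ of a sample point.

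Conditioned on this covering event, I would then transfer the conclusion of Proposition~\ref{prop:manifold} to the augmented sub-feature: since $b_{\M^{(l)}\cup\bar{\B}^{(l)}}=0$ we obtain $\hb^l\le 2\eps$, and since the samples approximate $\M^{(l)}$ to within $\eps$ (while $\bar{\B}^{(l)}$ is exact) we obtain $|\hd^l-d_{\M^{(l)}\cup\bar{\B}^{(l)}}|\le 2\eps$. The Global Error Bound (Assumption~\ref{assum:error bound for delta: individual M}), which forces $2\eps< (d_{\M^{(l)}\cup\bar{\B}^{(l)}}-b_{\M^{(l)}\cup\bar{\B}^{(l)}})/2$, chains these into $\hb^l\le 2\eps< d_{\M^{(l)}\cup\bar{\B}^{(l)}}-2\eps\le\hd^l$, which is exactly Equation~\eqref{eq:b<delta<d_M<d}. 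Finally, a union bound over $l=1,\ldots,m$ (at the cost of a factor $m$, absorbed into the exponentially small failure probability) upgrades the per-region statement to the simultaneous ``for all $l$'' claim.

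The step I expect to be the main obstacle is not the concentration estimate—which is essentially a rerun of Proposition~\ref{prop:manifold}—but verifying that an $\eps$-net of $\M^{(l)}$ together with the fixed boundary genuinely recovers the death time of $\M^{(l)}\cup\bar{\B}^{(l)}$, since here (unlike in Proposition~\ref{prop:manifold}) a boundary is present. This is precisely where Assumptions~\ref{assum:manifold angle} and \ref{assum:error bound} are needed: the Manifold--Boundary angle assumption guarantees that $\bar{\B}^{(l)}$ is not tangent to $\M$ and therefore does not mask sample points near $\M\cap\B$, so that no portion of the sub-feature near its boundary is lost; and the Local Error Bound controls the discrepancy between geodesic and Euclidean distances near the boundary (through Equation~\eqref{eq:d_M} and Lemma~\ref{lem:quadbehavior}), ensuring that the Euclidean $\eps$-approximation on $\M^{(l)}$ transfers faithfully to the pairwise distances that determine $d_{\M^{(l)}\cup\bar{\B}^{(l)}}$.
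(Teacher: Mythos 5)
Your proposal is correct and follows essentially the same route as the paper: the paper likewise proves this by applying Proposition~\ref{prop:manifold} to the non-negatively curved augmented manifold $\M^{(l)}\cup\bar{\B}^{(l)}$, with the sample size replaced by $nc_{\min}$ (Assumption~\ref{data}), the surface area by $a_{\M^{(l)}}$, and the deterministic boundary $\bar{\B}^{(l)}$ treated as supplemental data points present in the filtration regardless of sampling. Your version simply unpacks the covering/Boole argument of Proposition~\ref{prop:manifold} explicitly rather than invoking it as a black box, and your closing remark about the role of Assumptions~\ref{assum:manifold angle} and \ref{assum:error bound} near the boundary is consistent with how the paper defers that issue to the subsequent masking-effect lemma.
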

\begin{proof}

    Since $\Bar{\mathcal B}^{(l)}\cup\M^{(l)}$ is a manifold with non-negative curvature, the user-selected $\eps$ satisfies Assumption~\ref{assum:error bound}, and the number of data points on $\M^{(l)}$ is sufficiently large according to Assumption~\ref{data}, we can apply Proposition~\ref{prop:manifold} to the sampling on $\Bar{\mathcal B}^{(l)}\cup\M^{(l)}$. Specifically, this follows since $\Yn\cap \M^{(l)}$ follows distribution $\phi(\cdot)$ sampled on $\M^{(l)}$, and the number of samples $n_{\M^{(l)}}$ is larger than $nc_{\min}$ according to Assumption~\ref{data}. $\bar{\B}^{(l)}$ is finitely path connected by an application of Sard's theorem \citep{lee2003introduction} because $\M^{(l)}$ is a smooth, connected, and compact manifold. 
    Therefore we may consider disconnected subspaces $\bar{\B}^{(l)}$ as supplemental data points merged with $\Yn\cap \M^{(l)}$ for Proposition~\ref{prop:manifold}. In other words, the $\bar{\B}^{(l)}$ adds a finite number of data points to the partition, and this lemma follows from Proposition~\ref{prop:manifold}.
\end{proof}

We remark that $b_{\M^{(l)}\cup \bar{\B}^{(l)}}$ is not always smaller than $d_{\M^{(l)}\cup \bar{\B}^{(l)}}$ for {\it any} partition $\bar{\B}^{(l)}$ of an {\it arbitrary} $\M$, but the assumption that $\M$ is  a compact manifold without boundary implies $0=b_{\M^{(l)}\cup \bar{\B}^{(l)}}<d_{\M^{(l)}\cup \bar{\B}^{(l)}}$; 
Figures~\ref{fig:c>0} and \ref{fig:negative curvatured} provide examples of a non-compact manifold with boundary points and a manifold with negative curvature, respectively. In the case of a positive birth time, a suitable partition is needed to satisfy the assumptions, such as the partition defined in Figure~\ref{fig:remedyc_ge0}. On the other hand, the positive curvature manifold with a birth time of $0$ does not require a special partition design.


\begin{figure}
\centering
\begin{subfigure}{0.32\textwidth} \centering
    \includegraphics[width=3.5cm]{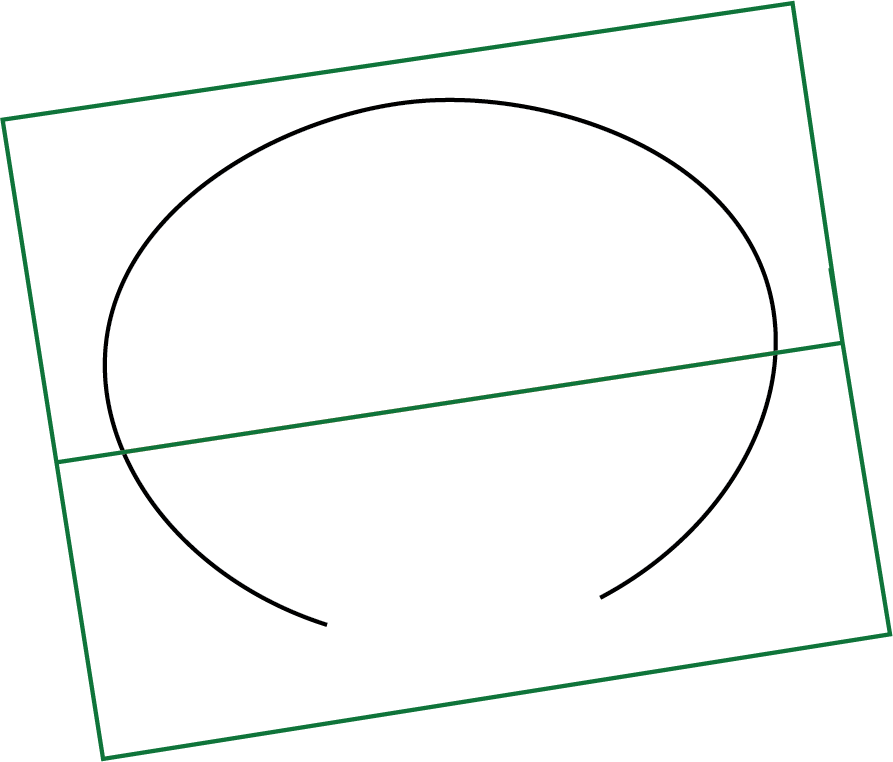}
    \put(-20,25){$\M_1$}
    \put(-97,58){$\M_2$}
    \put(-105,3){$\B_1$}
    \put(-44,83){$\B_2$}
   \caption{} \label{fig:c>0}
\end{subfigure}
\hfill
\begin{subfigure}{0.32\textwidth} \centering
    \includegraphics[width=3.5cm]{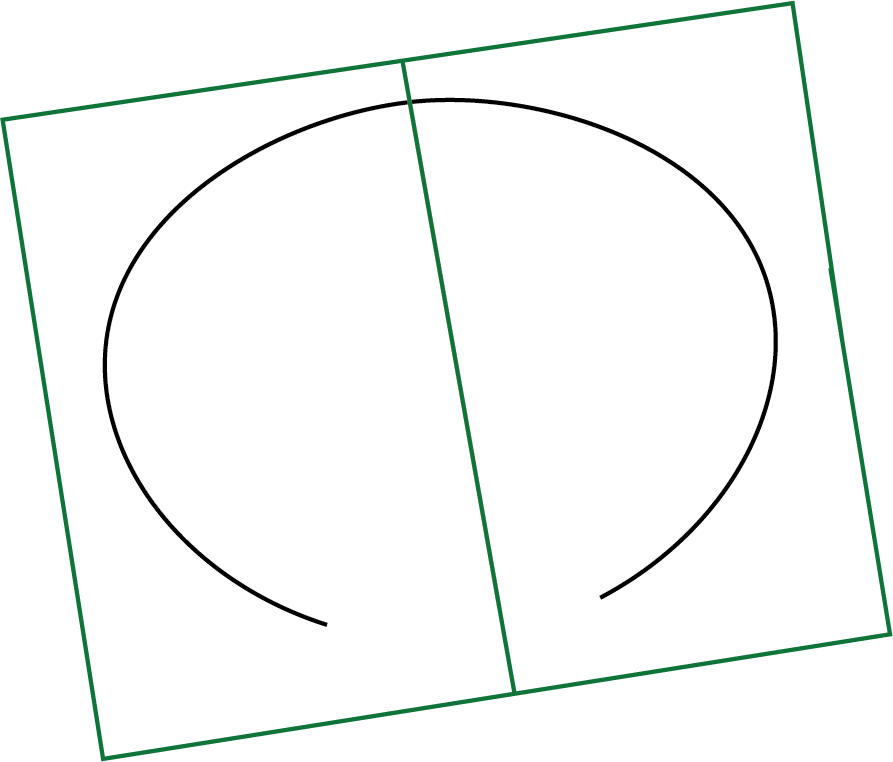}
    \put(-35,55){$\M_1$}
    \put(-80,30){$\M_2$}
    \put(-105,3){$\B_2$}
    \put(-5,65){$\B_1$}
   \caption{} \label{fig:remedyc_ge0}
\end{subfigure}
\hfill
\begin{subfigure}{0.32\textwidth} \centering
    \includegraphics[width=3.5cm]{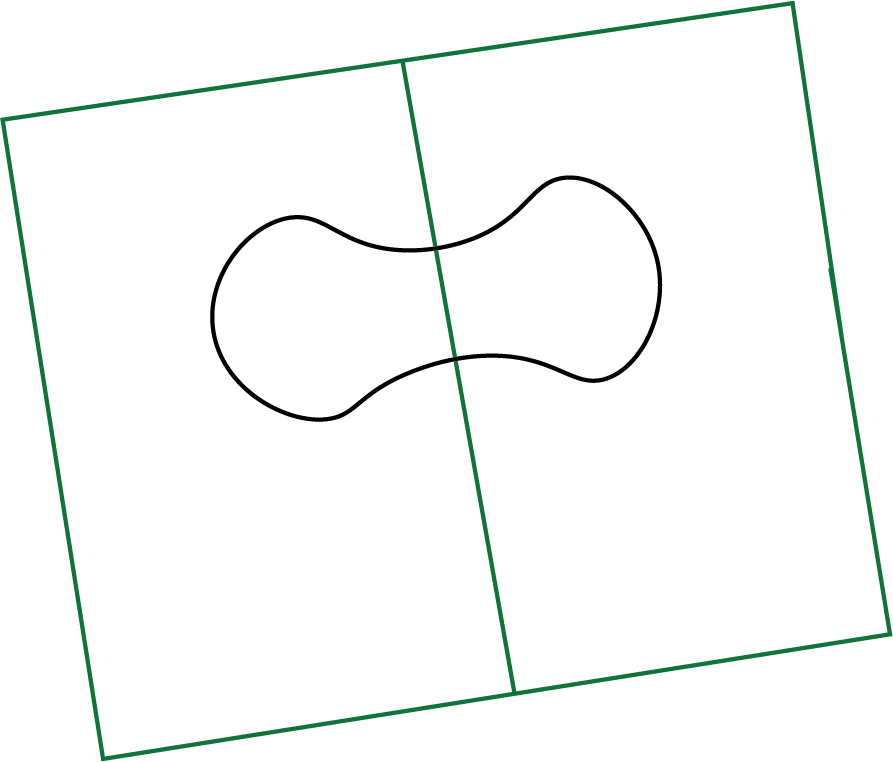}
    \put(-25,35){$\M_1$}
    \put(-90,30){$\M_2$}
    \put(-105,3){$\B_2$}
    \put(-5,65){$\B_1$}
   \caption{}
			\label{fig:negative curvatured}
\end{subfigure}
  \caption{The sub-space boundary $\B=\B_1\cup \B_2$ divides $\M$ into $\M_1$ and $\M_2$ within different rectangles.  (a) An example where $b_{\M}>0$, and $\M_1\cup\bar{\B}_1$ may not form a detectable loop because $\M$ is a non-compact manifold with boundary points. 
  One may consider a partition as in (b) to remedy this. (c) An example where $\M$ has negative curvature, which can be seen as two manifolds with birth times larger than $0$; negative curvature may induce ambiguity of the number of features.}
\label{fig:manifold_examples}
\end{figure}

\subsection{Masking effect}
Since $\B$ is a subspace rather than a single point, data in its proximity can be masked. In the following, we quantify the missing area of data points of DaC in each sub-region due to the \textit{masking effect} of $\B$, which refers to the possibility that points on $\M^{(l)}$ can be masked by $\B^{(l)} $ for $l\in [m]$. Assume only data points on $\M$ within the Euclidean distance $2\delta$ to $\B$ may be affected, for constant $\delta>0$. Namely, consider $y\in\M$ and $\Bar{y}$ to be the closest point to $y$ in $\B$, then $y$ may not be identified as a masked representative data point when $\dist(y,\B)=\rho(y,\Bar{y}) \le 2\delta\le \hat{b}_{(\Yn\cap\M^{(l)})\cup \B^{(l)}}$. To determine the masking range on $\M$, we need to go from the Euclidean distance range to the geodesic distance range on $\M$, which is quantified in Lemma \ref{lem:masked points on Mi}. Figure~\ref{fig:Masking_DEF} provides an illustration of the masking effect notation.
 \begin{figure}
		\centering
    \includegraphics[scale=0.8]{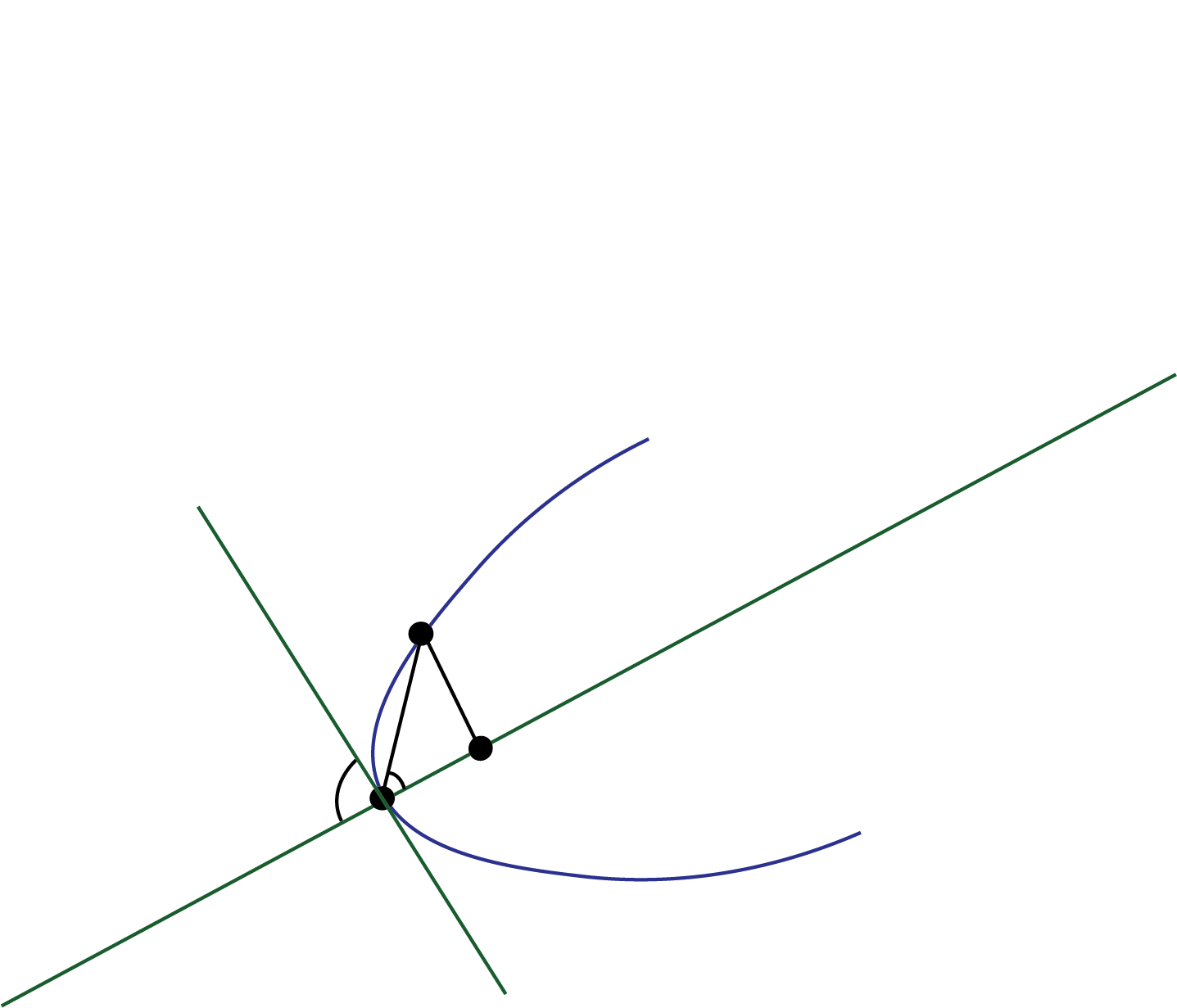}
    \put(-142,137){$\M^{(l)}$}
    \put(-60,128){$\B^{(l)}$}
    \put(-194,35){$x$}
    \put(-208,50){$\theta$}
    \put(-187,90){$y$}
    \put(-160,50){$\Bar{y}$}
    \put(-260,90){$\T_x \M^{(l)}$}
		\caption{An example of the Masking Effect of sub-region boundary $\B^{(l)}$ to $\M^{(l)}$. The point $y$ is locally masked by $\B^{(l)}$ and is not recovered as a representative point by DaC when $\rho(y, \bar{y}) \leq \hat{b}_{(\Yn\cap\M^{(l)})\cup \B^{(l)}}$. 
  Denote $\Bar{y}$ as the closest point of $y$ to $\B^{(l)}$. The possible range of masked points $y$ on $\M^{(l)}$ depends on both $\rho(y,\Bar{y})$ and the intersection angle, $\theta$ (where $|\theta -\frac{\pi}{2}|\le\beta$ according to Assumption \ref{assum:manifold angle}), between $\T_{x}\B^{(l)}$ and $\T_{x}\M^{(l)}$ where $x\in \M^{(l)}\cap\B^{(l)}$. }
		\label{fig:Masking_DEF}
	    \end{figure}
     
The following lemma quantifies the masking range.
\begin{lemma}[Masking Effect]\label{lem:masked points on Mi}
    Given Lemma~\ref{lem:sub-features can be recovered},  when $\delta\ll 1$, the representative points on $\M^{(l)}$ within a geodesic distance to $\B$ of at most $\frac{2\delta}{\cos\beta}+\frac{C\delta^2}{\cos^2\beta}$ are masked.
\end{lemma}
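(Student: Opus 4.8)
The plan is to turn the Euclidean masking condition $\dist(y,\B)\le 2\delta$ into a geodesic bound by parametrising $\M^{(l)}$ near the boundary by a geodesic that meets $\M\cap\B$ orthogonally, and then to invoke two first-order-plus-quadratic estimates: the angle assumption to control the leading term and the local quadratic behaviour of $\M$ to control the remainder. Fix a masked representative point $y\in\M^{(l)}$; by Lemma~\ref{lem:sub-features can be recovered} and the definition of masking we have $\dist(y,\B)=\rho(y,\bar y)\le \hb^l\le 2\delta$. Let $x\in\M\cap\B$ be the point of the intersection at minimal geodesic distance $g:=\rho_G(y,x)$ from $y$, and let $\gamma\colon[0,g]\to\M$ be the unit-speed minimizing geodesic with $\gamma(0)=x$ and $\gamma(g)=y$. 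By the first variation of arc length, the initial velocity $u:=\gamma'(0)\in\T_x\M$ is orthogonal to $\T_x(\M\cap\B)$, i.e.\ $u\in\T_x(\M\cap\B)^{\perp_\M}$; the assumption $\delta\ll1$ together with Assumption~\ref{assum:error bound} keeps $g$ within the injectivity radius so that $\gamma$ and the logarithm map are well defined.

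Next I would write the ambient chord as $y-x=g\,u+w$, where $w$ collects the deviation of $\M$ from its tangent line at $x$. Lemma~\ref{lem:quadbehavior} (the local quadratic graph property) gives $|w|\le C g^2$. Writing $\Pi_\B$ for the orthogonal projection onto the linear subspace parallel to $\B$ through $x$, the distance to the flat boundary is $\rho(y,\B)=|(\Id-\Pi_\B)(y-x)|$, so by the triangle inequality
\begin{equation*}
\rho(y,\B)\ \ge\ g\,\bigl|(\Id-\Pi_\B)u\bigr|-\bigl|(\Id-\Pi_\B)w\bigr|\ \ge\ g\,\bigl|(\Id-\Pi_\B)u\bigr|-C g^2 .
\end{equation*}
It then remains to bound $|(\Id-\Pi_\B)u|$ from below. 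Since $u\perp\T_x(\M\cap\B)$ and $\T_x(\M\cap\B)\subseteq\T_x\B$, the projection $\Pi_\B u$ lies in $\T_x(\M\cap\B)^{\perp_\B}$, so Assumption~\ref{assum:manifold angle} applies to the pair $(u,\Pi_\B u)$ and Equation~\eqref{eqn:AngleConstraint} forces $\angle(u,\Pi_\B u)\ge\frac{\pi}{2}-\beta$. Because $\angle(u,\B)=\angle(u,\Pi_\B u)=\arccos|\Pi_\B u|$ for the unit vector $u$, this yields $|(\Id-\Pi_\B)u|=\sin\angle(u,\B)\ge\cos\beta$, and hence $\rho(y,\B)\ge g\cos\beta-Cg^2$.

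Combining with the masking bound $\rho(y,\B)\le 2\delta$ gives $g\cos\beta\le 2\delta+Cg^2$, i.e.\ $g\le \frac{2\delta}{\cos\beta}+\frac{Cg^2}{\cos\beta}$. For $\delta\ll1$ the quadratic term is lower order, so $g\le \frac{2\delta}{\cos\beta}(1+o(1))$; substituting this first-order estimate back into $Cg^2/\cos\beta$ (a standard bootstrap) and absorbing the resulting constants into $C$ produces the claimed bound $g\le\frac{2\delta}{\cos\beta}+\frac{C\delta^2}{\cos^2\beta}$. I expect the main obstacle to be the second step: rigorously passing from the sup-over-complements form of Assumption~\ref{assum:manifold angle} to the pointwise estimate $|(\Id-\Pi_\B)u|\ge\cos\beta$ when $\M\cap\B$ has positive dimension (so the orthogonal complements are multi-dimensional), while simultaneously justifying that the minimizing geodesic meets $\M\cap\B$ orthogonally and remains in the chart where Lemma~\ref{lem:quadbehavior} holds. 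The degenerate case $\Pi_\B u=0$ (an exactly transversal crossing) should be dispatched separately, but there $|(\Id-\Pi_\B)u|=1\ge\cos\beta$ trivially, so it only strengthens the bound.
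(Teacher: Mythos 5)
Your strategy rests on the same two ingredients as the paper's proof: the angle constraint of Assumption~\ref{assum:manifold angle} supplies the leading factor $1/\cos\beta$ converting Euclidean distance to $\B$ into distance along $\M$, and Lemma~\ref{lem:quadbehavior} together with the geodesic--Euclidean comparison \eqref{eq:d_M} controls the quadratic remainder. The execution differs: the paper argues by contradiction, supposing $\rho(x,y)$ exceeds the threshold $T=\frac{2\delta}{\cos\beta}+\frac{C\delta^2}{\cos^2\beta}$, extracting by connectedness an intermediate point $z$ with $\rho(z,x)=T$ exactly, passing to its logarithmic image $z'\in\T_x\M$, and deriving $\rho(z,x)<T$; you instead decompose the chord $y-x=g\,u+w$ along a minimizing geodesic and bound $\dist(y,\B)$ from below directly. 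Your handling of the angle assumption is in fact cleaner than the paper's: the observation that $\Pi_\B u\in\T_x(\M\cap\B)^{\perp_\B}$ whenever $u\in\T_x(\M\cap\B)^{\perp_\M}$, whence $|(\Id-\Pi_\B)u|\ge\cos\beta$, makes explicit a step the paper uses only implicitly (its inequality $\rho(z',x)\le\dist(z',\B^{(l)})/\cos\beta$ needs exactly this, plus the fact that $z'-x$ has no component along $\T_x(\M\cap\B)$).

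There is, however, a genuine gap at the final step. The inequality you reach, $g\cos\beta\le 2\delta+Cg^2$, does not force $g$ to be small: it holds both on a branch $g\lesssim 2\delta/\cos\beta$ and on a branch $g\gtrsim \cos\beta/C$, and nothing in your argument excludes the latter. Worse, the estimate $|w|\le Cg^2$ and the first-variation orthogonality of $\gamma'(0)$ are only valid once $g$ is already below the scale $\min\{1,i_\M,S_\M^{-1/2}\}$, so the derivation presupposes part of its conclusion; a priori a point of $\M^{(l)}$ could be Euclidean-close to the affine hull of $\B$ while geodesically far from $\M\cap\B$. This is precisely what the paper's contradiction-with-intermediate-point device is built to handle: placing $z$ at distance exactly $T$ from $x$ guarantees all local estimates apply at $z$, and the resulting contradiction shows no point at distance $T$ can be within $2\delta$ of $\B$, so by connectedness no masked point lies beyond $T$. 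To repair your argument, apply the chord decomposition not to $y$ but to the first point along a path from $\M\cap\B$ to $y$ at distance exactly $T$ (equivalently, note that the admissible set of $g$ is disconnected and use continuity of $g$ along $\M^{(l)}$). The residual constant mismatch at the end ($C\delta^2/\cos^3\beta$ from the bootstrap versus the stated $C\delta^2/\cos^2\beta$) is harmless since $C$ is generic and $\beta$ is fixed.
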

\begin{proof}
   Due to the masking effect at the boundary, the only potential missing representative points from the merge step to the original data are data points close to the boundary. 
    Consider $y$ as a missing representative point on $\M^{(l)}$ due to the masking effect. According to the definition of the masking effect, 
    \begin{align}\label{eq:dist(y,Bl)}
        \dist(y,\B^{(l)})\le 2\delta.
    \end{align}
    Suppose $x$ is the closest point on $\M^{(l)}\cap\B^{(l)}$ from $y$. We discuss $\rho(x,y)$ in the following two cases.
    \begin{enumerate}
        \item Assume for contradiction that $\rho(x,y)> 2\delta/\cos\beta+C\delta^2/\cos^2\beta$.
    In this case, since $\M$ is connected, there must exist $z\in\M^{(l)}$ such that 
    \begin{align}\label{eq:rho(z,x)}
        \rho(z,x)=2\delta/\cos\beta + C\delta^2/\cos^2\beta \le R_\M,
    \end{align}
    according to Assumption \ref{assum:error bound}. Also, 
    \begin{align}\label{eq:dist(z,Bl)}
        \dist(z,\B^{(l)})<\dist(y,\B^{(l)})\le 2\delta,
    \end{align}
    which follows from Equation~\eqref{eq:dist(y,Bl)}. Denote the Riemannian logarithm of $z$ on $\T_x\M$ as $z'$; the existence of the Riemannian logarithm map of $z$ is guaranteed by Assumption \ref{assum:error bound}. Therefore, we have 
    \begin{align}\label{eq:rho and rho^2}
        \rho(z,z')\le C\rho^2(x,z')=C\rho^2_G(x,z)\le C\delta^2/\cos^2\beta,
    \end{align}
    where the first inequality follows from Lemma \ref{lem:quadbehavior} and the second inequality follows from Equations \eqref{eq:d_M} and \eqref{eq:rho(z,x)}. According to Equation~\eqref{eq:d_M}, we have
    \begin{align}
        \rho(x,z')= \rho_G(x,z)\le \rho(x,z)+C\rho^3(x,z)\le 2\delta/\cos\beta+C\delta^2/\cos^2\beta,
    \end{align}
    where we omit the small order term in the last inequality. According to Assumption \ref{assum:manifold angle}, we have 
    \begin{align}\label{eq:z'Bl}
        \dist(z',\B^{(l)})\le \dist(z,\B^{(l)})+\rho(z,z')< 2\delta +C\delta^2/\cos^2\beta,
    \end{align}
    where the last inequality follows from Equations~\eqref{eq:rho and rho^2} and \eqref{eq:dist(z,Bl)}. According to Assumption \ref{assum:error bound}, we know that
    \begin{equation}
        \begin{split}
            \rho(z,x)
            &\stackrel{\text{Triangle Inequality}}{\le}\rho(z,z')+\rho(z',x)\\
            &\stackrel{\text{Angle Constraint in Assumption \ref{assum:manifold angle}}}{\le}\rho(z,z')+\frac{\dist(z',\B^{(l)})}{\cos\beta}\\
            &\stackrel{\text{Equations \eqref{eq:rho and rho^2}, \eqref{eq:z'Bl}}}{<} \frac{C\delta^2}{\cos^2\beta} + \frac{2\delta}{\cos\beta},
        \end{split}
    \end{equation}
    which contradicts the assumption that $\rho(z,x)=\frac{2\delta}{\cos\beta}+\frac{C\delta^2}{\cos^2\beta}$. This means that we must not have $\rho(x,y)> \frac{2\delta}{\cos\beta}+\frac{C\delta^2}{\cos^2\beta}$.
    
    \item Otherwise, we have $\rho(x,y)\le 2\delta/\cos\beta+C\delta^2/\cos^2\beta$. The $x$ is the unique closest point to $y$ on $\M^{(l)}\cap \B^{(l)}$ provided that $\delta$ is smaller than $R_\M$. 
    Assume $y'$ is the logarithm map from $y$ to $\T_x\M^{(l)}$; the exponential map is well defined because of Assumption \ref{assum:error bound}. See an illustration in Figure~\ref{fig:Masking}.
\begin{figure}
		\centering
    \includegraphics[scale=0.8]{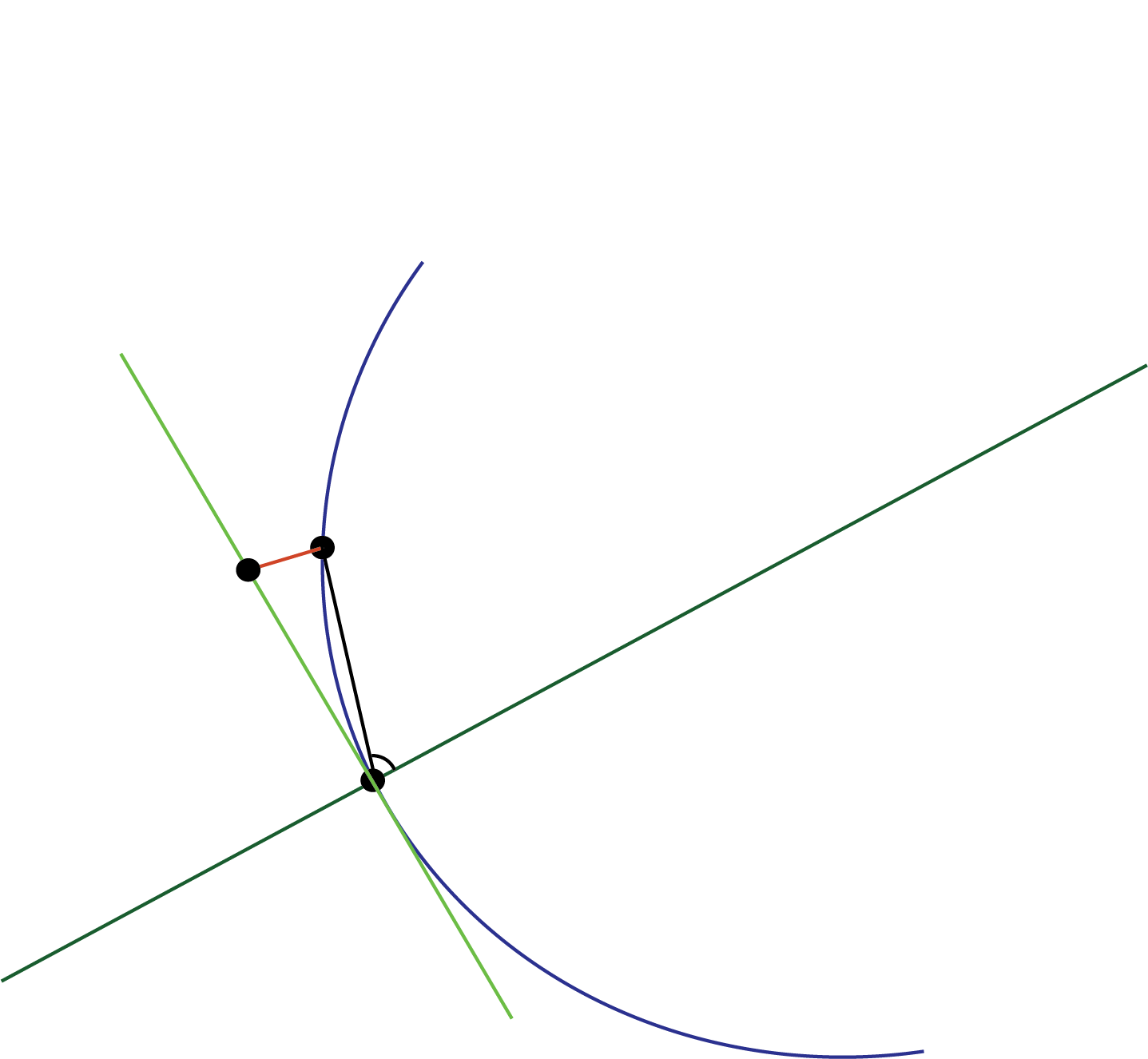}
    \put(-170,170){$\M^{(l)}$}
    \put(-60,150){$\B^{(l)}$}
    \put(-233,108){$y'$}
    \put(-280,150){$\T_x \M^{(l)}$}
    \put(-195,55){$x$}
    \put(-187,125){$y$}
		\caption{Proof illustration of Lemma~\ref{lem:masked points on Mi}.}
		\label{fig:Masking}
	    \end{figure}

From Assumption~\ref{assum:error bound}, we can see that $\rho(x,y)\le 2\delta/\cos\beta+C\delta^2/\cos^2\beta \le R_\M$. This guarantees that the masking area on $\M^{(l)}$ due to $\B^{(l
)}$ can include points of geodesic distance to $\B^{(l)}$ at most 
$2\delta/\cos\beta+C \delta^2/\cos^2\beta$
 because of Equation~\eqref{eq:d_M}. 
\end{enumerate}
\end{proof}

Lemma~\ref{lem:masked points on Mi} guarantees the masking area on $\M$ is small. Under the Assumptions~\ref{data}, we see that there are enough points within individual sub-regions, which implies that there are enough data points to estimate the birth and death time of $\M^{(l)}\cup\bar{\B}_l$.  This, in turn, guarantees the upper bound $2\delta$ of the masking effect, $\hat{b}^l$, is small. 


\subsection{Feature recovery rates for merge methods}\label{sec-app:merge method}
Given that each sub-feature can be recovered with high probability, we establish theoretical guarantees for the two merge-step methods, the Projected Merge method and the Representative Rips Merge method. In particular, we provide error-bound control for both the Projected Merge method (for when $m$ is small) and the Representative Rips Merge method (for when $m$ is large).

First, for the Projected Merge method, recall that the algorithm can cancel a set of topological projections if, under the module-2 sum, the projection error is smaller than some threshold. We show that if we set the threshold to a value related to $\delta$, the Projected Merge method is guaranteed to recover the potential-features.
\begin{lemma}[Projected Merge method]\label{lem:Projected Cancellation Method}
    Conditioned on Equation \eqref{eq:b<delta<d_M<d} in Lemma~\ref{lem:sub-features can be recovered} and given Assumptions~\ref{assum:manifold angle}, \ref{assum:error bound} and \ref{assum:error bound for delta: individual M}, when the allowed projection error to cancel the projection is set to $4\delta+C\delta^2/\cos\beta$, then with probability at least $ 1-\sum_{l=1}^m \frac{a_{\M^{(l)}}}{\sigma_k(\frac{\delta}{6})^{\mo}}\exp\left(-\frac{ n \sigma_k c_{\min} c_\phi(\frac{\delta}{2})^{\mo}}{ a_{\M^{(l)}} }\right) $, the Projected Merge method recovers the feature.
\end{lemma}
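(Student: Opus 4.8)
The plan is to separate the probabilistic content, which is a single union bound, from the deterministic geometric content, which is where the constant $4\delta+C\delta^2/\cos\beta$ is earned. For the probabilistic part, observe that the stated failure probability
$\sum_{l=1}^m \frac{a_{\M^{(l)}}}{\sigma_k(\frac{\delta}{6})^{\mo}}\exp\!\left(-\frac{n\sigma_k c_{\min}c_\phi(\frac{\delta}{2})^{\mo}}{a_{\M^{(l)}}}\right)$
is exactly the sum over sub-regions of the per-region failure bounds of Lemma~\ref{lem:sub-features can be recovered} taken with $\eps=\delta$. So the first step is to apply that lemma to each $\M^{(l)}\cup\bar{\B}^{(l)}$ and take Boole's inequality to produce an event $E$, of probability at least the stated bound, on which Equation~\eqref{eq:b<delta<d_M<d} holds \emph{simultaneously} for every $l$: each sub-feature is recovered and each birth/masking estimate obeys $\hb^l\le 2\delta$. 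Since the Projected Merge method succeeds only when every constituent sub-feature is present, conditioning on $E$ (all sub-features exist) is precisely the required hypothesis, and everything afterward is deterministic.

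Next I would record the continuum cancellation identity that the method is designed to detect. Because $\M$ is a closed $k$-manifold isomorphic to a $\mo$-sphere, the true intersection $\M\cap\B$ meets each shared boundary face from exactly two adjacent sub-regions. In the noise-free limit the topological projection $\mathcal{P}^l_j$ coincides with this intersection on each shared face, so the module-$2$ sum $\sum_{(l,j)\in\mathcal{A}}\mathcal{P}^l_j$ vanishes: every interior $(\mo-1)$-face is counted twice and cancels, and closedness of $\M$ leaves no outer boundary. This fixes $\mathcal{A}$ as the correct index set whose projections ought to cancel.

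The geometric heart of the proof is then to bound the \emph{discrete} residual $\sum_{(l,j)\in\mathcal{A}}\mathcal{P}^l_j$ on $E$. By the definition of the masking effect, surviving representative points lie at Euclidean distance at most $2\delta$ from $\B$, so on each side of a shared face the discrete projection fails to reach the true intersection by a gap of Euclidean size at most $2\delta$; the two sub-regions flanking a given face contribute $2\delta+2\delta=4\delta$. The passage from this Euclidean gap to the projected measure on $\B$, together with the quadratic manifold-flattening estimate (Lemma~\ref{lem:quadbehavior}) and the angle control of Assumption~\ref{assum:manifold angle} — which is also what converts the geodesic masking band $\frac{2\delta}{\cos\beta}+\frac{C\delta^2}{\cos^2\beta}$ of Lemma~\ref{lem:masked points on Mi} into a boundary estimate — contributes the lower-order correction $C\delta^2/\cos\beta$. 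Hence the residual module-$2$ sum is at most $4\delta+C\delta^2/\cos\beta$, so setting the cancellation threshold to exactly this value triggers the merge for $\mathcal{A}$ and recovers the feature; combined with $\P(E)$ this yields the claim.

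The main obstacle I anticipate is this third step: carefully translating the masking bands, which are stated as distances on $\M$, into a bound on the symmetric difference of the two projections measured on the flat boundary $\B$, and confirming that the two flanking Euclidean bands and the quadratic correction assemble into the precise constant $4\delta+C\delta^2/\cos\beta$ rather than, say, $\tfrac{4\delta}{\cos\beta}$. A secondary subtlety is excluding spurious cancellations — that is, checking the threshold is not so large that projections from sub-features belonging to different true features (or to none) also cancel — which should follow from the convexity and separation of $\M$ but must be verified so that the recovery is exact and not merely sufficient.
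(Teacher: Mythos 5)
Your proposal is correct and follows essentially the same route as the paper's (much terser) proof: a union bound over the $m$ sub-regions via Lemma~\ref{lem:sub-features can be recovered} with $\eps=\delta$, then the masking-effect bound of Lemma~\ref{lem:masked points on Mi} applied to the two sub-regions flanking each shared face, with the observation that orthogonal projection onto the subspace $\B$ is a contraction supplying exactly the resolution you flag for why the threshold is $4\delta+C\delta^2/\cos\beta$ rather than $4\delta/\cos\beta$. The paper does not spell out the module-$2$ cancellation identity or the exclusion of spurious cancellations, so your treatment is, if anything, more complete on those points.
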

\begin{proof}
    This lemma directly follows from Lemma~\ref{lem:masked points on Mi} applied to $(\Yn\cap\M^{(l)})\cup \B^{(l)}$. Since the Euclidean distance range of the masked region is no larger than $2\delta/\cos\beta+C\delta^2/\cos^2\beta$, the projection mismatching error of data points in two contiguous sub-regions $l_1$ and $l_2$ with $Y_{1:n}\cap\M^{(l_1)}$ and $Y_{1:n}\cap\M^{(l_2)}$ is at most $4\delta+C\delta^2/\cos\beta$. The error induced by the missing data points is projected onto the shared boundary $\B$. Notice that the projected Euclidean distance to a subspace is smaller than the  Euclidean distance between the (unprojected) data points. This is the allowed projection error.
\end{proof}

The Projected Merge method is suggested for use when $m$ is small. The cancellation is also computationally cheap when $m$ is small. 
In practice, when $m$ is large, the Projected Merge method does not tolerate missing sub-regions (i.e., sub-regions that should have sub-features, but are not detected), so an alternative approach is proposed.
When $m$ is large,  the Representative Rips Merge method is suggested because it can handle the setting with some unrecovered sub-regions. However, the Representative Rips Merge method requires $m$ to be large enough to recover the feature.  For example, when searching for an $H_1$ feature in $\R^2$, $m$ must be at least four. 

Since the Representative Rips Merge method considers each sub-feature as a single point in the distance matrix, 
if a sub-region is missing (i.e., it should have a sub-feature, but it is not detected), its Euclidean diameter can be used to quantify its contribution to the error on $d_\M$ and $b_\M$ when part of $\M$ is not recovered. 
One missing sub-region would at most decrease $d_\M$ by $2C/(m^{1/\mo})$ and increase $b_\M$ by $2C/(m^{1/\mo})$, and introduce a similar effect to the DaC birth and death time estimates, $\hat{b}$ and $\hat{d}$. 
The following assumption provides an upper-bound on the number of missing sub-regions for the Representative Rips Merge method so that $\M$ is still recoverable.
\begin{assumption}[Representative Rips Merge method]\label{assum:RRM}
Suppose there are at most $r$ unrecovered sub-regions, then it must hold that $\frac{\delta}{\cos\beta}+C\frac{\delta^2}{\cos^2\beta}+C(\frac{r}{m})^{\frac{1}{\mo}} < \frac{d_\M-b_\M}{4}$, where $C$ depends on $\phi$ and the intrinsic properties of $\M$.
\end{assumption}
This assumption guarantees the global formation of $\M$ identified on the resulting persistence diagram, even if the local information of some sub-regions is missing. Recall that the number of unrecoverable sub-features can be quantified by considering the discussion below Proposition~\ref{prop:multinomial}.


\begin{lemma}[Representative Rips Merge method]\label{lem:merge-Representative Rips Merge method}
    Conditioning on Proposition \ref{prop:manifold}, and given Assumption \ref{assum:error bound} and \ref{assum:RRM}, the Representative Rips Merge method can recover the global feature $\M$.
\end{lemma}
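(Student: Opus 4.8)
The plan is to regard each recovered potential-feature as a single vertex in a new finite metric space, equipped with the merge-distance $\rho(\widetilde Y_{j,l},\widetilde Y_{j',l'})=\min_{i,i'}\rho(y^l_j(i),y^{l'}_{j'}(i'))$, and to show that the VR filtration built on these vertices produces a $\mo$-dimensional feature whose death time exceeds its birth time, thereby recovering $\M$. Since assessing recovery only requires establishing $\hd>\hb$ and not the exact values, I would phrase everything as two one-sided bounds: an upper bound on the birth time $\hb$ of the merged feature and a lower bound on its death time $\hd$, and then invoke Assumption~\ref{assum:RRM} to separate them.

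First I would apply Lemma~\ref{lem:sub-features can be recovered} to guarantee that, with the stated high probability, all but at most $r$ of the $m$ sub-regions yield a recovered sub-feature whose representative points are present; the number $r$ of unrecovered sub-regions is controlled by the multinomial argument below Proposition~\ref{prop:multinomial}. Next I would bound the merge-distances between neighboring recovered sub-features. By the masking bound of Lemma~\ref{lem:masked points on Mi}, every representative point lies within geodesic distance $\tfrac{2\delta}{\cos\beta}+\tfrac{C\delta^2}{\cos^2\beta}$ of $\B$, so for two adjacent recovered sub-regions the nearest pair of representative points straddling their shared boundary is separated by a Euclidean distance of order $\tfrac{\delta}{\cos\beta}+C\tfrac{\delta^2}{\cos^2\beta}$, using Assumption~\ref{assum:error bound} to pass between geodesic and Euclidean distance. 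Hence neighboring vertices are joined in the VR filtration at this small scale.

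I would then account for the at most $r$ missing sub-regions. A union of $r$ out of $m$ equal-sized pieces of $\M$ has diameter of order $C(r/m)^{1/\mo}$, so a gap created by missing sub-features enlarges the largest gap among the surviving vertices by at most $C(r/m)^{1/\mo}$. Combining this with the masking contribution caps the birth time of the merged feature, $\hb\lesssim \tfrac{\delta}{\cos\beta}+C\tfrac{\delta^2}{\cos^2\beta}+C(r/m)^{1/\mo}$. For the death time, the surviving vertices still sample $\M$ densely enough that the global $\mo$-dimensional cycle persists up to a scale close to $d_\M$, decreased by at most the same missing-region term, giving $\hd\ge d_\M-\bigl(\tfrac{\delta}{\cos\beta}+C\tfrac{\delta^2}{\cos^2\beta}+C(r/m)^{1/\mo}\bigr)$. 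Recalling $b_\M=0$ and invoking Assumption~\ref{assum:RRM}, both the birth bound and the death deficit are strictly below $\tfrac{d_\M-b_\M}{4}$, so $\hb<\tfrac{d_\M}{4}<\tfrac{3d_\M}{4}<\hd$, which proves $\M$ is recovered.

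The main obstacle is the step that links the VR filtration on the compressed vertex set to the homology of $\M$. Because each sub-feature is collapsed to a set of representative points and distances are taken as minima rather than center-to-center, and because up to $r$ vertices are absent, one cannot directly read off the homology; a nerve- or interleaving-type argument is needed to show that the surviving vertices, with the merge-distance, form a VR complex homotopy equivalent at the relevant scales to $\M$. Controlling the missing sub-regions so that the global cycle is not destroyed, which is precisely the role of Assumption~\ref{assum:RRM}, is the delicate part, whereas the masking and distance-comparison estimates are routine given Lemma~\ref{lem:masked points on Mi} and the earlier geometric bounds.
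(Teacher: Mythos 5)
Your proposal follows essentially the same route as the paper's proof: a covering argument showing every point of $\M$ lies within $\frac{\delta}{\cos\beta}+\frac{C\delta^2}{\cos^2\beta}$ of a recovered representative point, a diameter bound of order $C(r/m)^{1/\mo}$ for the contribution of the at most $r$ missing sub-regions, and an application of Assumption~\ref{assum:RRM} to conclude $\hd-\hb>0$ (the paper packages this as the single chain $\hd-\hb\ge d_\M-b_\M-4\bigl(C(r/m)^{1/\mo}+\frac{\delta}{\cos\beta}+\frac{C\delta^2}{\cos^2\beta}\bigr)>0$ rather than your two one-sided bounds, but the content is identical). Your closing caveat about the missing nerve/interleaving step linking the compressed-vertex VR complex to the homology of $\M$ is a fair observation, and the paper's own proof does not fill that gap either.
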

\begin{proof}
    Recall that $\M$ is separated into $m$ similar-sized sub-regions such that each sub-region's Euclidean diameter is $C/(m^{1/\mo})$, where $C$ depends on $\phi$ and intrinsic properties of $\M$. 
    For any $x\in \M$, suppose $x\in \M^{(l)}$ where $\M^{(l)}$ is a sub-region from the proposed DaC method. Conditioned on recoverability of the sub-features $\M^{(l)}$, there exists a $y_i\in\M^{(l)} \cap \Yn$ such that $\rho(y_i,x)\le \delta/\cos\beta+C\delta^2/\cos^2\beta$; this follows a similar argument as the $\delta$-ball covering considered in the proof of Proposition \ref{prop:manifold}. 
    
    On the other hand, if the number of unrecovered sub-regions is smaller than $\mathcal{O}(m\widedelta)$, then the birth and death time estimates' errors are smaller than $2\widedelta^{1/\mo}$, where $\widedelta$ is the proportion of missing sub-regions. This is because for any $x\in\M$, there is at least one sub-region $\M^{(l)}$ that can be recovered with a point $y_i\in\M^{(l)}\cap\Yn$ such that $\rho(y_i,x)\le \widedelta^{1/\mo}$. If $m\widedelta=r$, then the error bound $2\widedelta^{1/\mo}=2\left(r/m\right)^{1/\mo}$. 
    Since we construct the distance matrix among those sub-features, both the Euclidean diameter of the covering balls, $\delta/\cos\beta+C\delta^2/\cos^2\beta$, used in Proposition \ref{prop:manifold} and the Euclidean diameter of the missing sub-regions, $\OO\left(\left(r/m\right)^{1/\mo}\right)$, appear in the birth and death time estimates' errors. By using the triangle inequality, we have
    \begin{equation}\label{eq:d^r-b^r>0}
        \hat{d}-\hat{b} \ge d_\M-b_\M-4 \left(C\left(\frac{r}{m}\right)^{\frac{1}{\mo}}+\frac{\delta}{\cos\beta}+\frac{C\delta^2}{\cos^2\beta}\right) > 0,
    \end{equation}
    where the first inequality follows from each sub-region containing at least one representative point and the second inequality follows from Assumption \ref{assum:RRM}. Equation~\eqref{eq:d^r-b^r>0} guarantees that the representative feature is recovered because the persistence of the feature is greater than zero.
\end{proof}

\begin{remark}
    One advantage of the Representative Rips Merge method is that it allows for some unrecovered sub-regions, so the union bound in Lemma \ref{lem:sub-features can be recovered} may be relaxed.

    According to Equation~\eqref{eq:l1bound} it is not expected that every sub-region has enough data points to recover its corresponding sub-feature under Assumption~\ref{data-independent}. However, a bound on the number of unrecovered sub-regions is available. Per the discussion following Equation~\eqref{eq:l1bound}, with probability at least $1-\left(2^m-2\right) e^{-n  \wideeps^2 / 2}$, there are at most approximately $r=2 m\wideeps$ sub-regions that do not have enough points to recover a sub-feature. Therefore, Assumption~\ref{assum:RRM} can be modified as 
    \begin{align}
        \frac{\delta}{\cos\beta}+\frac{C\delta^2}{\cos^2\beta}+C \wideeps^{\frac{1}{\mo}}<\frac{d_\M-b_\M}{4},
    \end{align}
    and we can establish a similar result as Lemma~\ref{lem:merge-Representative Rips Merge method}. This allows the linearized computational complexity of the proposed DaC when $m=\Omega(n/\log n)$ because it removes the dependence of $m$ in the assumption. We reiterate that for the Representative Rips Merge method, $m$ has to be lower bounded by a constant depending on dimension $\mo$ so that the Rips filtration can identify a merged feature.

\end{remark}


\subsection{Consistency of DaC}
After the merge step successfully identifies which sub-regions are merged, we now derive theoretical guarantees for the DaC naive estimates, $\mbe$ and $\hd$, in the following theorem.

\begin{theorem}[Formal version of Theorem \ref{thm:informal}]\label{thm:bhat and b; dhat and d}
	Given Assumptions~\ref{data}, \ref{assum:manifold angle}, \ref{assum:error bound}, and \ref{assum:error bound for delta: individual M}, with probability at least $ 1-\sum_{l=1}^m \frac{a_{\M^{(l)}}}{\sigma_k(\frac{\delta}{6})^{\mo}}\exp\left(-\frac{\sigma_k n c_{\min} c_\phi(\frac{\delta}{2})^{\mo}}{ a_{\M^{(l)}} }\right)$,
	\begin{equation}\label{eq:estimate bound for b and d}
	b_\M \le \mbe \le b_\M+4\delta ; \quad |d_\M-\hd|\le\frac{4\delta}{\cos\beta}+\frac{C \delta^2}{\cos^2\beta},
	\end{equation} 
 and 
 \begin{align}\label{eq:btrue and dtrue}
      b_\M \le \true{b} \le b_\M+2\delta; \quad | d_\M-\true{d}|\le 2\delta,
 \end{align}
and 
\begin{align}\label{eq:btrue<hatb<btrue+delta;dhat-dtrue<delta}
    \true{b}\le \hat{b}\le \true{b}+4\delta; \quad |\hat{d}-\true{d}|\le \frac{4\delta}{\cos\beta}+\frac{C \delta^2}{\cos^2\beta},
\end{align}
where constant $C>1$ only depends on $\phi$ and intrinsic properties of $\M$, but does not depend on the data. 
\end{theorem}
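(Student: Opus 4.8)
The plan is to establish the three displayed bounds separately, in the order \eqref{eq:btrue and dtrue}, then \eqref{eq:estimate bound for b and d}, and finally \eqref{eq:btrue<hatb<btrue+delta;dhat-dtrue<delta}, exploiting throughout that we are in the single-feature regime where $b_\M=0$ and that the three point clouds in play (the full sample $\Yn$, the merged representative sample $\{y_i^C\}$, and the per-sub-region samples) all sample the same manifold $\M$. The probability statement comes from a union bound over the $m$ sub-regions, exactly as in Lemma~\ref{lem:sub-features can be recovered} with $\eps$ set equal to $\delta$. Conditioning on that high-probability event, every sub-feature $\M^{(l)}\cup\bar{\B}^{(l)}$ is recovered and, by Lemma~\ref{lem:merge-Representative Rips Merge method} (or Lemma~\ref{lem:Projected Cancellation Method}), correctly merged, so the global feature $\M$ is identified; the remaining work is purely quantitative control of the birth and death estimates.

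The bound \eqref{eq:btrue and dtrue} is immediate from Proposition~\ref{prop:manifold} applied with $\eps=\delta$: it gives $\true{b}\le 2\delta$ and $|d_\M-\true{d}|\le 2\delta$, and since $b_\M=0$ and birth times are nonnegative we also have $b_\M\le\true{b}$. For \eqref{eq:estimate bound for b and d} I would re-run the covering argument of Proposition~\ref{prop:manifold} on the merged representative sample $\{y_i^C\}$, now accounting for the masking band of Lemma~\ref{lem:masked points on Mi}: the recovered points $\delta$-cover $\M$ everywhere except within geodesic distance $\tfrac{2\delta}{\cos\beta}+\tfrac{C\delta^2}{\cos^2\beta}$ of $\B$. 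For the birth estimate, the lower bound $b_\M\le\hat{b}$ is trivial since $b_\M=0$; for the upper bound, each recovered sub-feature has birth time at most $2\delta$ (Lemma~\ref{lem:sub-features can be recovered}), and tracking the worst-case cost of stitching across adjacent sub-regions through the masked band yields $\hat{b}\le b_\M+4\delta$. For the death estimate, which is global, the extremal configuration of $\M$ realizing $d_\M$ is captured by $\{y_i^C\}$ up to a shift by the masking width on each side, so $|d_\M-\hat{d}|\le\tfrac{4\delta}{\cos\beta}+\tfrac{C\delta^2}{\cos^2\beta}$, the geodesic-to-Euclidean conversion being supplied by \eqref{eq:d_M}.

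Finally, \eqref{eq:btrue<hatb<btrue+delta;dhat-dtrue<delta} must be proved by comparing $\hat{b},\hat{d}$ against $\true{b},\true{d}$ directly, rather than chaining \eqref{eq:estimate bound for b and d} and \eqref{eq:btrue and dtrue} through $d_\M$, which would inflate the constants. The key structural fact is that, on the conditioning event, $\{y_i^C\}$ is the full representative sample with only the masked boundary band deleted, so $\{y_i^C\}\subseteq\Yn$ and the two clouds differ only within geodesic distance $\tfrac{2\delta}{\cos\beta}+\tfrac{C\delta^2}{\cos^2\beta}$ of $\B$. Monotonicity of the Vietoris--Rips filtration under this inclusion gives $\true{b}\le\hat{b}$, while the thinness of the deleted band gives $\hat{b}\le\true{b}+4\delta$; for the death time, the extremal pair defining $\true{d}$ in $\Yn$ is matched by a pair in $\{y_i^C\}$ up to the masking width, yielding $|\hat{d}-\true{d}|\le\tfrac{4\delta}{\cos\beta}+\tfrac{C\delta^2}{\cos^2\beta}$ without ever passing through $d_\M$.

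The main obstacle I anticipate is the control of the death-time estimates in \eqref{eq:estimate bound for b and d} and \eqref{eq:btrue<hatb<btrue+delta;dhat-dtrue<delta}. Unlike the birth time, the death time encodes global information about $\M$, so a local covering argument alone does not suffice: I must argue that deleting the thin masking band near $\B$ perturbs the extremal simplex realizing the death time by no more than the band width, and then convert that geodesic perturbation into the stated Euclidean bound using \eqref{eq:d_M} together with the local quadratic approximation of $\M$ from Lemma~\ref{lem:quadbehavior}. Making the direct cloud-to-cloud comparison rigorous, so that the constants match $\tfrac{4\delta}{\cos\beta}+\tfrac{C\delta^2}{\cos^2\beta}$ rather than the weaker triangle-inequality bound, is where the effort concentrates.
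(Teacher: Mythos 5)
Your proposal is correct and follows essentially the same route as the paper's proof: a union bound over the $m$ sub-regions via Lemma~\ref{lem:sub-features can be recovered}, Proposition~\ref{prop:manifold} with $\eps=\delta$ for \eqref{eq:btrue and dtrue}, a triangle inequality across adjacent sub-regions for $\hat{b}\le b_\M+4\delta$, the masking-width bound of Lemma~\ref{lem:masked points on Mi} applied to the two extremal points for the death time, and the subset relation between the recovered representatives and $\Yn$ for the direct comparison in \eqref{eq:btrue<hatb<btrue+delta;dhat-dtrue<delta}. No substantive differences from the paper's argument.
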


\begin{proof}
    Recall that $b_\M=0$, therefore Equation~\eqref{eq:btrue and dtrue} is a result of Proposition~\ref{prop:manifold}. In the following, we first prove Equation~\eqref{eq:estimate bound for b and d}, followed by Equation~\eqref{eq:btrue<hatb<btrue+delta;dhat-dtrue<delta}.
\begin{enumerate}
    \item \textbf{(Birth time)} $\hat{b}\ge b_\M$ because $b_\M = 0$. On the other hand, the birth time estimate $\hb$ can either be in a single sub-region or across two nearby sub-regions. By applying Boole's inequality to $\M^{(l)}$ as in Lemma~\ref{lem:sub-features can be recovered}, with probability at least $ 1-\sum_{l=1}^m \frac{a_{\M^{(l)}}}{\sigma_k(\frac{\delta}{6})^{\mo}}\exp\left(-\frac{\sigma_k nc_{\min}c_\phi(\frac{\delta}{2})^{\mo}}{a_{\M^{(l)}} }\right) $, Equation~\eqref{eq:b<delta<d_M<d} holds for any $l\in[m]$. Applying the triangle inequality, 
    \begin{align}
        \hb\le \hb^i+\hb^j\le b_\M+4\delta,
    \end{align}
    where $\hb^i$ and $\hb^j$ are the birth estimates for sub-regions $i$ and $j$ that determines the birth time, $\hb$. The inequality of birth time in Equation~\eqref{eq:estimate bound for b and d} follows from Equation~\eqref{eq:b<delta<d_M<d}, as we find an $\delta$-covering in every $\M^{(l)}$ such that there exists at least one $y_j\in \Yn$ in every $\delta$-ball.
    \item  \textbf{(Death time)} While the birth time considers the Euclidean distance between nearby points, the estimate of death time considers the Euclidean distance between points globally. We quantify the range of missing data as in Lemma~\ref{lem:masked points on Mi}. First, we consider a union bound on Lemma~\ref{lem:sub-features can be recovered} for any $\M^{(l)}$. Given any two points $y_i^\M$ and $y_j^\M$ from $\M$, with their closest unmasked data points from $\Yn$ denoted $y_i$ and $y_j$, respectively, then the following holds:
    \begin{equation}
        \begin{split}
            \rho(y_i,y_j)
        &\le \rho(y_i,y_i^\M)+\rho(y_i^\M,y_j^\M)+\rho(y_j^\M,y_j)\\
        &\le \rho(y_i^\M,y_j^\M)+\frac{4\delta}{\cos\beta}+\frac{C \delta^2}{\cos^2\beta},
        \end{split}
    \end{equation}
    where the last inequality is due to the masking effect on $\B$ being at most $\frac{2\delta}{\cos\beta}+\frac{C \delta^2}{\cos^2\beta}$. This guarantees that $|\hat{d}-d_\M|\le \frac{4\delta}{\cos\beta} + \frac{C \delta^2}{\cos^2\beta}$.
\end{enumerate}

    Finally, we prove Equation~\eqref{eq:btrue<hatb<btrue+delta;dhat-dtrue<delta}. Notice $\hb$ is lower bounded by $\true{b}$ because the recovered data is a subset of $Y_{1:n}$. For $\hat{b}\le \true{b}+4 \delta$, we follow the same proof as for the birth time in Equation~\eqref{eq:estimate bound for b and d} by applying the union bound of Lemma~\ref{lem:sub-features can be recovered} on $\M^{(l)}$ for arbitrary $l$. The $|\hd-\true{d}|$ bound follows similarly.
\end{proof}

This result immediately guarantees an upper bound on the $l_\infty$ distance as follows,
\begin{align}
    \left\|\binom{\hb}{\hd}-\binom{\true{b}}{\true{d}}\right\|_\infty \le \frac{4\delta}{\cos\beta}+\frac{C \delta^2}{\cos^2\beta},
\end{align}
which is generalized to the bottleneck distance in the next section when considering multiple manifolds.

\begin{remark}
    Theorem~\ref{thm:bhat and b; dhat and d} guarantees with high probability that there are enough representative points recovered using the proposed DaC method to estimate the birth and death times of the feature when $m_{\min} \le m\le n/\log n $, where $m_{\min}$ is a constant that depends on $\M$.
    This lower bound of $m$ is to ensure there are enough sub-regions for the merge step of the Representative Rips Merge method to identify a feature. 
    Assuming the sub-regions are equal-sized and $c_{\min}\sim 1/m$, then $a_{\M^{(l)}}\sim 1/m$, and $\min_{l}\left(\frac{d_{\M^{(l)}\cup\bar{\B}^{(l)}}-b_{\M^{(l)}\cup\bar{\B}^{(l)}} }{4}\right)\sim 1/(m^{1/\mo})$. The probability in Theorem~\ref{thm:bhat and b; dhat and d} reduces to $1-C\delta^{-\mo}\exp\left(-{c n  c_\phi\delta^{\mo}}\right)$. When $m\sim n/\log n$ and $\delta\sim \min_{l}\left(\frac{d_{\M^{(l)}\cup\bar{\B}^{(l)}}-b_{\M^{(l)}\cup\bar{\B}^{(l)}} }{4}\right)\sim 1/(m^{1/k})$, then the probability reduces to $1-1/\log n\to 1$ when $n\to\infty$. This shows that the proposed DaC allows linearized computational complexity and memory cost.
\end{remark}

\begin{remark}
The angle assumption, $ \cos\beta $, affects the bound of Lemma~\ref{lem:masked points on Mi}. If $ \beta $ is close to $\pi/2$, the resulting proximity of $\B$ and $\M$ may result in points being masked due to the boundary. Consequently, the local topological information is distorted, which affects the death time estimate.
One potential approach to improve the union bound on $\beta$ is to adapt the orientation of the supplemental boundary so it is approximately orthogonal to $\M$ (e.g., using local PCA on the data points near $\B\cap\M$ to find the orthogonal direction and compute the distance matrix). This could make $\beta\approx 0$. On the other hand, we observe from empirical experiments that $\beta$ does not significantly affect the DaC birth and death time estimates, even when $m$ is very large. 

\end{remark}


\section{Multiple manifolds} \label{sec:multiple}
In this section, we discuss the extension of the result from Theorem~\ref{thm:bhat and b; dhat and d} to multiple sufficiently separated features. Let $Y_{1:n} = \{y_i\}_{i=1}^n \in \R^{D}$ be data sampled independently from distribution $\mu$ supported on a set $\M$ of the form
\begin{align}
    \M= \M_1\cup\M_2\cup\dots\cup \M_K,
\end{align}
where each $\M_l$, $l=1,\ldots, K$, is a smooth, compact, connected manifold without boundary with positive curvature almost everywhere and is isomorphic to a $\mo$-sphere.  To avoid superfluous features, the $\M_l$'s must be spaced so that the following holds, 
\begin{align}
    \min_{l\not= l'}\dist(\M_l,\M_{l'})\ge \kappa,
\end{align}
where $\kappa>0$ represents the minimum distance among different manifolds. The distribution $\mu$ is assumed to be a $K$-component mixture model taking the form
	\begin{align}
	    d \mu= w_1\phi_1 d\vol_{\M_1}   + \dots +w_K \phi_K d \vol_{\M_K},
	\end{align}
	for smooth density functions $\phi_l: \M_l \rightarrow \R$ and positive weights $w_l$ such that $\sum_{l=1}^Kw_l = 1$.  We assume each $\phi_l$ satisfies Assumption \ref{assumption:probability}.

We denote $b^l_{\mathrm{True}}$ and $d^l_{\mathrm{True}}$ as the birth and death times using the full data on $\M_l$, and denote $b^l_\M$ and $d^l_\M$ to be the birth and death time of $\M_l$. Although the $\M_l$ are the features of the underlying manifold, extra features may exist when computing a persistent diagram using data with birth times larger than $\kappa$. 

Next we consider the two merge methods: the Projected Merge and Representative Merge methods. Different recovery results are guaranteed for each merge method. On the one hand, the Projected Merge method is not guaranteed to recover extra features in the data but only guaranteed to recover $\M_l$. Indeed, we can see the extra discrete features as geometric noise. On the other hand, the Representative Rips Merge method can recover the true features found using the full dataset. 

We follow the same proof procedure as in Section~\ref{appen:DaC}. We divide $\M$ into sub-regions $\M_l^{(i)}$, where $i=[m_l]$ and $m_l$ is the total number of sub-regions containing $\M_l$, such that $m\defeq \sum_{l=1}^K m_l$.

We assume $\hb^l$ and $\hd^l$ are the estimated DaC birth and death time corresponding to either $b^l_\M$ and $d^l_\M$ or $b^l_{\mathrm{True}}$ and $d^l_{\mathrm{True}}$ for feature $l$. This can also be viewed as an upper bound on the bottleneck distance between the DaC-based estimates and either $\M$ or the true estimates (using the full data).

Let $i_{\M_l}$, $S_{\M_l}$, and $R_{\M_l}$ be the injectivity radius, the upper bound of the absolute sectional curvature, and the reach, respectively, for $\M_l$.  Then
$i_\M = \min_l(i_{\M_l})$, $S_\M=\max_l(S_{\M_l})$, and $R_\M=\min_l(R_{\M_l})$. Before presenting our main result, we note extra conditions on $\delta$ as follows.

\begin{assumption}\label{assum:m and kappa}
The upper bound of the Euclidean diameter of the sub-regions ($C/(m^{1/\mo})$) satisfies
    \begin{equation}
        \frac{C}{m^{1/\mo}}< \kappa.
    \end{equation}
\end{assumption}
\noindent This assumption guarantees that the Euclidean diameter of the sub-region is small so that the sub-region cannot contain points from more than one feature $\M_l$.
\begin{assumption}\label{assum:delta for multimanifold}
The local error bound, $\delta>0$, satisfies
\begin{equation}
    0<\delta\le \min_{k,l}\left\{\frac{d_{\M^{(i)}_l\cup\bar{\B}^{(i)}_l}-b_{\M^{(i)}_l\cup\bar{\B}^{(i)}_l}}{4}\right\}.    
\end{equation}
\end{assumption}
\noindent This assumption resembles Assumption~\ref{assum:error bound for delta: individual M} but for the multi-manifold case.

\begin{theorem}[Projected Merge method]
    Given Assumptions~\ref{data}, \ref{assum:manifold angle}, \ref{assum:error bound}, \ref{assum:m and kappa}, and \ref{assum:delta for multimanifold} hold, with probability at least $ 1-\sum_{i}\sum_{l} \frac{a_{\M^{(i)}_l}}{c(\frac{\delta}{2})^{\mo}}\exp\left(-\frac{c n c_{\min} c_\phi(\frac{\delta}{2})^{\mo}}{ a_{\M^{(i)}_l} }\right) $, we have
	\begin{equation}
	   b_\M^l \le \mbe^l \le b_\M^l+4\delta ; \quad |d^l_\M-\hd^l|\le \frac{4\delta}{\cos\beta}+\frac{C \delta^2}{\cos^2\beta}.
	\end{equation} 
\end{theorem}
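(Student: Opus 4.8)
The plan is to reduce this multi-manifold statement to $K$ independent copies of the single-manifold analysis of Section~\ref{appen:DaC}, combining the per-sub-region failure probabilities by a union bound. The separation hypothesis is what makes this decoupling legitimate: by Assumption~\ref{assum:m and kappa} the Euclidean diameter of any sub-region is at most $C/m^{1/\mo}<\kappa\le\min_{l\neq l'}\dist(\M_l,\M_{l'})$, so no sub-region $\M^{(i)}_l$ can contain sampled points from two distinct manifolds. Conditioned on the component labels, $\Yn\cap\M_l$ is then an i.i.d. sample from $\phi_l$ on $\M_l$, and since each $\phi_l$ satisfies Assumption~\ref{assumption:probability}, together with Assumption~\ref{data} this guarantees enough points in every $\M^{(i)}_l$.

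First I would apply Lemma~\ref{lem:sub-features can be recovered} to each sub-region $\M^{(i)}_l\cup\bar{\B}^{(i)}_l$ with $\eps=\delta$, using Assumption~\ref{assum:delta for multimanifold} in place of Assumption~\ref{assum:error bound for delta: individual M}. This gives, on an event of failure probability at most $\frac{a_{\M^{(i)}_l}}{c(\delta/2)^{\mo}}\exp\!\left(-\frac{cnc_{\min}c_\phi(\delta/2)^{\mo}}{a_{\M^{(i)}_l}}\right)$, that the sub-feature is recovered with $\hat{b}^i_l\le 2\delta< d_{\M^{(i)}_l\cup\bar{\B}^{(i)}_l}-2\delta\le\hat{d}^i_l$. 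Taking Boole's inequality over the $m=\sum_l m_l$ sub-regions produces exactly the advertised double-sum failure probability.

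Next I would run the Projected Merge analysis (Lemma~\ref{lem:Projected Cancellation Method}) separately on each $\M_l$. The point that requires $\kappa$ is that the topological projections attached to sub-features of $\M_l$ lie on the boundary pieces cutting $\M_l$, which are at distance at least $\kappa$ from those of any $\M_{l'}$; hence the module-2 cancellation with threshold $4\delta+C\delta^2/\cos\beta$ can only combine potential-features within a single $\M_l$ and never across manifolds. On the success event this recovers each $\M_l$ as a distinct feature, and the birth/death bounds follow verbatim from Theorem~\ref{thm:bhat and b; dhat and d} applied to $\M_l$: the masking estimate of Lemma~\ref{lem:masked points on Mi} controls missing data within geodesic distance $2\delta/\cos\beta+C\delta^2/\cos^2\beta$ of $\B$, giving $b^l_\M\le\hat{b}^l\le b^l_\M+4\delta$ by the triangle inequality over the two adjacent sub-regions determining the birth, and $|d^l_\M-\hat{d}^l|\le 4\delta/\cos\beta+C\delta^2/\cos^2\beta$ by comparing global pairwise distances of unmasked points to their manifold counterparts.

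The hard part will be the merge step rather than the divide step. One must argue that the cancellation threshold $4\delta+C\delta^2/\cos\beta$ is simultaneously large enough to absorb the genuine same-manifold projection mismatch caused by masking, yet small enough---guaranteed by the gap $\kappa$ through Assumption~\ref{assum:m and kappa}---that it never cancels projections belonging to different manifolds or to the spurious ``extra'' features that arise when points of distinct $\M_l$ fall within the filtration scale. This is precisely why the statement compares $\hat{b}^l,\hat{d}^l$ to the intrinsic $b^l_\M,d^l_\M$ rather than to the full-data $b^l_{\mathrm{True}},d^l_{\mathrm{True}}$: the Projected Merge method only promises recovery of the $K$ intrinsic features, treating the extra features as geometric noise.
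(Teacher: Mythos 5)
Your proposal is correct and follows essentially the same route as the paper: the paper's own (very brief) proof likewise reduces to the single-manifold analysis of Theorem~\ref{thm:bhat and b; dhat and d} applied to each $\M_l$, invoking Assumption~\ref{assum:m and kappa} to ensure that no sub-region, and hence no merge, mixes distinct manifolds, with the union bound over all sub-regions yielding the stated double-sum probability. Your added discussion of why the cancellation threshold cannot combine projections across manifolds is a detail the paper leaves implicit, but it does not change the argument's structure.
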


Proposition~\ref{prop:manifold} and Lemmas \ref{lem:sub-features can be recovered}, \ref{lem:masked points on Mi}, and \ref{lem:Projected Cancellation Method} follow from the assumptions. The only difference between this result and Theorem~\ref{thm:bhat and b; dhat and d} is that this result does not merge the sub-region from different manifolds because of Assumption~\ref{assum:m and kappa}. For the individual $\M_l$, the bound for the estimates directly follows from the proof in Theorem~\ref{thm:bhat and b; dhat and d}.

The Representative Rips Merge method is not guaranteed to recover $\M_l$, but it may recover extra features due to the proximity of multiple manifolds; these extra features would also appear on the true persistence diagram (using the full dataset). This method requires an alternative condition to guarantee the recovery of the $l^\mathrm{th}$ true feature such that $b^l_{\mathrm{True}}\ge\kappa$.

\begin{assumption}\label{assum:delta for representative merge}
Suppose that there are $r$ unrecovered sub-regions, then $\delta$, $m$, and $r$ must be such that
    $C\left(\frac{r}{m}\right)^{\frac{1}{\mo}}+\frac{\delta}{\cos\beta}+C\frac{\delta^2}{\cos^2\beta}\le \min_l\left\{\frac{d^l_{\mathrm{True}}-b^l_{\mathrm{True}}}{4},c_{\mathrm{count}}\right\}$ where constant $0<c_{\mathrm{count}}<1$ depends on intrinsic properties of $\M$ to ensure that the number of sub-features is large enough so that the Representative Rips Merge method can detect a representative homology group generator, and constant $C>1$ depends on $\phi$ and intrinstic properties of $\M$.
\end{assumption}
This assumption can be seen as an analog of Assumption \ref{assum:RRM} generalized to the multi-manifold setting.  By using the same reasoning as for Lemma \ref{lem:merge-Representative Rips Merge method}, recoverable sub-features can be merged using the Representative Rips Merge method.

\begin{theorem}[Representative Rips method]\label{thm:Multi-manifold:Representative Rips Method}
    Given Assumptions~\ref{data}, \ref{assum:manifold angle}, \ref{assum:error bound}, \ref{assum:m and kappa}, \ref{assum:delta for multimanifold}, and \ref{assum:delta for representative merge} hold, and the Representative Rips Merge method is used, when $n$ is large enough, with probability at least $ 1-\sum_{i}\sum_{l} \frac{a_{\M^{(i)}_l}}{c(\frac{\delta}{2})^{\mo}}\exp\left(-\frac{c n c_{\min} c_\phi(\frac{\delta}{2})^{\mo}}{ a_{\M^{(i)}_l} }\right)$, 
\begin{equation}
    |\true{b}^l- \hat{b}^l|\le 4\delta; \quad |\hat{d}^l-\true{d}^l|\le \frac{4\delta}{\cos\beta}+\frac{C \delta^2}{\cos^2\beta}.
\end{equation}
\end{theorem}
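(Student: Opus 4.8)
The plan is to reduce the multi-manifold statement to the single-manifold analysis of Theorem~\ref{thm:bhat and b; dhat and d} by exploiting the separation condition to guarantee that no sub-region straddles two distinct features. First I would invoke Assumption~\ref{assum:m and kappa}: since the Euclidean diameter of every sub-region is at most $C/m^{1/\mo}<\kappa$, and $\min_{l\neq l'}\dist(\M_l,\M_{l'})\ge\kappa$, each sub-region $\M^{(i)}_l$ intersects at most one manifold $\M_l$. Consequently, in the Representative Rips Merge step the distance between any two potential-features living on different manifolds exceeds $\kappa$, so the merge never links sub-features across distinct $\M_l$; the analysis therefore decouples into $K$ essentially independent single-manifold problems indexed by $l$.

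Next, for a fixed $l$ I would apply the machinery already developed for a single feature. Because $\phi_l$ satisfies Assumption~\ref{assumption:probability} on $\M_l$ and $\delta$ meets Assumption~\ref{assum:delta for multimanifold} (the multi-manifold analog of Assumption~\ref{assum:error bound for delta: individual M}), Lemma~\ref{lem:sub-features can be recovered} recovers each sub-feature $\M^{(i)}_l\cup\bar{\B}^{(i)}_l$ with high probability, and Lemma~\ref{lem:masked points on Mi} bounds the masked geodesic band near $\B$ by $\tfrac{2\delta}{\cos\beta}+\tfrac{C\delta^2}{\cos^2\beta}$. I would then rerun the argument of Lemma~\ref{lem:merge-Representative Rips Merge method} on $\M_l$: using the distance matrix among the recovered potential-features, I show that for every $x\in\M_l$ there is a recovered representative point within distance $\tfrac{\delta}{\cos\beta}+\tfrac{C\delta^2}{\cos^2\beta}$, and that at most $r$ unrecovered sub-regions each perturb the estimates of $b^l$ and $d^l$ by $\OO((r/m)^{1/\mo})$. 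Under Assumption~\ref{assum:delta for representative merge}, the inequality $C(r/m)^{1/\mo}+\tfrac{\delta}{\cos\beta}+C\tfrac{\delta^2}{\cos^2\beta}\le\min_l\{\tfrac{d^l_{\mathrm{True}}-b^l_{\mathrm{True}}}{4},c_{\mathrm{count}}\}$ guarantees $\hd^l-\hb^l>0$, i.e. the $l^{\mathrm{th}}$ true feature (including any extra feature with $b^l_{\mathrm{True}}\ge\kappa$) is recovered, with the $c_{\mathrm{count}}$ term ensuring enough sub-features survive for the Rips filtration to register a homology group generator.

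With recovery secured, the quantitative bounds $|\true{b}^l-\hat b^l|\le 4\delta$ and $|\hat d^l-\true{d}^l|\le\tfrac{4\delta}{\cos\beta}+\tfrac{C\delta^2}{\cos^2\beta}$ follow essentially verbatim from the proof of Equation~\eqref{eq:btrue<hatb<btrue+delta;dhat-dtrue<delta} applied on $\M_l$: the birth estimate is controlled by a $\delta$-covering of the two sub-regions determining $\hb^l$, and the death estimate inherits the masking error through the triangle inequality. Finally I would take a union bound of the per-sub-region failure events of Lemma~\ref{lem:sub-features can be recovered} over all $\M^{(i)}_l$, yielding the stated probability $1-\sum_{i}\sum_{l}\tfrac{a_{\M^{(i)}_l}}{c(\delta/2)^{\mo}}\exp(-c n c_{\min}c_\phi(\delta/2)^{\mo}/a_{\M^{(i)}_l})$. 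The main obstacle is the merge step rather than the estimation bounds: one must verify that, despite possibly unrecovered sub-regions and the presence of geometrically-induced extra features, the Representative Rips Merge method on the potential-feature distance matrix reproduces the same homology the full-data Rips filtration sees at the relevant scale — this is precisely where the separation (Assumption~\ref{assum:m and kappa}) and the counting constant $c_{\mathrm{count}}$ in Assumption~\ref{assum:delta for representative merge} do the essential work.
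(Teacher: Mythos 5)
Your proposal is correct and follows essentially the same route as the paper: the paper's proof is a one-line reduction stating that the argument of Theorem~\ref{thm:bhat and b; dhat and d} applies to each $\M_l$ separately because Lemmas~\ref{lem:sub-features can be recovered}, \ref{lem:masked points on Mi}, and \ref{lem:merge-Representative Rips Merge method} continue to hold, with Assumption~\ref{assum:m and kappa} ensuring no sub-region straddles two features. Your write-up simply makes explicit the decoupling, the per-sub-region union bound, and the role of $c_{\mathrm{count}}$, all of which are consistent with the paper's intended argument.
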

The proof is similar to the proof for Theorem~\ref{thm:bhat and b; dhat and d} applied to each $\M_l$, as Lemma \ref{lem:sub-features can be recovered}, \ref{lem:masked points on Mi}, and \ref{lem:merge-Representative Rips Merge method} hold.

\end{document}